\declaretheorem[name=Lemma, numberwithin = section]{lemma}
\declaretheorem[name=Theorem,sibling = lemma]{theorem}
\declaretheorem[name=Observation, sibling=lemma]{observation}
\declaretheorem[name=Definition, sibling=lemma]{definition}
\declaretheorem[name=Corollary, sibling=lemma]{corollary}
\crefname{claim}{Claim}{Claims}
\crefname{lemma}{Lemma}{Lemmas}
\crefname{theorem}{Theorem}{Theorems}
\crefname{proposition}{Proposition}{Propositions}
\crefname{question}{Question}{Questions}
\crefname{conjecture}{Conjecture}{Conjectures}
\crefname{figure}{Figure}{Figures}
\crefname{corollary}{Corollary}{Corollaries} 
\renewcommand{\epsilon}{\varepsilon}
\newcommand{\pcf}{{\rm pcf}}   
\newcommand{\Med}{{\rm Med}}   
\newcommand{\dom}{{\rm dom}}
\title{Peaceful Colourings}
\author{Chun-Hung Liu\footnote{Department of Mathematics, Texas A\&M University, USA. chliu@tamu.edu. Partially supported by NSF under award DMS-1954054 and CAREER award DMS-2144042.},
\quad Bruce Reed \footnote{Institute of Mathematics, Academia Sinica, Taiwan. bruce.al.reed@gmail.com.  Supported by  NSTC Grant 112-2115-M-001 -013 -MY3}}
\begin{document}

\maketitle

\begin{abstract}
   We introduce peaceful colourings, a variant of $h$-conflict free colourings.  We call a colouring with no monochromatic edges  $p$-peaceful if for each vertex $v$, there are at most $p$ neighbours  of $v$ coloured with a colour appearing on another neighbour of $v$. 
   An $h$-conflict-free colouring of a graph is a (vertex)-colouring with no monochromatic edges so that for every vertex $v$, the number of neighbours of $v$ 
   which are coloured with a colour appearing on no other neighbour of $v$ is at least the minimum of $h$ and the degree of $v$. 
   If $G$ is $\Delta$-regular then it has an $h$-conflict free colouring precisely if it has a $(\Delta-h)$-peaceful colouring. 
   We focus on the minimum $p_\Delta$ of those $p$  for which every graph of maximum degree $\Delta$ has a $p$-peaceful colouring with $\Delta+1$ colours.  
   We show that $p_\Delta > (1-\frac{1}{e}-o(1))\Delta$ and that for graphs of bounded codegree, $p_\Delta \leq  (1-\frac{1}{e}+o(1))\Delta$. 
   We ask if the latter result can be improved by dropping the bound on the codegree. 
   As a partial result, we show that $p_\Delta \leq \frac{8000}{8001}\Delta$ for sufficiently large $\Delta$. 
\end{abstract}

\section{Introduction}

Many graph invariants are defined as  the number of colours required to colour the vertices of a graph so that 
its edges are non-monochromatic and some other property holds with respect to the colours used on each neighbourhood. 
For example if we wish to colour the square of the graph we are requiring that every colour be used at most 
once in the neighbourhood. 
In an $b$-frugal colouring \cite{hmr,mr} we require that every colour be used at most $b$ times. 
It is also natural to bound the number of exceptional vertices which use a colour appearing more than $b$ times. 
In this vein, for  a real number $p$, we define a {\it $p$-peaceful colouring} to be a colouring $f$ with
no monochromatic edges in which  for every vertex $v$:  
$$|\{w \in N(v) : \exists u \in N(v)-\{w\} \text{ with } f(u)=f(w)\}| \le p.$$

Peaceful colourings are related to the more studied but perhaps less natural conflict-free colourings. 
A {\it proper conflict-free colouring} of a graph is a (vertex-)colouring with no monochromatic edges such that for every non-isolated vertex $v$, the neighbourhood $N(v)$ contains a vertex $w$ coloured with a colour not appearing on $N(v)-\{w\}$. 
This concept was recently introduced by Fabrici, Lu\v{z}ar, Rindo\v{s}ova and Sot\'{a}k \cite{flrs}. 
It has received a considerable amount of attention \cite{alo,accchhhkz,cps,cckp,cckp2,cl,h,kp,l,lr}.
Caro et al.\ \cite{cps} conjectured that if $\Delta \ge 3$ then every connected graph of maximum degree $\Delta$ has a proper conflict-free $(\Delta+1)$-colouring.
This conjecture remains open for $\Delta \geq 4$, while Liu and Yu \cite{ly} proved the case $\Delta=3$.
Liu and Reed \cite{lr} proved that $\Delta + O(\Delta^{2/3}\log\Delta)$ colours suffice for sufficiently large $\Delta$.

Cho, Choi, Kwon and Park introduced $h$-conflict-free proper colourings in \cite{cckp2}. 
For a real number $h$, a colouring of a graph  with no monochromatic edges is {\it $h$-conflict-free} if for every vertex $v$, $N(v)$ contains at least $\min\{\deg(v),h\}$ vertices coloured with a colour used only once in $N(v)$; we use $\chi^h_{\pcf}(G)$ to denote the fewest number of colours in such a colouring of $G$. 

Kamyczura and Przybylo proved the following in \cite{kp}:

\begin{theorem}[\cite{kp}]
    \label{kptheorem}
    There is a $\Delta_0$ such that if $G$ has maximum degree $\Delta  \ge \Delta_0$,
    and minimum degree $\delta \ge 1500\log\Delta$, then for any $h \le \frac{\delta}{75}$, $\chi_{\pcf}^h(G) \le \Delta(1+\max\{\frac{30h}{\delta}, \frac{600 \log \Delta}{\delta}\})$. 
\end{theorem}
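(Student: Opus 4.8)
The plan is a one‑pass random recolouring on top of a greedy proper colouring, verified by the Lov\'asz Local Lemma. Since $h\le\delta/75<\delta\le\deg(v)$ for every vertex $v$, we have $\min\{\deg(v),h\}=h$ throughout, so it suffices to produce a proper colouring in which every vertex has $h$ neighbours whose colour is used exactly once in its neighbourhood. Write $\varepsilon=\max\{30h/\delta,\,600\log\Delta/\delta\}$, set $k=\lfloor\Delta(1+\varepsilon)\rfloor$, and partition the palette as $C_0\sqcup C_1$ with $|C_0|=\Delta+1$ and $|C_1|=k-\Delta-1$; since $\varepsilon\Delta\ge 600\log\Delta$ is large this gives $|C_1|\ge\tfrac12\varepsilon\Delta$. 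First greedily colour $G$ properly with the colours of $C_0$, obtaining $\phi$.

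Now recolour: independently, each vertex $v$ with probability $q$ \emph{proposes} a uniformly random colour $\psi(v)\in C_1$, and $v$ \emph{adopts} $\psi(v)$ exactly if no neighbour of $v$ proposes the same colour, retaining $\phi(v)$ otherwise. Here $q$ is a suitable absolute‑constant multiple of $h/\delta$ when the first term of the maximum is the larger, and of $\log\Delta/\delta$ otherwise; the hypotheses on $h$ and $\delta$ make $q<1$. The resulting colouring $c$ is automatically proper — adjacent vertices coloured from $C_1$ adopted distinct proposals, adjacent vertices coloured from $C_0$ are separated by $\phi$, and $C_0\cap C_1=\emptyset$ — so the only thing to arrange is that the event $A_v$ ``$v$ has fewer than $h$ neighbours whose colour is unique in $N(v)$'' fails for every $v$.

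The core estimate is $\Pr[A_v]$. A neighbour $w$ of $v$ that proposes a colour proposed by no other neighbour of $v$ and by no neighbour of $w$ adopts that colour, and that colour is then unique in $N(v)$; bounding the number of such $w$ from below, its conditional expectation — given the set of proposing neighbours and the proposals outside $N(v)$, which are typical with overwhelming probability — is a constant fraction of the number $r$ of proposing neighbours of $v$, the constant being bounded away from $0$ because the choice of $\varepsilon$ keeps $q\Delta/|C_1|$ below an absolute constant $<1$ and hence colour‑collisions inside a neighbourhood to a bounded fraction. Since $r$ concentrates around $qd\ge q\delta$, this expectation exceeds $h$ by a definite margin: by a fixed factor when the first term dominates, and by a factor $\Omega(\log\Delta)$ in the other case. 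Revealing the proposing neighbours' colours one at a time turns the count into a martingale with $O(1)$ increments, so Azuma's inequality together with the concentration of $r$ gives $\Pr[A_v]\le e^{-\Omega(h)}$ in the first regime — which is $\le\Delta^{-\Omega(1)}$ precisely because that regime is exactly $h\ge 20\log\Delta$ — and $\Pr[A_v]\le\Delta^{-\Omega(1)}$ in the second, the constant $600$ (with $\delta\ge 1500\log\Delta$) being chosen so the exponent exceeds any fixed constant. Finally $A_v$ is determined by $\phi$ and by the proposals of the vertices within distance $2$ of $v$, hence is mutually independent of all $A_u$ with $\operatorname{dist}(u,v)>4$; as at most $\Delta^4$ vertices lie within distance $4$ of $v$, taking $\Delta_0$ large makes $e\,\Pr[A_v](\Delta^4+1)\le 1$, and the symmetric Lov\'asz Local Lemma yields a recolouring avoiding every $A_v$, i.e.\ a proper $h$‑conflict‑free colouring with $k\le\Delta(1+\max\{30h/\delta,\,600\log\Delta/\delta\})$ colours.

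The step I expect to fight hardest is the concentration of the private‑neighbour count. As a raw function of the independent proposals it is not Lipschitz with a usable constant — changing one proposal can flip the privacy of many neighbours simultaneously — so one must first replace it by the robust surrogate above, condition away the atypical behaviour of the proposals outside $N(v)$, and only then expose the relevant colours in an order that makes the martingale argument go through. At the same time one must balance $q$ against $|C_1|$ so that three things hold at once: the neighbourhood receives enough proposals ($q\delta$ a large multiple of $h$, and at least $\Omega(\log\Delta)$); colour‑collisions within a neighbourhood stay a bounded fraction ($q\Delta/|C_1|$ below an absolute constant $<1$); and the deviation bound is strong enough to survive multiplication by $\Delta^4$ in the local lemma. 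Threading these constraints is exactly what produces the two terms in the maximum and forces the hypotheses $\delta\ge 1500\log\Delta$ and $h\le\delta/75$.
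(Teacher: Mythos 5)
This statement is quoted from Kamyczura and Przyby\l{}o \cite{kp}; the paper states it without proof and only uses it to derive Corollary~\ref{kpcor}, so there is no in-paper argument to compare yours against. Judged on its own terms, your blueprint is the standard one for results of this type (a proper base colouring from a palette $C_0$ of size $\Delta+1$, one round of random recolouring from a disjoint reserve palette $C_1$ of size $\Theta(\varepsilon\Delta)$, a ``propose/adopt'' rule that preserves properness, and the Local Lemma over events indexed by vertices), and the bookkeeping you do is right: an adopted proposal unmatched in $N(v)$ is automatically unique in $N(v)$ because $C_0\cap C_1=\emptyset$ and $C_1$-colours are only acquired by proposing; the expected number of such neighbours is at least $q\deg(v)\,e^{-\Theta(q\Delta/|C_1|)}$, and since $q\Delta/|C_1|$ is an absolute constant in both regimes, optimizing the constant multiple in $q$ leaves a genuine multiplicative gap above $h$ (respectively above $20\log\Delta$), which is what the constants $30$, $600$, $75$, $1500$ are for.

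The one place the argument can actually die is the one you flag, and it is worth being precise about why: plain Azuma over all $\deg(v)$ proposal trials gives an exponent of order $t^2/\deg(v)\asymp q^2\deg(v)\asymp h^2\deg(v)/\delta^2$, which can be $o(\log\Delta)$ (e.g.\ $\delta=\Delta$, $h=20\log\Delta$), so it does not survive the union with the $\Delta^4$ dependencies. Your fix --- first condition on the set of proposers and on typical behaviour of proposals outside $N(v)$, then expose only the $r\approx q\deg(v)$ relevant colours, with a surrogate variable that caps the effect of any single colour change --- is exactly what is needed, since it replaces the denominator $\deg(v)$ by $r=\Theta(\E X)$ and yields exponent $\Omega(h)=\Omega(\log\Delta)$ in the first regime and $\Omega(\log\Delta)$ directly in the second. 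Equivalently one can invoke a certifiable/Talagrand-type inequality such as Corollary~\ref{permutation} of this paper, whose exponent is $t^2/O(\E X)$ rather than $t^2/n$; that is how the analogous steps are handled in Sections~\ref{sec:proof_manycolours} and \ref{sec:proof_lowerbound}. So the proposal is a sound skeleton with the hard step correctly identified rather than a completed proof; I see no step that is wrong in principle.
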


This implies the following: 

\begin{corollary}
    \label{kpcor}
    There is a $\Delta_0$ such that if $G$ is a $\Delta$-regular graph of maximum 
    degree $\Delta  \ge \Delta_0$, then for any $h \le \frac{\Delta}{75}$, $\chi_{\pcf}^h(G) \le \Delta+\max\{30h, 600 \ln \Delta\}$. 
\end{corollary}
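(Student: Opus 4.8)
The plan is to obtain the corollary as a direct specialization of \Cref{kptheorem} to the regular case, so there is no real content beyond checking that the hypotheses transfer. First I would observe that a $\Delta$-regular graph $G$ has minimum degree $\delta = \Delta$, so substituting $\delta = \Delta$ into \Cref{kptheorem} is legitimate. The first hypothesis of that theorem, $\delta \ge 1500\log\Delta$, then reads $\Delta \ge 1500\log\Delta$; since $\Delta/\log\Delta \to \infty$, this holds for all $\Delta$ larger than some absolute constant, and by enlarging the $\Delta_0$ supplied by \Cref{kptheorem} if necessary we may assume it. The second hypothesis, $h \le \delta/75$, becomes exactly the hypothesis $h \le \Delta/75$ we are given, so nothing needs to be done there. (Note also that with $h \le \Delta/75 < \Delta = \deg(v)$ for every vertex $v$, the quantity $\min\{\deg(v),h\}$ appearing in the definition of an $h$-conflict-free colouring is just $h$, which is the intended reading.)

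With both hypotheses in force, \Cref{kptheorem} gives
\[
  \chi^h_{\pcf}(G) \;\le\; \Delta\Bigl(1 + \max\Bigl\{\tfrac{30h}{\delta},\, \tfrac{600\log\Delta}{\delta}\Bigr\}\Bigr)
  \;=\; \Delta\Bigl(1 + \max\Bigl\{\tfrac{30h}{\Delta},\, \tfrac{600\log\Delta}{\Delta}\Bigr\}\Bigr)
  \;=\; \Delta + \max\{30h,\, 600\log\Delta\},
\]
which is the asserted bound, once one reads $\log$ as the natural logarithm $\ln$ (consistently with the statement of \Cref{kptheorem} and the convention of \cite{kp}). I do not expect any obstacle: the only step deserving a sentence is the absorption of the lower-bound-on-$\delta$ condition $\Delta \ge 1500\log\Delta$ into the choice of $\Delta_0$, and even that is routine. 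If one wished to state the corollary with an explicit $\Delta_0$ rather than "sufficiently large", one would simply take $\Delta_0 = \max\{\Delta_0', c\}$, where $\Delta_0'$ is the threshold from \Cref{kptheorem} and $c$ is the least integer with $c \ge 1500\log c$.
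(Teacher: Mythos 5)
Your proposal is correct and is exactly the intended derivation: the paper states \cref{kpcor} as an immediate consequence of \cref{kptheorem}, obtained by substituting $\delta=\Delta$ for a regular graph and absorbing the condition $\Delta\ge 1500\log\Delta$ into the choice of $\Delta_0$. Nothing is missing.
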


One consequence of our results (Theorem \ref{lowerbound}) is a strengthening of this corollary for small $h$ and proves the aforementioned conjecture of Caro et al.\ \cite{cps} for regular graphs of large degree in a strong form: for large enough $\Delta$, every $\Delta$-regular graph $G$ satisfies $\chi_{pcf}^h(G) \le \Delta+1$ for any $h \le \frac{\Delta}{8001}$. 
This is best possible for such $h$ as a $(\Delta+1)$-clique shows.

We note that for a $d$-regular graph, a colouring is an $h$-conflict-free proper colouring precisely if it 
is a $(d-h)$-peaceful colouring. In contrast, if  $G$ is an irregular graph of maximum degree $\Delta$ 
then while a $p$-peaceful colouring and a $(\Delta-p)$-conflict-free colouring impose the same condition
on maximum degree vertices, the peaceful colouring imposes weaker conditions on low degree vertices. 

Note that every graph $G$ of maximum degree $\Delta$ has a proper $(\Delta+1)$-colouring which is $\Delta$-peaceful. 
On the other hand, a 1-peaceful colouring of $G$ is a colouring of its square and this may require $\Omega(\Delta^2)$ colours. 

In this paper, we consider how peaceful a proper $(\Delta+1)$-colouring of a graph of maximum degree $\Delta$ can be.
In particular, we define $f(\Delta)$ to be the minimum $p$ such that every graph of maximum degree $\Delta$ permits a $p$-peaceful colouring using $\Delta+1$ colours.

We obtain the following lower bound for $f(\Delta)$, even for graphs of low codegree: 
(The {\it codegree} of two distinct vertices is the number of their common neighbours, and the {\it maximum codegree} of a graph is the maximum of the codegree of any two distinct vertices of the graph.)

\begin{theorem}
\label{upperbound}
    There is a $\Delta_0$ such that for $\Delta \ge \Delta_0$, there are bipartite graphs of maximum degree $\Delta$ and maximum codegree at most $\log^2 \Delta$ which have no $((1-e^{-1})\Delta-\frac{\Delta}{\log^{1/9}\Delta})$-peaceful colourings with $\Delta+1$ colours. 
    Thus for sufficiently large $\Delta$, $$f(\Delta) > (1-e^{-1})\Delta-\frac{\Delta}{\log^{1/9}\Delta}.$$ 
\end{theorem}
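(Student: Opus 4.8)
The plan is to take $G$ at random and bound, by a union bound over all $(\Delta+1)$-colourings, the probability that some colouring is $p$-peaceful, where $p:=(1-e^{-1})\Delta-\Delta/\log^{1/9}\Delta$. First I would fix $n$ to be a sufficiently large fixed power of $\Delta$, say $n=\Delta^5$, set $d:=\Delta-\lceil\sqrt{50\Delta\ln\Delta}\,\rceil$, and let $G$ be the bipartite graph with parts $A,B$, $\abs{A}=\abs{B}=n$, in which $N(b)$ is, for each $b\in B$, an independently chosen uniformly random $d$-element subset of $A$. Then $\abs{N(b)}=d\le\Delta$ for every $b\in B$, while for each $a\in A$ the degree $\abs{N(a)}$ has the $\Bin(n,d/n)$ distribution; a Chernoff bound together with a union bound over $A$ shows that with probability $1-o(1)$ every vertex of $A$ has degree at most $\Delta$, so $\Delta(G)\le\Delta$. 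The codegree of any two vertices on the same side has expectation at most $d^2/n\le\Delta^2/n=o(1)$, so a Chernoff-type tail estimate and a union bound over the $O(n^2)$ pairs show that with probability $1-o(1)$ no two vertices of $G$ have more than $\log^2\Delta$ common neighbours (for two vertices on opposite sides the codegree is $0$, as $G$ is bipartite). If one insists on $\Delta(G)=\Delta$ exactly, one may disjointly add any $\Delta$-regular bipartite graph of maximum codegree at most $\log^2\Delta$ — such graphs exist for every $\Delta$ — which affects nothing below.

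It remains to show that with probability $1-o(1)$ no colouring $f\colon A\cup B\to\{1,\dots,\Delta+1\}$ with no monochromatic edge is $p$-peaceful. So I would fix such an $f$ and a vertex $b\in B$, and write $U_f(b)$ for the number of $w\in N(b)$ whose colour under $f$ is carried by no other vertex of $N(b)$; the neighbours of $b$ counted in the peacefulness condition are then precisely the $\abs{N(b)}-U_f(b)=d-U_f(b)$ remaining ones. The heart of the argument is the balls-into-bins bound
\[
  \E[U_f(b)]\;=\;\sum_{c}\Pee\bigl[\text{colour $c$ appears exactly once on }N(b)\bigr]\;\le\;\frac{\Delta}{e}+O(1),
\]
which holds because $f$ uses at most $\Delta+1$ colours on $A$, because (comparing the hypergeometric distribution to the binomial $\Bin(d,\cdot)$, and using that $n$ is a large power of $\Delta$) the probability that any single colour class meets the random $d$-set $N(b)$ in exactly one vertex is at most $\max_{0\le t\le 1}dt(1-t)^{d-1}=(1-1/d)^{d-1}$ up to a factor $1+o(\Delta^{-1})$, and because $(1-1/d)^{d-1}=e^{-1}+O(1/\Delta)$. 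Next, since changing one element of $N(b)$ changes $U_f(b)$ by at most $2$, the bounded-differences inequality for functions of a uniformly random $d$-subset, applied with the gap $d-p=\tfrac{\Delta}{e}+\tfrac{\Delta}{\log^{1/9}\Delta}-O(\sqrt{\Delta\ln\Delta})\ge\E[U_f(b)]+\tfrac{\Delta}{3\log^{1/9}\Delta}$ (valid for $\Delta$ large), yields
\[
  \Pee\bigl[\,U_f(b)\ge d-p\,\bigr]\;\le\;\exp\!\left(-\,\frac{(\Delta/(3\log^{1/9}\Delta))^{2}}{8d}\right)\;=\;\exp\!\bigl(-\Omega(\Delta/\log^{2/9}\Delta)\bigr)\;=:\;q.
\]

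If $f$ is $p$-peaceful then $U_f(b)\ge d-p$ for every $b\in B$, and since the sets $N(b)$ are chosen independently these $n$ events are independent, so $\Pee[f\text{ is }p\text{-peaceful}]\le q^{\,n}$. A union bound over the at most $(\Delta+1)^{2n}$ colourings of $A\cup B$ then bounds the probability that a $p$-peaceful $(\Delta+1)$-colouring exists by $\bigl(q(\Delta+1)^{2}\bigr)^{n}\to 0$, because $\ln(1/q)=\Omega(\Delta/\log^{2/9}\Delta)$ swamps $2\ln(\Delta+1)$. Combining this with the two structural statements of the first paragraph shows that, for $\Delta$ large enough, some instance of $G$ is a bipartite graph of maximum degree $\Delta$ and maximum codegree at most $\log^2\Delta$ admitting no $p$-peaceful colouring with $\Delta+1$ colours, and the claimed lower bound on $f(\Delta)$ follows at once.

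The step I expect to be the crux is the displayed bound $\E[U_f(b)]\le\Delta/e+O(1)$: one must control this expectation for a completely adversarial — and possibly very unbalanced — colouring $f$ of $A$, and the argument, together with the constant $1-e^{-1}$ itself, rests on the twin facts that only $\Delta+1$ colours are available and that no colour class, whatever its size, meets the fresh random neighbourhood of $b$ exactly once with probability exceeding $(1+o(1))e^{-1}$. Everything else — the degree and codegree concentration, the bounded-differences concentration of $U_f(b)$, and the union bound over colourings — is routine once $n$ and $d$ are fixed as above; the only mild care needed is to keep the lower-order error in $\E[U_f(b)]$ genuinely below $\Delta/\log^{1/9}\Delta$, which is why $n$ is taken to be a large power of $\Delta$.
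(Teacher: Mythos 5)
Your proof is correct, but it takes a genuinely different route from the paper's. The paper works with a small bipartite graph ($|A|\approx\Delta^2/\log^{3/2}\Delta$, $|B|=\Delta$, each $b\in B$ getting a random $\Delta$-subset of $A$) and proves a \emph{deterministic} property of it (Lemma \ref{lem_upperbound_bipartite}): every subset $S\subseteq A$ of size at most $\Delta/\log^{5/4}\Delta$ is met exactly once by at most $(e^{-1}+o(1))|B|$ vertices of $B$. The union bound there is over these small subsets (not over colourings), large colour classes are handled separately by the observation that there are at most $\Delta/\log^{1/4}\Delta$ of them, and the existence of a bad vertex $b$ is then extracted from \emph{any} colouring by summing $c_b$ over $B$ and averaging. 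You instead take both sides of size $n=\Delta^5$, union bound directly over all $(\Delta+1)^{2n}$ colourings, and win because each fixed colouring must pass $n$ independent tests (one per $b\in B$), each failed with probability $1-q$ where $q=\exp(-\Omega(\Delta/\log^{2/9}\Delta))$; the large-colour-class issue is absorbed into the uniform pointwise bound $\max_t\, dt(1-t)^{d-1}\le(1+O(1/\Delta))e^{-1}$, which indeed holds for the hypergeometric distribution at every class size (classes of size $\Omega(n)$ are hit exactly once with only exponentially small probability). What the paper's route buys is a much smaller witness graph and an explicit structural lemma; what yours buys is a more mechanical argument in which independence across $B$ makes the union bound over colourings essentially free, at the cost of a polynomially larger graph and the small bookkeeping of taking $d=\Delta-\Theta(\sqrt{\Delta\log\Delta})$ so that the $A$-side degrees stay below $\Delta$ (a loss of order $\sqrt{\Delta\log\Delta}$, harmless against the slack $\Delta/\log^{1/9}\Delta$). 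Both arguments rest on the same core estimate — at most $\Delta+1$ colour classes, each meeting a random neighbourhood exactly once with probability at most roughly $e^{-1}$ — so the constant $1-e^{-1}$ arises identically in both.
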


This bound is essentially  tight for  graphs of low codegree.   

\begin{theorem}
\label{sparsegraphs}
     There is a $\Delta_0$ and a $C$     
     such that for $\Delta \ge \Delta_0$, every graph of  maximum degree $\Delta$ and  codegree at most $\frac{\sqrt{\Delta}}{\log^8 \Delta}$ has a proper $(\Delta+1)$-colouring which is $\lceil (1-e^{-1})\Delta+ \frac{C\Delta}{\log \log \Delta} \rceil  $ peaceful.  
\end{theorem}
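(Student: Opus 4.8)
The target is Theorem~\ref{sparsegraphs}: a proper $(\Delta+1)$-colouring of any graph of maximum degree $\Delta$ and codegree at most $\sqrt{\Delta}/\log^8\Delta$ that is $\lceil(1-e^{-1})\Delta + C\Delta/\log\log\Delta\rceil$-peaceful. Let me think about what a random colouring gives and why $(1-e^{-1})\Delta$ is the right constant.

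The quantity $(1-e^{-1})\Delta$ is exactly what a uniformly random $(\Delta+1)$-colouring achieves in expectation: for a vertex $v$ of degree $d$ and a colour $c$, the probability that exactly one neighbour of $v$ receives $c$ is $\tfrac{d}{\Delta+1}(1-\tfrac{1}{\Delta+1})^{d-1}$, so the expected number of colours appearing exactly once on $N(v)$ is $d(1-\tfrac{1}{\Delta+1})^{d-1}$ and the expected number of bad neighbours of $v$ is $d\bigl(1-(1-\tfrac{1}{\Delta+1})^{d-1}\bigr)$, which is increasing in $d$ and equals $(1-e^{-1})\Delta+O(1)$ at $d=\Delta$. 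A vertex of degree at most $(1-e^{-1})\Delta$ is trivially fine, so the theorem amounts to realising this random behaviour at \emph{every} vertex simultaneously by one \emph{proper} $(\Delta+1)$-colouring. The difficulty is the properness: with only $\Delta+1$ colours a naive ``colour independently and apply the Lov\'asz Local Lemma'' argument collapses, since each event asserting that a vertex is peaceful is determined by the colours on an entire neighbourhood and therefore lies in the dependency neighbourhood of about $\Delta^2$ monochromatic-edge events; this forces the local-lemma weight of each edge event down to $O(\Delta^{-2})$, below its probability $\tfrac{1}{\Delta+1}$.

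The plan is to use the semi-random (nibble) method. Fix a small parameter $\eta=\Theta(1/\log\log\Delta)$ and run $O(\log(1/\eta))$ rounds; in each round every still-uncoloured vertex picks a colour uniformly at random from its current list, namely the colours not yet permanently assigned to any of its neighbours, and is made permanent unless some uncoloured neighbour picked the same colour. The number of uncoloured neighbours of a fixed vertex drops by a constant factor each round, so after the last round at most about $\eta\Delta$ neighbours of any vertex remain uncoloured, and the colouring is then completed greedily, pessimistically declaring every vertex coloured in this final step to be bad. For each vertex $v$ I would maintain through the rounds: (i) the total number of singleton colours accumulated on $N(v)$; (ii) the number of still-uncoloured neighbours of $v$; and (iii) the list sizes of the uncoloured neighbours of $v$. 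The point of the codegree hypothesis $\kappa\le\sqrt{\Delta}/\log^8\Delta$ is that $N(v)$ then spans only about $\Delta\kappa\ll\Delta^{3/2}$ edges, so on $N(v)$ the colouring behaves like a uniform random sample: the list sizes stay close to the current uncoloured degree, the per-round gains in singleton colours are close to their idealised values, and all of (i)--(iii) are sharply concentrated. Summing the per-round gains shows that the number of singleton colours on $N(v)$ ends within $O(\eta\Delta)$ of $\Delta/e$, i.e.\ $v$ has at most $(1-e^{-1})\Delta+O(\eta\Delta)$ bad neighbours.

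To make this hold at every vertex at once — a union bound over $n$ is useless since $n$ is unbounded — I would run the Local Lemma round by round: the bad events are the failures of the concentration estimates for (i)--(iii), each determined by the choices made within bounded distance of the vertex in the rounds so far, so the dependency degree is a fixed power of $\Delta$, while Azuma's inequality (and Talagrand's inequality, exploiting the local sparsity, for the quantities where sharper concentration than Azuma gives is needed) makes each bad event far less likely than the reciprocal of that degree. One conditions on the good events of the earlier rounds surviving and re-applies the local lemma to the next round; keeping the lists large enough for the nibble to continue, the degree drop, the concentration of the singleton counts, and properness all simultaneously valid throughout the iteration is the technical core.

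I expect the main obstacle to be propagating the ``behaves like a uniform random sample on $N(v)$'' property through the rounds: the lists are not $\{1,\dots,\Delta+1\}$ but shrink and vary from vertex to vertex, and each round adds a small discrepancy from the idealised behaviour plus extra conditioning from the previous rounds' applications of the local lemma. The error term $\Theta(\Delta/\log\log\Delta)$ is the slack that absorbs these accumulated discrepancies together with the $\approx\eta\Delta$ neighbours of each vertex left uncoloured at the end; it is presumably far from optimal, but taking $\eta$ this small is what keeps the number of rounds — and hence the accumulated dependency degree — under control while leaving room for the errors. Finally, low-degree vertices need no work, and one should check that the greedy completion can always be carried out, since each uncoloured vertex has at most $\Delta$ already-coloured neighbours and so a free colour remains.
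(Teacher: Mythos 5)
Your overall architecture — an iterative semi-random colouring compared against an idealised process that yields $\Delta/e$ singleton colours, with round-by-round concentration estimates glued together by the Local Lemma and a greedy completion absorbed into the $O(\Delta/\log\log\Delta)$ error term — is the same as the paper's. But there is a genuine gap at the step you describe as ``the per-round gains in singleton colours are close to their idealised values,'' and it stems from a misreading of what the codegree hypothesis is for. You invoke it to say that $N(v)$ spans few edges, so the colouring ``behaves like a uniform random sample on $N(v)$.'' That is not the relevant mechanism: the danger is not edges inside $N(v)$ but vertices at distance two. To show that each colour $c$ is (almost) equally likely to be newly assigned to a neighbour of $v$ in a given round — which is what makes the singleton count track the idealised recursion — one must control, for every colour $c$, the number $U^{v,c}_i$ of uncoloured neighbours of $v$ that still have $c$ on their list, and show $|U^{v,c}_i|\approx|U^{v,c'}_i|$ for all pairs $c,c'$. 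A single second-neighbour $u$ of $v$ choosing colour $c$ deletes $c$ from the lists of up to $|N(u)\cap N(v)|$ members of $N(v)$; the codegree bound is exactly the Lipschitz constant needed to make $|U^{v,c}_i|$ concentrate. The paper's $K_{\Delta,\Delta}$ example shows this can fail catastrophically even though $N(v)$ spans no edges at all, so the sparsity of $G[N(v)]$ that you extract from the hypothesis is neither the right consequence nor sufficient. Your proposal never introduces the quantities $U^{v,c}_i$, nor the device the paper uses to keep colours interchangeable (equalising coin flips plus truncating all lists to a common size), and without something playing that role the claim that the gains match the idealised values is unsupported.

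Two secondary points. First, you activate every uncoloured vertex each round (a ``large bite''), so a constant fraction of $N(v)$ is coloured per round and the second-order corrections — two neighbours of $v$ receiving the same new colour, existing singletons being destroyed — are of order $\Delta$ per round rather than negligible; the recursion for the singleton count then no longer visibly converges to $\Delta/e$, whereas the paper's small activation rate $\alpha=1/\log^2\Delta$ keeps these terms at $O(\alpha^2\Delta)$ per round and lets it compare directly with the trivial ``independent uniform colours on a star'' process. Second, the paper does not run the nibble with $\Delta+1$ colours: it uses $\Delta+\Theta(\Delta/\log\log\Delta)$ colours and then, in a postprocessing step, uncolours everything carrying an extra colour (together with a small block of ordinary colours) and recolours those vertices inside $O(\Delta/\log\Delta)$ colours using the Alon--Krivelevich--Sudakov bound for graphs with sparse neighbourhoods. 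If you insist on $\Delta+1$ colours throughout, you must redo the error analysis with the true (smaller) list sizes and justify that no analogous repair step is needed; this is plausible but is additional work your sketch does not acknowledge.
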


We ask whether Theorem \ref{sparsegraphs} can be strengthened by dropping the condition that the codegree is bounded.
In this direction, as mentioned earlier, we show that $\Delta-f(\Delta)$ is $\Omega(\Delta)$.

\begin{theorem}
\label{lowerbound}
    There is a $\Delta_0$ such that for $\Delta \ge \Delta_0$, every graph of maximum degree $\Delta$ has a proper $(\Delta+1)$-colouring which is $\frac{8000}{8001}\Delta$-peaceful.
    Thus, for sufficiently large $\Delta$, $$f(\Delta)  \le \frac{8000}{8001}\Delta.$$ 
\end{theorem}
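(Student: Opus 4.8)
The goal is to show every graph $G$ with $\Delta(G) \le \Delta$ admits a proper $(\Delta+1)$-colouring in which every vertex $v$ has at most $\frac{8000}{8001}\Delta$ "conflicted" neighbours (neighbours sharing a colour with another neighbour of $v$). Equivalently, every $v$ should have at least $\frac{\Delta}{8001}$ neighbours whose colour is unique in $N(v)$. My plan is a standard semi-random / Lovász Local Lemma argument: colour each vertex with a uniformly random colour from $\{1,\dots,\Delta+1\}$, then fix up defects. Before diving in, it's worth noting that since the target constant $\frac{8000}{8001}$ is far from optimal, we have a lot of slack, which should make the probabilistic estimates comfortable.

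First I would set up the random colouring and, for each vertex $v$, estimate $\E[X_v]$ where $X_v$ is the number of neighbours of $v$ receiving a "uniquely-appearing-in-$N(v)$" colour. For a fixed neighbour $w$ of $v$, conditioning on $f(w)=c$, the probability no other neighbour of $v$ gets colour $c$ is $(1-\frac{1}{\Delta+1})^{|N(v)\setminus\{w\}|} \ge (1-\frac{1}{\Delta+1})^{\Delta-1}$, which is bounded below by a constant close to $e^{-1}$ for large $\Delta$. So in expectation a constant fraction (roughly $e^{-1}$) of the $\deg(v)$ neighbours are "good," giving plenty of room above the $\frac{1}{8001}$ threshold. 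I would also need to handle the monochromatic-edge problem: either delete one endpoint of each monochromatic edge and recolour later, or fold the properness requirement into the bad events. The cleanest route is probably to make properness a separate, easily-fixed issue — a proper $(\Delta+1)$-colouring always exists greedily, so the real content is the peacefulness.

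The core step is the Local Lemma application. For each vertex $v$ define a bad event $A_v$ that $X_v < \frac{\Delta}{8001}$ (or, with the slack, something like $X_v$ is less than half its expectation). Since $f(w)$ for $w \in N(v)$ and $f(u)$ for $u$ at distance $2$ from $v$ together determine whether $A_v$ holds, $A_v$ depends only on colours within distance $2$ of $v$, so each $A_v$ is mutually independent of all but at most $\Delta^4$ (say) other bad events. The key quantitative input is a concentration bound: $\Pr[A_v] \le e^{-\Omega(\Delta)}$, which dominates $\Delta^4 \cdot \Pr[A_v] \le \frac14$ for large $\Delta$. Here the main obstacle is that $X_v$ is a sum of dependent indicators (whether $w$'s colour is unique depends on the whole neighbourhood), so a naive Chernoff bound doesn't apply directly. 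The standard fixes are Talagrand's inequality or McDiarmid's bounded-differences inequality: changing one vertex's colour changes $X_v$ by a bounded amount (at most $2$ or so — it can flip $w$ itself and at most one previously-unique neighbour), and $X_v$ is "$1$-certifiable," so Talagrand gives concentration of $X_v$ around its median with the required exponential tail. I'd expect getting this concentration argument set up correctly — identifying the right Lipschitz/certifiability constants and confirming $\E[X_v]$ is genuinely linear in $\Delta$ even after conditioning on properness fixes — to be the main technical hurdle, though none of it is deep given the generous constant. Finally, the Local Lemma yields a colouring avoiding all $A_v$; a cleanup pass (e.g. recolouring the $o(\Delta)$-sized pieces near any residual monochromatic edges, absorbing the change into the slack between $\frac{\Delta}{8001}$ and the $\approx e^{-1}\Delta$ we actually achieve) produces the desired proper $(\Delta+1)$-colouring.
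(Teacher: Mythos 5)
There is a genuine gap, and it sits exactly where you wave your hands: the ``cleanup pass'' for properness. With only $\Delta+1$ colours, a uniformly random colouring puts each vertex into a monochromatic edge with probability about $1-(1-\frac{1}{\Delta+1})^{\Delta}\approx 1-e^{-1}$, so roughly $0.63\Delta$ of the neighbours of a typical vertex $v$ must be uncoloured and recoloured --- this is not an ``$o(\Delta)$-sized piece,'' it is a constant fraction of every neighbourhood. Meanwhile, the number of neighbours of $v$ that both survive the uncolouring and carry a colour unique in $N(v)$ is only about $e^{-2}\Delta\approx 0.14\Delta$ in expectation. Each greedily recoloured neighbour can destroy the uniqueness of one surviving neighbour, so the guarantee you can extract (cf.\ Observation \ref{greedy_obs}) is $U_v-|N(v)\setminus\dom(f)|\approx 0.14\Delta-0.63\Delta<0$, i.e.\ nothing at all. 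Your expectation and concentration calculations are fine --- they are essentially the paper's proof of Theorem \ref{manycolours} --- but that theorem uses $\Theta(\mu^{-1}\Delta)$ colours precisely so that the uncolouring probability drops to $O(\mu)$ and can be absorbed by the slack. With $\Delta+1$ colours the losses from restoring properness overwhelm the gains, and no amount of slack in the constant $\frac{8000}{8001}$ rescues a quantity that is negative.

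The paper's actual proof has to work much harder for exactly this reason. It reserves a palette of $\lceil 20\epsilon\Delta\rceil$ colours for a carefully chosen ``usable'' set $Z$ meeting every relevant neighbourhood in $\Theta(\epsilon\Delta)$ vertices, colours $G-Z$ properly with the remaining $\Delta+1-\lceil 20\epsilon\Delta\rceil$ colours (so no uncolouring is needed there), and only runs the random-colour-then-uncolour argument inside $Z$, where the reserved palette is large relative to the degrees into $Z$. Making $G-Z$ colourable with that many fewer colours is itself nontrivial: it forces the dense-decomposition machinery of Lemma 15.2 of \cite{mr_book}, the analysis of near-$(\Delta+1)$-cliques via suitable colourings, and the Bonamy--Perrett--Postle bound for the sparse part. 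None of this appears in your plan, and without some device of this kind the single-round random colouring cannot get down to $\Delta+1$ colours.
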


Recall that Caro et al.\ \cite{cps} conjectured that every connected graph of maximum degree $\Delta \geq 3$ has a proper conflict-free coloring with $\Delta+1$ colors. 
Theorem \ref{lowerbound} proves this conjecture for regular graphs of large degree in a strong form.

We can embed a graph $G$  of maximum degree $\Delta$ in a $\Delta$-regular graph by repeatedly taking two copies of 
the current host graph and adding an edge between the two copies of any vertex of degree less than $\Delta$.
Now, if the host graph has a $p$-peaceful colouring so does $G$. So, $f(\Delta)$ is the maximum over all 
$\Delta$-regular graphs $F$ of the difference between $\Delta$ and an integer $h$ for which $\chi_\pcf^h(F) \leq \Delta+1$. 
Hence, to prove Theorem \ref{lowerbound}, we need only prove it for $\Delta$-regular graphs.  The same is true for 
Theorem \ref{sparsegraphs}, as the maximum codegree between two vertices in different copies of the graph we 
duplicate in the construction process is at most two.

We observe that if we allow a number of colours which is linear in $\Delta$ then we can achieve almost perfect peace.

\begin{theorem}
\label{manycolours} 
     For every $\mu>0$, there is a $\Delta_\mu$, such that for $\Delta \ge \Delta_\mu$, every graph of maximum degree at most $\Delta$ has a proper $\lfloor 5\mu^{-1}\Delta \rfloor$-colouring which is $\mu \Delta$-peaceful.  
\end{theorem}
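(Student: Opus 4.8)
The plan is to colour the vertices independently and uniformly at random from a palette of size $k:=\lfloor 5\mu^{-1}\Delta\rfloor$, obtaining a random colouring $f$, and to use the Lov\'asz Local Lemma to show that with positive probability $f$ is simultaneously proper and $\mu\Delta$-peaceful. I will assume that $\mu$ is sufficiently small and that $\Delta$ is large in terms of $\mu$; the (easier) case of larger $\mu$ — where, once $\Delta$ is large, $k\ge\Delta+1$ and a proper $(\Delta+1)$-colouring is already $\Delta$-peaceful — I set aside. Introduce two families of bad events: for each edge $e=xy$ the event $A_e$ that $f(x)=f(y)$, and for each vertex $v$ the event $B_v$ that more than $\mu\Delta$ neighbours of $v$ receive a colour appearing on another neighbour of $v$. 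If no $A_e$ and no $B_v$ occurs, then $f$ is a proper $\mu\Delta$-peaceful $k$-colouring, so it suffices to avoid all of these events at once.

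The probability estimates are the heart of the matter. Clearly $\Pee[A_e]=1/k=(1+o(1))\mu/(5\Delta)$. To bound $\Pee[B_v]$, with $d_v:=\deg(v)$, I would fix an ordering $w_1,\dots,w_{d_v}$ of $N(v)$ and set $Z_v:=\abs{\{i: f(w_i)\in\{f(w_1),\dots,f(w_{i-1})\}\}}$. A colour class of size $m\ge 2$ inside $N(v)$ accounts for $m$ conflicted neighbours of $v$ but contributes only $m-1\ge m/2$ to $Z_v$, so the number of conflicted neighbours of $v$ is at most $2Z_v$, whence $B_v\subseteq\{Z_v>\mu\Delta/2\}$. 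Exposing $f(w_1),\dots,f(w_{d_v})$ one at a time, the conditional probability that $f(w_i)$ repeats an earlier colour is at most $(i-1)/k$, so $Z_v$ is stochastically dominated by a sum of independent Bernoulli variables of total mean $\binom{d_v}{2}/k\le(1+o(1))\mu\Delta/10$. Since $\mu\Delta/2$ exceeds this mean by a factor at least $5-o(1)$, a Chernoff bound yields $\Pee[B_v]\le\exp\bigl(-(c-o(1))\mu\Delta\bigr)$, where $c=(5\ln 5-4)/10>\tfrac15$ is an absolute constant.

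Next come the dependencies. The event $A_e$ is determined by two vertex-colours, hence is mutually independent of all events except the at most $2\Delta$ events $A_{e'}$ with $e'$ meeting $e$ and the at most $2\Delta$ events $B_v$ with $v$ adjacent to an endpoint of $e$. The event $B_v$ is determined by the colours of $N(v)$, hence is mutually independent of all events except the at most $\Delta^2$ events $A_e$ with $e$ meeting $N(v)$ and the at most $\Delta^2$ events $B_u$ with $N(u)\cap N(v)\ne\emptyset$. I would then apply the general Local Lemma with weights $x(A_e)=\beta/\Delta$ and $x(B_v)=\gamma/\Delta$, where $\beta$ is a constant slightly larger than $\mu/5$, $\gamma$ is a much smaller constant, and $\beta+\gamma<c\mu$; such constants exist because $c>\tfrac15$. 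The inequality required at $A_e$, namely $1/k\le(\beta/\Delta)(1-\beta/\Delta)^{2\Delta}(1-\gamma/\Delta)^{2\Delta}$, holds because its right side is $(1+o(1))\beta e^{-2(\beta+\gamma)}/\Delta$ and $\beta e^{-2(\beta+\gamma)}>\mu/5$ for $\mu$ small. The inequality required at $B_v$, namely $\Pee[B_v]\le(\gamma/\Delta)(1-\beta/\Delta)^{\Delta^2}(1-\gamma/\Delta)^{\Delta^2}$, holds because its right side is at least $(\gamma/\Delta)e^{-(1+o(1))(\beta+\gamma)\Delta}$, while $\Pee[B_v]\le e^{-(c-o(1))\mu\Delta}$ with $c\mu>\beta+\gamma$ and $\ln(\Delta/\gamma)=o(\Delta)$. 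The Local Lemma then produces a colouring avoiding every $A_e$ and every $B_v$, which is a proper $\mu\Delta$-peaceful colouring with at most $k=\lfloor 5\mu^{-1}\Delta\rfloor$ colours.

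The step I expect to be the main obstacle is the Local Lemma inequality for $B_v$. The event $B_v$ genuinely depends on the colours of all of $N(v)$, hence on the $\Theta(\Delta^2)$ edge-events $A_e$ with $e$ meeting $N(v)$, and each such $A_e$ must carry weight at least $\Pee[A_e]=1/k$; so the product in the Local Lemma inequality for $B_v$ unavoidably loses a factor of order $e^{-\beta\Delta}$ with $\beta\gtrsim\mu/5$. The argument closes only because the generous palette size $k\approx 5\Delta/\mu$ does double duty: it makes $\Pee[A_e]$ small enough that $\beta$ can be taken just above $\mu/5$, and it simultaneously pushes the expected number of conflicted neighbours of any vertex down to about $\mu\Delta/5$, so that reaching $\mu\Delta$ is a deviation by a factor $5$ and $\Pee[B_v]\le e^{-c\mu\Delta}$ with $c=(5\ln 5-4)/10>\tfrac15$. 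Verifying that this $c$ beats $\tfrac15$ — which is exactly where the constant $5$ in the statement is spent — is the delicate point; the rest is bookkeeping.
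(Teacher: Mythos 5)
Your route differs from the paper's in a substantive way: you ask the one-shot uniform random colouring to be simultaneously proper and peaceful, policing properness with edge events $A_e$ and peacefulness with vertex events $B_v$ in an asymmetric Local Lemma, and you bound $\Pee[B_v]$ by an elementary stochastic-domination-plus-Chernoff argument. The paper instead uncolours every vertex lying in a monochromatic edge and completes the colouring greedily (Observation~\ref{greedy_obs}), so it never needs the random colouring to be proper; its only bad events are vertex events, handled by the symmetric Local Lemma together with a permutation concentration inequality. For $\mu$ sufficiently small your argument is correct: the probability estimates, the dependency counts, and the weighted Local Lemma verification all check out, and your identification of $c=(5\ln 5-4)/10>1/5$ as the constant that the palette size $5\mu^{-1}\Delta$ must beat is exactly the right accounting.

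The gap is that your two cases do not meet. The case you set aside covers only $\mu\ge 1$ (only then is a proper colouring automatically $\mu\Delta$-peaceful), while your main argument cannot be pushed up to $\mu=1$. The obstruction is structural rather than a matter of tuning: the Local Lemma inequality at $A_e$ forces $x(A_e)=\beta/\Delta$ with $\beta e^{-2\beta}\ge\mu/5$, and since $\max_{\beta>0}\beta e^{-2\beta}=1/(2e)$, this is infeasible once $\mu>5/(2e)\approx 0.92$ --- with $k=\lfloor 5\mu^{-1}\Delta\rfloor$ colours the Local Lemma cannot even certify that the random colouring is proper. Adding your requirement $\beta+\gamma<c\mu$ from the $B_v$ inequality lowers the threshold further, to roughly $\mu\le 0.87$ (one needs $0.4047\,\mu e^{-0.81\mu}\ge\mu/5$). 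So the window $\mu\in(0.87,1)$ is uncovered. The natural repair is essentially the paper's device: drop the edge events, uncolour conflicted vertices, and complete greedily via Observation~\ref{greedy_obs}; the relevant bad event then becomes ``too many $w\in N(v)$ receive a colour appearing elsewhere on $N(v)\cup N(w)$,'' whose expectation is at most $\tfrac{2}{5}\mu\Delta<\tfrac12\mu\Delta$ for every $\mu\le 1$, and the symmetric Local Lemma over events within distance four suffices. As written, though, your proof does not establish the theorem for all $\mu$.
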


This paper is organized as follows.
Section \ref{sec:preleminaries} presents some standard probabilistic tools and a few simple observations we will need. 
We then present  the straightforward proof of Theorem \ref{manycolours} in Section \ref{sec:proof_manycolours}. It illustrates the  main approach we use  in a very simple setting.   
We next prove Theorem \ref{upperbound} in Section \ref{sec:proof_upperbound}, which, as it is a lower bound, requires a different approach than the rest of the results.
In Section \ref{sec:proof_lowerbound}, we combine the ideas in the proof of Theorem \ref{manycolours} with further machinery, including decomposing graphs into small dense sets and a set of vertices  with sparse neighbourhoods, to prove Theorem \ref{lowerbound}.
Finally, we prove Theorem \ref{sparsegraphs}, using an iterative pseudo-random procedure.

\section{Preliminaries} \label{sec:preleminaries}

Let $G$ be a graph.
A {\it partial colouring} of $G$ is a function whose domain is a subset of $V(G)$ such that there is no monochromatic edge. For a function $f$, we let $dom(f)$ be the domain of $f$. 
For a partial colouring $f$ of $G$, we say that a neighbour  $w$ of $v$  is {\it $(f,v)$-undisturbed} if $w$ is coloured and there is no $u \in N(v)-w$ with $f(u)=f(w)$. We let  $U_v$ be the number of $(f,v)$-undisturbed vertices.
Note that a proper colouring $f$ of $G$ is $p$-peaceful if and only if every $v \in V(G)$ satisfies $U_v \ge \deg(v)-p$.

The following straightforward observation will be used frequently, we prove it by simply completing 
the colouring greedily. 

\begin{observation} \label{greedy_obs}
    Let $p$ be a nonnegative real number.
    Let $G$ be a graph with maximum degree at most $\Delta$.
    If for some $c \ge \Delta+1$, there is a partial $c$-colouring $f$ of $G$ such that for every vertex $v \in V(G)$ which is not in a clique of size at least $\Delta+1-\frac{p}{2}$, $U_v-|N(v)-\dom(f)| \geq \deg(v)-p$,  then $G$ has a $p$-peaceful $c$-colouring. 
\end{observation}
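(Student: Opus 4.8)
The plan is to complete the partial colouring $f$ greedily, colouring the uncoloured vertices one at a time, and to check that the hypothesis guarantees that at each step a legal (non-monochromatic-edge-creating) colour is available, and that the final colouring is $p$-peaceful. First I would handle the vertices that do lie in a large clique: if $v$ is in a clique $K$ of size at least $\Delta+1-\frac p2$, then since $\deg(v)\le\Delta$, at least $|K|-1\ge \Delta-\frac p2$ of $v$'s neighbours lie in $K$ and receive pairwise distinct colours, so in any proper completion the number of neighbours of $v$ sharing a colour with another neighbour is at most $\deg(v)-(|K|-1)\le \Delta-(\Delta-\frac p2)=\frac p2\le p$; thus the peacefulness condition at $v$ is automatic regardless of how we complete $f$. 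So it suffices to worry about vertices not in such a clique.

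Next I would do the greedy extension. List $V(G)\setminus\dom(f)=\{v_1,\dots,v_k\}$ in an arbitrary order and colour them in turn. When we come to $v_i$, at most $\deg(v_i)\le\Delta$ of its neighbours are already coloured, so among the $c\ge\Delta+1$ available colours at least one is unused on $N(v_i)$, and we assign such a colour to $v_i$; this keeps the colouring proper at every step, so the result is a proper $c$-colouring. The key point is that this process never \emph{decreases} the ``undisturbed'' count in the relevant direction: for a fixed vertex $v$, each neighbour $w$ of $v$ that is $(f,v)$-undisturbed in the partial colouring could conceivably become disturbed if some later-coloured neighbour of $v$ happens to get colour $f(w)$. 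To control this I would instead bound things by the quantity in the hypothesis: each currently-uncoloured neighbour of $v$, once coloured, can spoil at most one previously-undisturbed neighbour of $v$ (and may itself be disturbed or undisturbed). Hence in the final colouring the number of undisturbed neighbours of $v$ is at least $U_v-|N(v)\setminus\dom(f)|$, where $U_v$ refers to the original partial colouring $f$.

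Now I would combine the two parts. For a vertex $v$ not in a clique of size at least $\Delta+1-\frac p2$, the hypothesis gives $U_v-|N(v)\setminus\dom(f)|\ge \deg(v)-p$, so by the previous paragraph $v$ has at least $\deg(v)-p$ undisturbed neighbours in the completed colouring, i.e. at most $p$ of its neighbours share a colour with another neighbour; for a vertex $v$ that \emph{is} in such a clique, the first paragraph already gives the same conclusion. By the characterisation of $p$-peacefulness recorded just before the observation (every $v$ satisfies (final undisturbed count) $\ge\deg(v)-p$), the completed colouring is a $p$-peaceful $c$-colouring of $G$. I do not expect any genuine obstacle here; the only thing requiring a moment's care is the bookkeeping that colouring one new neighbour of $v$ can destroy at most one undisturbed neighbour of $v$, which is where the term $|N(v)\setminus\dom(f)|$ (rather than something larger) comes from, together with the observation that large-clique vertices must be excluded because for them $U_v$ in the \emph{partial} colouring could be small even though peacefulness is guaranteed.
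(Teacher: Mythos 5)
Your proposal follows the same route as the paper: extend $f$ greedily to a proper $c$-colouring (possible since $c\ge\Delta+1$), observe that each newly coloured neighbour of $v$ can spoil at most one previously $(f,v)$-undisturbed neighbour (because undisturbed neighbours carry pairwise distinct colours), and handle large-clique vertices separately. The non-clique part of your argument is correct and is exactly the paper's.

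There is, however, a genuine slip in the clique case. You assert that if $v$ lies in a clique $K$ with $|K|\ge\Delta+1-\frac p2$, then the number of neighbours of $v$ sharing a colour with another neighbour is at most $\deg(v)-(|K|-1)\le\frac p2$. That inference is false: the fact that the $|K|-1$ neighbours of $v$ inside $K$ receive pairwise distinct colours does not make them undisturbed, since a neighbour $w\in K$ is still disturbed if some neighbour of $v$ \emph{outside} $K$ happens to receive the colour of $w$. The correct count is the paper's: $v$ has at most $\deg(v)-(|K|-1)\le\frac p2$ neighbours outside $K$, each of which may itself be disturbed and each of which (since the $K$-neighbours carry distinct colours) can disturb at most one neighbour inside $K$; hence at most $\frac p2+\frac p2=p$ neighbours of $v$ are disturbed. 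Your final inequality ``$\le\frac p2\le p$'' therefore reaches the right conclusion, but via a wrong intermediate bound — and the factor of two you lost is precisely the reason the hypothesis demands cliques of size $\Delta+1-\frac p2$ rather than $\Delta+1-p$. With that correction the proof is complete and coincides with the paper's.
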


\begin{proof}
Since $c \geq \Delta+1$, we can extend $f$ to a proper $c$-colouring $f'$ of $G$.
For every vertex $v$  of $G$ in a clique of size $\Delta+1-\frac{p}{2}$, its neighbours in the clique  receive distinct colours 
and at most $\frac{p}{2}$ receive a colour  assigned to a neighbour of $v$  not in the clique. 
For every  other vertex $v$, there are at most  $|N(v)-\dom(f)|$ colours which are used by $f'$ on $N(v)$ but not by $f$. So there are at least $U_v-|N(v)-\dom(f)| \geq \deg(v)-p$ $(f',v)$-undisturbed vertices.
So $f'$ is a $p$-peaceful colouring.
\end{proof}

We need the following which allows us to analyze the neighbourhood of each vertex separately.

\begin{lemma}[Lov\'{a}sz Local Lemma \cite{el}] \label{lll}
Let $p<1$ be a nonnegative real number.
Let $d$ be a nonnegative integer.
Let ${\mathcal E}$ be a set of events such that for every $A \in {\mathcal E}$, $P(A) \leq p<1$ and there is a set $S_A$ of  at most $d$ events such that $A$ is mutually independent of ${\mathcal E}-S_A$.
If $4pd<1$, then with positive probability, none of the events in ${\mathcal E}$ occurs.
\end{lemma}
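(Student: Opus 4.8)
The plan is to give the standard proof of the symmetric Local Lemma by an induction that bounds conditional probabilities. We treat the case needed in this paper, where $\mathcal E=\{A_1,\dots,A_n\}$ is finite. The heart of the argument is the following claim, which I would prove by induction on $|\mathcal S|$ over all index sets $\mathcal S\subseteq\{1,\dots,n\}$: \emph{(i)} $P\bigl(\bigcap_{j\in\mathcal S}\overline{A_j}\bigr)>0$, and \emph{(ii)} for every $i\notin\mathcal S$, $P\bigl(A_i\bigm|\bigcap_{j\in\mathcal S}\overline{A_j}\bigr)\le 2p$. The base case $\mathcal S=\emptyset$ holds since $P(A_i)\le p\le 2p$; and if $d=0$ then all events are mutually independent and the conclusion is immediate, so I would assume $d\ge 1$, which forces $p<\tfrac14$ and in particular $1-2p>0$.

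In the inductive step, fix $\mathcal S\neq\emptyset$. First I would establish \emph{(i)}: choosing any $i_0\in\mathcal S$ and setting $\mathcal S'=\mathcal S\setminus\{i_0\}$, the inductive hypothesis gives $P\bigl(\bigcap_{j\in\mathcal S'}\overline{A_j}\bigr)>0$ and $P\bigl(A_{i_0}\bigm|\bigcap_{j\in\mathcal S'}\overline{A_j}\bigr)\le 2p<1$, whence $P\bigl(\bigcap_{j\in\mathcal S}\overline{A_j}\bigr)=\bigl(1-P(A_{i_0}\mid\bigcap_{\mathcal S'}\overline{A_j})\bigr)\cdot P\bigl(\bigcap_{\mathcal S'}\overline{A_j}\bigr)>0$. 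For \emph{(ii)}, fix $i\notin\mathcal S$, split $\mathcal S$ into $\mathcal S_1=\{j\in\mathcal S:A_j\in S_{A_i}\}$ and $\mathcal S_2=\mathcal S\setminus\mathcal S_1$ (so $|\mathcal S_1|\le d$), and set $B=\bigcap_{j\in\mathcal S_1}\overline{A_j}$, $C=\bigcap_{j\in\mathcal S_2}\overline{A_j}$. Since \emph{(i)} — just proved for $\mathcal S$, and known by induction for the subset $\mathcal S_2$ — makes every conditioning below legitimate,
$$P\bigl(A_i\bigm| B\cap C\bigr)=\frac{P(A_i\cap B\mid C)}{P(B\mid C)}\le\frac{P(A_i\mid C)}{P(B\mid C)}=\frac{P(A_i)}{P(B\mid C)}\le\frac{p}{P(B\mid C)},$$
where $P(A_i\mid C)=P(A_i)$ because $A_i$ is mutually independent of $\{A_j:j\in\mathcal S_2\}\subseteq\mathcal E\setminus S_{A_i}$, hence of the event $C$ they generate.

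To bound the denominator, write $\mathcal S_1=\{j_1,\dots,j_r\}$ with $r\le d$ and telescope:
$$P(B\mid C)=\prod_{k=1}^{r}\Bigl(1-P\bigl(A_{j_k}\bigm|\overline{A_{j_1}}\cap\cdots\cap\overline{A_{j_{k-1}}}\cap C\bigr)\Bigr)\ \ge\ (1-2p)^{r}\ \ge\ (1-2p)^{d},$$
because each conditioning index set $\{j_1,\dots,j_{k-1}\}\cup\mathcal S_2$ has fewer than $|\mathcal S|$ elements and does not contain $j_k$, so \emph{(ii)} of the inductive hypothesis bounds each factor's conditional probability by $2p$. Since $4pd<1$ gives $2p<\tfrac1{2d}$, we have $(1-2p)^d>(1-\tfrac1{2d})^d\ge\tfrac12$ — the elementary fact being that $(1-\tfrac1{2d})^d$ is increasing in $d\ge1$ with value $\tfrac12$ at $d=1$ — and therefore $P(A_i\mid B\cap C)\le p/(1-2p)^d<2p$, which is \emph{(ii)} for $\mathcal S$. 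Finally, telescoping the claim over all of $\{1,\dots,n\}$ gives $P\bigl(\bigcap_{i=1}^{n}\overline{A_i}\bigr)=\prod_{k=1}^{n}P\bigl(\overline{A_{i_k}}\bigm|\overline{A_{i_1}}\cap\cdots\cap\overline{A_{i_{k-1}}}\bigr)\ge(1-2p)^{n}>0$, as required. There is no genuine difficulty here beyond bookkeeping; the three points to watch are keeping the positivity \emph{(i)} of every conditioning event alive through the induction (so all conditional probabilities written are defined), making sure the events discarded in passing from $P(A_i\cap B\mid C)$ to $P(A_i\mid C)$ are exactly those $A_i$ is promised to be independent of, and checking that each conditioning set in the telescoped denominator is strictly smaller than $\mathcal S$ so the inductive hypothesis can be invoked. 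The lone computation, $(1-\tfrac1{2d})^d\ge\tfrac12$ for integers $d\ge1$, is routine.
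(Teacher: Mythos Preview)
Your proof is correct; it is the standard inductive argument for the symmetric Lov\'asz Local Lemma, and the bookkeeping (positivity of conditioning events, the split into $\mathcal S_1,\mathcal S_2$, the telescoping bound on $P(B\mid C)$, and the elementary inequality $(1-\tfrac{1}{2d})^d\ge\tfrac12$) is handled carefully. Note, however, that the paper does not supply its own proof of this lemma: it is stated with a citation to Erd\H{o}s--Lov\'asz and used as a black box, so there is no in-paper argument to compare against.
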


We also need the following bespoke theorem which Colin McDiarmid \cite{mc}  developed specifically so it could be 
applied to quasi-random colouring procedures such as the one  we discuss. We present a slightly simplified version.  
We use $\Med(X)$ to denote the median of a random variable $X$. 

\begin{theorem}[\cite{mc}]
\label{concentrationtheorem}
Suppose that $X$ is a nonnegative valued random variable on the elements of a product space each component of which is a uniformly  random permutation on a finite set.  
Suppose further that there are real numbers $c$ and $r$ such that swapping the position of any two elements in a permutation can change the value of $X$ by at most $c$, and that for every real number $s \geq 0$, to certify $X \geq s$, we need only specify the elements in $rs$ positions where these positions can be in any of the permutations. 
Then
\begin{enumerate}
    \item  for all positive $t$, $P(X>{\rm Med}(X)+t) \le 2e^{\frac{-t^2}{4rc^2({\rm Med}(x)+t)}}$, and  
    \item  for all positive $t$, $P(X<{\rm Med}(X)-t) \le 2e^{\frac{-t^2}{4rc^2{\rm Med}(X)}}$. 
\end{enumerate}
\end{theorem}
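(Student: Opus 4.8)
The plan is to derive both tail bounds from a single convex-distance inequality for the underlying product-of-permutations space, and then to read off the distance estimates we need from the Lipschitz and certifiability hypotheses; this follows the scheme Talagrand introduced for independent trials, with the permutation-specific ingredients isolated. Write $\Omega = \Sigma_1 \times \cdots \times \Sigma_n$, where $\Sigma_\ell$ is the symmetric group on a finite set of ``positions'' with the uniform measure, so that a point $\omega$ assigns a value to each position of each coordinate. For $\omega,\omega' \in \Omega$ let $D(\omega,\omega')$ be the set of (coordinate, position) pairs at which they disagree, and for a nonnegative unit vector $\alpha$ indexed by such pairs put $d_T(\omega,A) = \sup_{\alpha \ge 0,\ \|\alpha\|_2 \le 1} \inf_{\omega' \in A} \sum_{(\ell,i) \in D(\omega,\omega')} \alpha_{\ell,i}$. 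The first step is to prove that for every nonempty event $A$,
\[
\E\!\left[\exp\!\left(\tfrac14\, d_T(\cdot,A)^2\right)\right] \le \frac{1}{P(A)},
\]
whence by Markov's inequality $P(A)\,P\!\left(d_T(\cdot,A) \ge u\right) \le e^{-u^2/4}$ for all $u>0$. I would prove the displayed inequality by induction on $n$: the inductive step is Talagrand's convexity argument, conditioning on the last permutation and combining conditional distances through the convexity of $\omega' \mapsto \mathbf{1}_{D(\omega,\omega')}$ in the Euclidean ball, which is what produces the constant $\tfrac14$. The base case, a single uniformly random permutation, has no independence to exploit and is the technical heart; it is handled by a coupling argument using the transitivity of $\Sigma_1$ --- equivalently, by revealing the permutation one value at a time and coupling the partial information with $A$ --- yielding the same constant.

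The second step is deterministic and is where permutations re-enter. The key sublemma is elementary: if $\sigma,\tau$ are permutations of one set disagreeing at exactly $k$ positions of a set $P$, then $\tau$ can be turned, by at most $k$ transpositions, into a permutation agreeing with $\sigma$ on all of $P$ --- repair the disagreeing positions of $P$ one at a time, noting that pulling the correct value into a position destroys no disagreement within $P$ other than the one being fixed, since the position it vacated was already a disagreement. With the $c$-Lipschitz hypothesis this says: altering $\omega'$ so as to agree with $\omega$ on a set $I$ of positions changes $X$ by at most $c$ times the number of positions of $I$ on which $\omega$ and $\omega'$ disagree. For the upper tail, set $m=\Med(X)$ and $B=\{X \le m\}$, so $P(B)\ge\tfrac12$, and take any $\omega$ with $X(\omega)\ge m+t$. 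By certifiability there is a set $I$ of at most $r(m+t)$ positions with $X(\omega'')\ge m+t$ whenever $\omega''$ agrees with $\omega$ on $I$. Every $\omega'\in B$ must disagree with $\omega$ on at least $t/c$ positions of $I$: otherwise the sublemma would reach from $\omega'$, in fewer than $t/c$ transpositions, a point agreeing with $\omega$ on $I$ and hence with $X\ge m+t$, forcing $X(\omega')>m$. Taking $\alpha=\mathbf{1}_I/\sqrt{|I|}$ gives $d_T(\omega,B)\ge (t/c)/\sqrt{r(m+t)}$ for every such $\omega$; with $u=(t/c)/\sqrt{r(m+t)}$ the convex-distance bound together with $P(B)\ge\tfrac12$ yields $P(X > m+t) \le 2e^{-t^2/(4rc^2(m+t))}$.

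The lower tail is symmetric. Set $A=\{X\ge m\}$, so $P(A)\ge\tfrac12$, and $B'=\{X<m-t\}$. For $\omega\in A$, certifiability applied with $s=m$ gives a set $I$ of at most $rm$ positions with $X(\omega'')\ge m$ on agreement; the same argument, now comparing against $X(\omega')<m-t$, shows every $\omega'\in B'$ disagrees with $\omega$ on at least $t/c$ positions of $I$, so $d_T(\omega,B')\ge (t/c)/\sqrt{rm}$ for all $\omega\in A$. Hence $P\!\left(d_T(\cdot,B') \ge (t/c)/\sqrt{rm}\right) \ge P(A) \ge \tfrac12$, and the convex-distance bound gives $P(X<m-t) \le 2e^{-t^2/(4rc^2 m)}$. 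I expect the single-permutation base case of the convex-distance inequality to be the main obstacle, since the remaining steps --- the induction on the number of permutations and the ``reading the distance'' argument --- are routine once the transposition sublemma is in hand, and it is exactly that sublemma which lets the parameter $r$ carry over unchanged from the independent-trials setting.
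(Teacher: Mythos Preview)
The paper does not prove this theorem at all; it is quoted from McDiarmid's paper \cite{mc} as a black box, with only the remark that McDiarmid's original constant $16$ becomes $4$ under the present normalisation of~$c$. So there is no ``paper's own proof'' to compare against.

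That said, your outline is the standard route and matches McDiarmid's actual argument in spirit: establish a Talagrand-type convex-distance inequality $\E[e^{d_T(\cdot,A)^2/4}]\le 1/P(A)$ for products of uniform permutations, then read off the two tails from the Lipschitz and certifiability hypotheses. Your transposition sublemma --- that two permutations disagreeing on $k$ positions of a set $I$ can be made to agree on $I$ using at most $k$ transpositions --- is exactly the device that lets the certifiability parameter $r$ survive the passage from independent coordinates to permutations, and your applications to the upper and lower tails are correct. You are also right that the single-permutation base case of the convex-distance inequality is the only genuinely nontrivial step; the inductive tensorisation and the distance-reading arguments are routine once that is in hand.
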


We note that we can and do  think of a choice of an element from a finite set as a 
permutation of the set where we choose the first element of the permutation. 
We note further that McDiarmid stated the lemma with the hypothesis  that swapping two elements 
changed the outcome by at most $2c$ and hence had a $16$ where we have a four. 

This inequality bounds concentration around the median not the mean, so we need to show that these two are close.
Using Lemma 4.6 in \cite{MC}, we obtain the following. 

\begin{corollary}[See \cite{lr}]
\label{permutation}
    If the hypotheses of Theorem \ref{concentrationtheorem} hold, then for all  $t>0$, $$P \left (|X-E[X]|>t+ 6c\sqrt{2rE[X]}+16c^2r \right ) \le 4e^{\frac{-t^2}{8rc^2(E[X]+t)}}.$$ 
\end{corollary}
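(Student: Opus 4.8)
The plan is to derive the concentration around the mean from the concentration around the median supplied by Theorem~\ref{concentrationtheorem}, via the standard two-step route: first bound $|E[X]-\Med(X)|$, then transfer the tail bounds from $\Med(X)$ to $E[X]$.

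Write $\mu=E[X]$ and $m=\Med(X)$. I would first record the elementary fact $m\le 2\mu$: since $X$ is nonnegative, bounding $X$ below by $m$ on the event $\{X\ge m\}$ and by $0$ elsewhere gives $\mu\ge m\cdot P(X\ge m)\ge m/2$. This factor $2$ is exactly what turns the $4rc^2$ in the denominators of Theorem~\ref{concentrationtheorem} into the $8rc^2$ in the corollary. Next, since $X$ lives on a finite product space it is bounded, so $\mu-m=E[(X-m)^+]-E[(m-X)^+]$ and hence $|\mu-m|\le\max\{E[(X-m)^+],\,E[(m-X)^+]\}$. I would bound each of these by integrating the corresponding tail estimate: $E[(X-m)^+]=\int_0^\infty P(X>m+s)\,ds$ and $E[(m-X)^+]=\int_0^\infty P(X<m-s)\,ds$. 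For the first, split at $s=m$: for $s\le m$ we have $m+s\le 2m$, so the exponent in Theorem~\ref{concentrationtheorem} is at least $s^2/(8rc^2m)$, and $\int_0^\infty 2e^{-s^2/(8rc^2m)}\,ds=\sqrt{8\pi rc^2m}$; for $s\ge m$ we have $m+s\le 2s$, so the exponent is at least $s/(8rc^2)$, and $\int_0^\infty 2e^{-s/(8rc^2)}\,ds=16rc^2$. Using $m\le 2\mu$, this yields $E[(X-m)^+]\le 2\sqrt{2\pi}\,c\sqrt{2rE[X]}+16c^2r$, with a similar (in fact smaller) bound for $E[(m-X)^+]$; since $2\sqrt{2\pi}<6$ I conclude $|\mu-m|\le 6c\sqrt{2rE[X]}+16c^2r=:D$.

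For the transfer step, fix $t>0$ and split $\{|X-\mu|>t+D\}$ into $\{X>\mu+t+D\}$ and $\{X<\mu-t-D\}$. On the first event, $X-m=(X-\mu)+(\mu-m)>(t+D)-|\mu-m|\ge t$, so the first part of Theorem~\ref{concentrationtheorem} gives $P(X>\mu+t+D)\le 2e^{-t^2/(4rc^2(m+t))}$, and since $m+t\le 2\mu+t\le 2(\mu+t)$ this is at most $2e^{-t^2/(8rc^2(E[X]+t))}$. Symmetrically, on the second event $X-m<-t$, and the second part of the theorem together with $m\le 2\mu\le 2(\mu+t)$ gives $P(X<\mu-t-D)\le 2e^{-t^2/(8rc^2(E[X]+t))}$. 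Adding the two estimates gives the claimed bound $4e^{-t^2/(8rc^2(E[X]+t))}$.

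I do not expect a genuine obstacle here: all the probabilistic content is already in Theorem~\ref{concentrationtheorem}. The only point requiring care is the bookkeeping of constants --- checking that the crude tail integrals fit inside the prescribed correction term $6c\sqrt{2rE[X]}+16c^2r$, and that the factor $2$ coming from $m\le 2E[X]$ ends up in the denominator (giving $8rc^2(E[X]+t)$ rather than, say, $4rc^2(E[X]+t)$). Alternatively, as the surrounding text indicates, one can simply invoke Lemma~4.6 of \cite{MC} for the median--mean comparison and carry out only the transfer step.
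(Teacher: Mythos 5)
Your proposal is correct, and it is exactly the standard median-to-mean transfer that the paper only cites (via Lemma~4.6 of \cite{MC} and \cite{lr}) rather than writing out: bound $\Med(X)\le 2E[X]$ using nonnegativity, bound $|E[X]-\Med(X)|$ by integrating the two tail estimates of Theorem~\ref{concentrationtheorem} (splitting the upper-tail integral at $s=\Med(X)$), and then shift the deviation window from the median to the mean, with the factor $2$ from $\Med(X)+t\le 2(E[X]+t)$ producing the $8rc^2$ denominator. Your constant bookkeeping checks out ($\sqrt{8\pi rc^2 m}\le 4c\sqrt{\pi rE[X]}\le 6c\sqrt{2rE[X]}$ and $\int_0^\infty 2e^{-s/(8rc^2)}\,ds=16rc^2$), so the argument is a complete and valid substitute for the citation.
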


We also need the following: 

\begin{theorem}[Azuma's Inequality \cite{a}] \label{azuma}
Let $X$ be a random variable determined by $n$ trials $T_1,T_2,...,T_n$ such that for each $i$, and any two sequences of outcomes $t_1,t_2,...,t_i$ and $t_1,t_2,...,t_{i-1},t_i'$, 
$$\Bigg|E \bigl[ X|T_1=t_1,T_2=t_2,...,T_i=t_i \bigr] - E\bigl[ X|T_1=t_1,T_2=t_2,...,T_{i-1}=t_{i-1},T_i=t_i' \bigr] \Bigg| \leq c_i.$$
Then $P(|X-E[X]|>t) \leq 2e^{-\frac{t^2}{2\sum_{i=1}^nc_i^2}}$.
\end{theorem}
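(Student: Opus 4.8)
The plan is to prove this by the standard three-step martingale recipe: build the Doob martingale associated with the trials $T_1,\dots,T_n$, bound the moment generating function of each of its increments with a Hoeffding-type convexity estimate, and then apply Markov's inequality with an optimally chosen exponential parameter.

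First I would set, for $i=0,1,\dots,n$, the random variable $Z_i := E[X\mid T_1,\dots,T_i]$, so that $Z_0=E[X]$ is constant and $Z_n=X$, and let $D_i:=Z_i-Z_{i-1}$. By the tower property $E[D_i\mid T_1,\dots,T_{i-1}]=0$, so $X-E[X]=\sum_{i=1}^n D_i$ is a sum of martingale differences. The hypothesis translates into the bound $|D_i|\le c_i$ almost surely: fixing outcomes $T_1=t_1,\dots,T_{i-1}=t_{i-1}$, the number $Z_{i-1}$ equals $E[Z_i\mid T_1=t_1,\dots,T_{i-1}=t_{i-1}]$, a weighted average over the outcomes of $T_i$ of the quantities $E[X\mid T_1=t_1,\dots,T_i=t_i]$, any two of which differ by at most $c_i$; hence $Z_{i-1}$ lies within $c_i$ of every attainable value of $Z_i$, so $|D_i|\le c_i$.

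Next I would establish the Hoeffding estimate: if $Y$ is a random variable with $E[Y]=0$ taking values in an interval of length $L$, then $E[e^{\lambda Y}]\le e^{\lambda^2 L^2/8}$ for every real $\lambda$. This follows by writing $e^{\lambda y}$, for $y$ in the interval $[a,b]$, as the convex combination $\frac{b-y}{b-a}e^{\lambda a}+\frac{y-a}{b-a}e^{\lambda b}$, taking expectations to kill the linear term, and checking that the resulting function $\phi(\lambda)$ satisfies $\phi(0)=\phi'(0)=0$ and $\phi''(\lambda)\le L^2/4$, so $\phi(\lambda)\le\lambda^2L^2/8$. Applying this conditionally on $T_1,\dots,T_{i-1}$ to $Y=D_i$, which has conditional mean $0$ and conditional range at most $2c_i$, gives $E[e^{\lambda D_i}\mid T_1,\dots,T_{i-1}]\le e^{\lambda^2 c_i^2/2}$. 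Peeling increments off one at a time from $i=n$ down to $i=1$ via the tower property,
$$E\big[e^{\lambda(X-E[X])}\big]=E\Big[e^{\lambda\sum_{i=1}^{n-1}D_i}\,E[e^{\lambda D_n}\mid T_1,\dots,T_{n-1}]\Big]\le e^{\lambda^2 c_n^2/2}\,E\big[e^{\lambda\sum_{i=1}^{n-1}D_i}\big],$$
and iterating yields $E[e^{\lambda(X-E[X])}]\le e^{\lambda^2\sum_{i=1}^n c_i^2/2}$. Markov's inequality then gives, for every $\lambda>0$, $P(X-E[X]>t)\le e^{-\lambda t+\lambda^2\sum_i c_i^2/2}$, and choosing $\lambda=t/\sum_i c_i^2$ produces $P(X-E[X]>t)\le e^{-t^2/(2\sum_i c_i^2)}$. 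Applying the same argument to $-X$, which satisfies the hypotheses with the same constants $c_i$, and taking a union bound gives the factor of $2$.

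The martingale bookkeeping and the optimization over $\lambda$ are routine; the one genuinely non-trivial ingredient is the Hoeffding convexity estimate bounding the moment generating function of a bounded mean-zero random variable, so I expect verifying the second-derivative bound $\phi''(\lambda)\le L^2/4$ to be the main piece of real work. Since in all applications in this paper each trial has only finitely many outcomes, the conditioning manipulations raise no measure-theoretic subtleties.
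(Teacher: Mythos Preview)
Your proof is correct and is the standard Doob martingale / Hoeffding's lemma argument for Azuma's inequality. Note, however, that the paper does not prove this theorem at all: it is stated in the preliminaries section with a citation to Azuma's original paper and is used as a black-box tool. So there is no ``paper's own proof'' to compare against; you have simply supplied the textbook derivation that the authors chose to omit.

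One minor sharpening you could mention: your argument actually shows that, conditioned on $T_1,\dots,T_{i-1}$, the increment $D_i$ lies in an interval of length $c_i$ (not merely $2c_i$), since $Z_{i-1}$ is a convex combination of the values $E[X\mid T_1=t_1,\dots,T_i=t_i]$ and hence lies in the same interval of diameter $c_i$ that contains all of them. Using $L=c_i$ rather than $L=2c_i$ in Hoeffding's lemma would therefore yield the tighter bound $2e^{-t^2/(\frac{1}{2}\sum c_i^2)}$, i.e.\ with an $8$ rather than a $2$ in the denominator. But the weaker form you derived is exactly what the theorem statement claims, so nothing is lost.
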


\section{The Proof of Theorem \ref{manycolours}} \label{sec:proof_manycolours}

We prove Theorem \ref{manycolours} in this section.

We set  $c=\lfloor 5\mu^{-1} \Delta \rfloor$ and consider the random process where we assign a random colour from $\{1,...,c\}$ to every vertex of $G$.
Let $g$ be the resulting $c$-colouring of $G$.
Then we uncolour each vertex which has the same colour as a neighbour. 
Let $f$ be the resulting partial $c$-colouring of $G$.

For every $v \in V(G)$, let $A_v$ be the event that the number of $(f,v)$-undisturbed vertices is strictly smaller than $\deg(v)-\mu\Delta+|N(v)-\dom(f)|$.
We note that this event $A_v$ is determined by vertices at distance at most two from $v$.
Hence $A_v$ is mutually independent of $\{A_w:w \in V(G)\}-{\cal S}_v$, where ${\cal S}_v$ consists of the events $A_w$ for vertices $w$ within distance at most four from $v$.
So if each $A_v$ has probability at most $\frac{1}{4\Delta^4}$ then we can apply the Local Lemma (Theorem \ref{lll}) to show that with positive probability none of our events occur and hence $G$ has a $\mu\Delta$-peaceful colouring by Observation \ref{greedy_obs}.

Let $v \in V(G)$.

Let $X$ be the number of those $w$ in $N(v)$ which are   assigned a colour by $g$ which is  
assigned to  another element of $N(v) \cup N(w)$.  
Note that the number of $(f,v)$-undisturbed vertices equals $\deg(v)-X$. 
Furthermore $X \ge |N(v)-\dom(f)|$.
If $A_v$ holds then  $\deg(v)-X < \deg(v)-\mu\Delta+X$. 
That is, $P(A_v) \leq P(\deg(v)-X < \deg(v)-\mu\Delta+X)$.
So it suffices to show that $P(X \ge \frac{\mu \Delta}{2}) \le  \frac{1}{4\Delta^4}$.
We show that this is true conditioned on any colouring of $V(G)-N(v)$, which yields the desired 
result. 

The probability that a neighbour $w$ of $v$ is assigned a colour assigned to another vertex
of $N(v) \cup N(w)$ given our conditioning is at most $\frac{|N(v) \cup N(w)|}{c}$. Since, $c>\frac{9\Delta}{2\mu}$,
this is less than $\frac{4\mu}{9}$. Hence $E[X] \le \frac{4\mu \deg(v)}{9} \le \frac{4\mu \Delta}{9}$.
So it suffices to show $P(|X-E[X]| > \frac{\mu \Delta}{18})<\frac{1}{4\Delta^4}$.

Note that  $X$ is determined by the trials that decide $(g(u): u \in N(v))$.
Changing the outcome of each trial can affect $X$ by at most 2, and to certify $X \geq s$, we only need to specify $2s$ trials. Now, for sufficiently large $\Delta$, $12\sqrt{4\frac{4\mu\Delta}{9}} +128<\frac{\mu\Delta}{36}$. 
So by taking $r=c=2$ in Corollary \ref{permutation}, for large $\Delta$ we have: 
    \begin{align*}
        P\left(|X-E[X]|>\frac{\mu\Delta}{18}\right) \leq & P\left(|X-E[X]|>\frac{\mu\Delta}{36}  + 12\sqrt{4E[X]} +128\right) \\
        \leq & 4e^{-\frac{(\mu\Delta/36)^2}{64(E[X]+(\mu\Delta/36)}} 
        \leq  4e^{-\frac{(\mu\Delta/36)^2}{64\cdot 17 \mu \Delta/36}}  \leq \frac{1}{8\Delta^4}.
    \end{align*}
This proves the theorem.

\section{The Proof of Theorem \ref{upperbound}} \label{sec:proof_upperbound}

We prove Theorem \ref{upperbound} in this section.
The key to the proof is the following:

\begin{lemma} \label{lem_upperbound_bipartite}
   There is a $\Delta_0$ such that for $\Delta \ge \Delta_0$ there is a graph $G$ with bipartition $(A,B)$ such that $\Delta(G)=\Delta$, the  maximum codegree of $G$ is at most  $\log^2 \Delta$, $|A|=\lfloor \frac{\Delta^2}{\log^{3/2} \Delta} \rfloor$, $|B|=\Delta$, every vertex of $B$ is adjacent to $\Delta$ vertices of $A$, and for every $S \subseteq A$ with $|S| \le \frac{\Delta}{\log^{5/4}\Delta}$, the number of vertices of $B$ which are adjacent to exactly one vertex of $S$ is at most $(e^{-1}+\frac{1}{3\log^{1/9}\Delta})|B|$.  
\end{lemma}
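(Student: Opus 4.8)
\emph{Proof idea.} The plan is to build $G$ at random and show that all of the listed properties hold simultaneously with positive probability. I would fix a set $B$ of $\Delta$ vertices and a set $A$ of $\lfloor \Delta^2/\log^{3/2}\Delta\rfloor$ vertices, and for each $b\in B$ independently let $N(b)$ be a uniformly random $\Delta$-element subset of $A$. Then every vertex of $B$ has degree exactly $\Delta$ with all neighbours in $A$, so $\Delta(G)=\Delta$ as long as no vertex of $A$ acquires degree more than $\Delta$; the degree of a fixed $a\in A$ is a sum of $\Delta$ independent indicators with mean $\Delta^2/|A|=\Theta(\log^{3/2}\Delta)\ll\Delta$, so a Chernoff bound and a union bound over $A$ handle this. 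For the codegree, if $b\ne b'$ in $B$ then $|N(b)\cap N(b')|$ is hypergeometric with mean $\Delta^2/|A|=\Theta(\log^{3/2}\Delta)=o(\log^2\Delta)$, while if $a\ne a'$ in $A$ then $|N(a)\cap N(a')|$ has mean $O(\log^3\Delta/\Delta)=o(1)$; Chernoff and a union bound over the $O(\Delta^2)$ pairs show the maximum codegree is at most $\log^2\Delta$ with probability $1-o(1)$.

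The heart of the matter is the last condition. Fix $S\subseteq A$ with $|S|=s\le s_{\max}:=\lfloor\Delta/\log^{5/4}\Delta\rfloor$ and let $X_S$ be the number of $b\in B$ with $|N(b)\cap S|=1$. Because the sets $N(b)$ are chosen independently, $X_S$ is a sum of $|B|=\Delta$ independent indicators. For a single $b$, $|N(b)\cap S|$ is hypergeometric, and I would write $\Pee(|N(b)\cap S|=1)=\Delta\cdot\frac{s}{|A|}\cdot\prod_{j=1}^{\Delta-1}\left(1-\frac{s-1}{|A|-j}\right)$; applying $1-x\le e^{-x}$ to each factor and setting $\lambda:=\Delta s/|A|$ bounds this by $\lambda e^{-\lambda}\cdot e^{(\Delta+s)/|A|}$. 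Since $(\Delta+s)/|A|=O(\log^{3/2}\Delta/\Delta)=o(1)$ and $\lambda e^{-\lambda}\le e^{-1}$ for every $\lambda\ge 0$, this gives $\Pee(|N(b)\cap S|=1)\le e^{-1}+o(1)$ uniformly over $S$, hence $\E[X_S]\le(e^{-1}+o(1))|B|$ with an $o(1)$ term of size $O(\log^{3/2}\Delta/\Delta)$, far below $\tfrac1{3\log^{1/9}\Delta}$.

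A Chernoff bound for the sum of independent indicators $X_S$ then gives $\Pee\!\left(X_S>(e^{-1}+\tfrac1{3\log^{1/9}\Delta})|B|\right)\le \exp(-\Omega(\Delta/\log^{2/9}\Delta))$, since the deviation from the mean to be overcome is $\Theta(\Delta/\log^{1/9}\Delta)$ and the Chernoff exponent is of order (deviation)$^2/$(mean)$=\Theta(\Delta/\log^{2/9}\Delta)$. On the other hand the number of sets $S$ with $|S|\le s_{\max}$ is at most $(s_{\max}+1)\binom{|A|}{s_{\max}}\le\exp(O(s_{\max}\log\Delta))=\exp(O(\Delta/\log^{1/4}\Delta))$. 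The arithmetic that makes the argument close is $2\cdot\tfrac19=\tfrac29<\tfrac14$: it guarantees that $\exp(-\Omega(\Delta/\log^{2/9}\Delta))$ beats $\exp(O(\Delta/\log^{1/4}\Delta))$ for all large $\Delta$, so a union bound over all candidate sets $S$ shows that with probability $1-o(1)$ every such $S$ satisfies the required inequality. Combining this with the earlier estimates, all the stated properties hold simultaneously with positive probability, so the desired graph $G$ exists.

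I expect the main obstacle to be exactly this last balancing act: one must check that the exponents ($3/2$ in $|A|$, $5/4$ in $s_{\max}$, and $1/9$ in the slack) are chosen so that simultaneously (i) the hypergeometric ``exactly one'' probability never exceeds $e^{-1}$ by more than the allotted slack, (ii) the hypergeometric correction term $(\Delta+s)/|A|$ is negligible, and (iii) the Chernoff exponent $\Delta/\log^{2/9}\Delta$ dominates the logarithm $\Delta/\log^{1/4}\Delta$ of the number of candidate sets $S$. Once those exponents are pinned down, each individual estimate is routine.
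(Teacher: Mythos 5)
Your proposal is correct and follows essentially the same route as the paper: the same random bipartite model (each $b\in B$ gets an independent uniform $\Delta$-subset of $A$), the same hypergeometric bound $\lambda e^{-\lambda}e^{(s+\Delta-1)/|A|}\le e^{-1}(1+o(1))$ for the ``exactly one'' probability, and the same Chernoff-versus-union-bound balance $\Delta/\log^{2/9}\Delta$ against $\Delta/\log^{1/4}\Delta$. The only difference is that you also explicitly check that no vertex of $A$ exceeds degree $\Delta$, a point the paper leaves implicit.
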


\begin{proof}    
Let $A$ and $B$ be two disjoint sets with $|A|=\lfloor \frac{\Delta^2}{\log^{3/2} \Delta} \rfloor$ and $|B|=\Delta$.
We consider the random graph obtained by having each vertex of $B$ adjacent to a random subset of $A$ of size $\Delta$. 
The expected codegree of two vertices of $B$ is at most $|A| \cdot \frac{\Delta^2}{|A|^2} \leq \log^{3/2} \Delta$ and of two vertices of $A$ is at most $(\frac{\Delta(\Delta-1)}{|A|(|A|-1)})^2|B|<1$. 
Simple counting shows that the probability that the codegree exceeds $\log^2 \Delta$ is $o(1)$.

For any subset $Z$ of $A$ of size  at most $\frac{\Delta}{\log^{5/4} \Delta}$,
since the function $xe^{-x}$ attains maximum at $x=1$, the probability that a vertex in $B$ is adjacent to exactly one vertex of $Z$ is no more than  
$$\frac{|Z|\Delta}{|A|}(1-\frac{|Z|-1}{|A|})^{\Delta-1}\le \frac{|Z|\Delta}{|A|}e^{-\frac{|Z|-1}{|A|}(\Delta-1)} = \frac{|Z|\Delta}{|A|}e^\frac{-|Z|\Delta}{|A|}e^\frac{|Z|+\Delta-1}{|A|}\le e^{-1}e^\frac{2\log^{3/2}\Delta}{\Delta}.$$

For large enough $\Delta$ this is less than $e^{-1}(1+\frac{2\log^{3/2}\Delta}{\Delta}) = e^{-1} + \frac{2\log^{3/2}\Delta}{e\Delta}$. 
So, the expectation of the random number  $X_Z$ of vertices of $B$ adjacent to exactly one vertex of $Z$ is at most $(e^{-1} + \frac{2\log^{3/2}\Delta}{e\Delta})|B| \leq (e^{-1} + \frac{1}{6\log^{1/9}\Delta})|B|$.  
By the Chernoff Bound,
\begin{align*}
    P\left(X_Z > (e^{-1} + \frac{1}{3\log^{1/9}\Delta})|B|\right) \leq & P\left(|X_z-E[X_z]| >\frac{1}{6\log^{1/9}\Delta}|B|\right) \\
    \leq & e^{-\frac{(\frac{1}{6\log^{1/9}\Delta}|B|)^2}{3E[X_z]}} \\
    \leq & e^{-\frac{(\frac{1}{6\log^{1/9}\Delta}|B|)^2}{3|B|}} \\
    \leq & e^{-\frac{\Delta}{108\log^{2/9}\Delta}}.
\end{align*}

Note that there are at most 
\begin{align*}
    \sum_{i=1}^{\lfloor \frac{\Delta}{\log^{5/4} \Delta} \rfloor}{|A| \choose i} \leq \frac{\Delta}{\log^{5/4} \Delta} \cdot {|A| \choose \lfloor \frac{\Delta}{\log^{5/4} \Delta} \rfloor} \leq & \frac{\Delta}{\log^{5/4} \Delta} \cdot \frac{|A|^{\lfloor \frac{\Delta}{\log^{5/4} \Delta} \rfloor}}{\lfloor \frac{\Delta}{\log^{5/4} \Delta} \rfloor!} \\
    \leq & \frac{\Delta}{\log^{5/4} \Delta} \cdot (\frac{e|A|}{\lfloor \frac{\Delta}{\log^{5/4} \Delta} \rfloor})^{\lfloor \frac{\Delta}{\log^{5/4} \Delta} \rfloor} \\ \leq & \frac{\Delta}{\log^{5/4} \Delta} \cdot (\frac{e\frac{\Delta^2}{\log^{3/2}\Delta}}{\lfloor \frac{\Delta}{\log^{5/4} \Delta} \rfloor})^{\lfloor \frac{\Delta}{\log^{5/4} \Delta} \rfloor} \\
    \leq & (2e\frac{\Delta}{\log^{1/4}\Delta})^{\frac{\Delta}{\log^{5/4} \Delta}} \leq \Delta^{\frac{\Delta}{\log^{5/4} \Delta}} \\
\end{align*}
subsets $S$ of $A$ of size at most $\frac{\Delta}{\log^{5/4} \Delta}$.
So the probability that there is a subset $S$ of $A$ of size at most $\frac{\Delta}{\log^{5/4} \Delta}$ such that there are more than $(e^{-1} + \frac{1}{3\log^{1/9}\Delta})|B|$ vertices in $B$ adjacent to exactly one vertex in $S$ is at most 
$$\Delta^{\frac{\Delta}{\log^{5/4} \Delta}} \cdot e^{-\frac{\Delta}{108\log^{2/9}\Delta}} \leq e^{\frac{\Delta}{\log^{1/4}\Delta} - \frac{\Delta}{108\log^{2/9}\Delta}} = o(1).$$
\end{proof}

For a given $\Delta$, let $G$ be the  graph guaranteed to exist by Lemma \ref{lem_upperbound_bipartite}. 
Let $\phi$ be an arbitrary $(\Delta+1)$-colouring of $G$.
For every $b \in B$, let $c_b$ be the number of colour classes $C$ such that $|N(b) \cap C|=1$.
Let $M= \Sigma_bc_b$. 

There are at most $\frac{|A|\log^{5/4} \Delta}{\Delta} \le  \frac{\Delta}{\log^{1/4} \Delta}$ colour classes containing more than $\frac{\Delta}{\log^{5/4}\Delta}$ vertices in $A$. 
So the colour classes containing more than $\frac{\Delta}{\log^{5/4}\Delta}$ vertices in $A$ contribute at most $|B| \cdot \frac{\Delta}{\log^{1/4} \Delta}$ to $M$ in total.
By Lemma \ref{lem_upperbound_bipartite}, each colour class containing at most $\frac{\Delta}{\log^{5/4}\Delta}$ vertices in $A$ contributes at most $(e^{-1}+\frac{1}{3\log^{1/9}\Delta})|B|$ to $M$.
Therefore, $M \leq |B| \cdot \frac{\Delta}{\log^{1/4} \Delta} + (\Delta+1) \cdot (e^{-1}+\frac{1}{3\log^{1/9}\Delta})|B| \leq (e^{-1}+\frac{2}{3\log^{1/9}\Delta})(\Delta+1)|B|$.
It follows that $c_b \leq (e^{-1}+\frac{2}{3\log^{1/9}\Delta})(\Delta+1)$ for some $b \in B$. 
Since $b$ is adjacent to $\Delta$ vertices in $A$, there are $\deg(b)-c_b \geq \Delta-(e^{-1}+\frac{2}{3\log^{1/9}\Delta})(\Delta+1)>(1-e^{-1}-\frac{1}{\log^{1/9}\Delta})\Delta$ neighbours of $b$ using colours the same as other neighbours of $b$.
So $\phi$ is not $(1-e^{-1}-\frac{1}{\log^{1/9}\Delta})\Delta$-peaceful.
This proves Theorem \ref{upperbound}. 

\section{The Proof of Theorem \ref{lowerbound}} \label{sec:proof_lowerbound}

We prove Theorem \ref{lowerbound} in this section.
We recall that we need only prove the theorem for $\Delta$-regular graphs.
We set $\epsilon=\frac{1}{8001}$.

To motivate our approach we sketch how a proof might look for 
those $G$ with   $\Delta+1-\lceil 20\epsilon \Delta \rceil$ colourings.  
We simply apply the Lovasz Local Lemma to show that we can choose a subset $Z$ of $V(G)$ such that 
every vertex of $G$  
has between   $4 \epsilon \Delta$ and  $\frac{9\epsilon \Delta}{2}$ neighbours in $Z$. 
We next colour $G-Z$ using 
$\Delta+1-\lceil 20\epsilon \Delta \rceil $ colours. We then mimic the proof 
of Theorem \ref{manycolours} to colour $G[Z]$ using a new set  of $\lceil 20\epsilon \Delta \rceil$ colours
so that every vertex $v$  of $G$ of degree at least $\Delta-\epsilon \Delta$
has $\epsilon \Delta$ neighbours in $Z$ whose colour appears
nowhere else on $N(v)$. We would be able to do so   because in a random assignment of the new colours 
to $Z$, we expect  that more than $ \frac{31 \epsilon\Delta}{10}$ neighbours of 
$v$ will  be assigned a colour assigned to no other vertex of  $N(v)$  and that 
at most  $\frac{41\epsilon \Delta}{40}$ neighbours of $v$  will be uncoloured.

Indeed, this approach can be made to  work provided we can choose $Z$ so that $G-Z$ 
has a $\Delta+1-\lceil 20\epsilon \Delta \rceil$  colouring (while still insisting every vertex   has 
between $4 \epsilon \Delta$ and  $\frac{9\epsilon \Delta}{2}$ neighbours in $Z$). 

One obstruction to this approach is the existence of large cliques. 
We would  need to ensure that $Z$ contains at least $k$
vertices of any $\Delta+1-\lceil 20\epsilon\Delta \rceil+k$ clique. But this  could force us to violate
the condition that every vertex has at most $\frac{9 \epsilon \Delta}{2}$ neighbours in $Z$, as $k$ may be as large as $\lceil 20\epsilon \Delta \rceil$  and
hence $Z$ would have to have  minimum degree this large. 

So, we have to modify our approach to take into account the existence of large cliques. 
In fact, large cliques are helpful for producing a peaceful colouring because every vertex in a large clique must have many neighbours using distinct colours, as we saw in the proof of Observation \ref{greedy_obs}. 

We will still find a set $Z$ of vertices such that $G-Z$ has a proper $(\Delta+1-\lceil 20\epsilon \Delta\rceil)$-colouring, colour $G-Z$ using at most this 
many colours and then partially colour $Z$ so that however we complete it, we have an $(1-\epsilon) \Delta$-peaceful colouring.

\begin{definition} \label{useful_Z_def}
  {\rm  A subset $Z$ of $V(G)$ is {\it usable} if it satisfies the following:
  \begin{enumerate} 
  \item[(i)] $G-Z$ has a proper $(\Delta+1-\lceil 20\epsilon\Delta \rceil)$-colouring, 
  \item[(ii)] $Z$ can be partitioned into sets $K_1,...,K_\ell,Y$ (for some nonnegative integer $\ell$), where each $K_i$ is a subset of a clique $C_i$ of size at least $\frac{2\Delta}{3}+1$ with $|K_i|=\lceil 20\epsilon\Delta \rceil$ and the $C_i$ disjoint, 
  \item[(iii)] For every $v$ in a $K_j$, $|N(v) \cap (Z-K_j)| \le \frac{20\epsilon \Delta}{9}$, 
  \item[(iv)] For every $v$ which is in $Y$ or is not in a clique of size exceeding $\frac {2\Delta}{3}+1$, $|N(v) \cap Z| \le \frac{200\epsilon \Delta}{9}$, and $|N(v) \cap Y| \le 2\epsilon \Delta$, 
  \item[(v)]  For every $v \in V(G)$ which is not in a clique of size exceeding $\frac{2\Delta}{3}+1$,  
  we have $\frac{3\epsilon \Delta}{2} \le  |N(v) \cap Z|$. 
  \end{enumerate}}
\end{definition}

Theorem \ref{lowerbound} immediately follows from the combination of the following two lemmas.

\begin{lemma}
\label{Zexists}
  There is a usable subset $Z$ of $V(G)$ 
\end{lemma}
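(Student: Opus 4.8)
The plan is to construct the usable set $Z$ via a two-stage procedure: first treat the large cliques deterministically, then handle the rest of the graph probabilistically using the Lov\'asz Local Lemma. Before anything else, I would fix a maximal collection of pairwise disjoint cliques $C_1,\dots,C_\ell$, each of size at least $\frac{2\Delta}{3}+1$; this is well-defined (take them greedily), and any clique of size exceeding $\frac{2\Delta}{3}+1$ meets some $C_i$ in more than $\frac{\Delta}{3}$ vertices, since two such cliques sharing a vertex would otherwise have been combinable. For each $C_i$, choose $K_i\subseteq C_i$ with $|K_i|=\lceil 20\epsilon\Delta\rceil$; I have freedom in which $\lceil 20\epsilon\Delta\rceil$ vertices to pick, and I will use that freedom (plus the probabilistic choice of $Y$ below) to meet conditions (iii) and (v). The remaining vertices $V(G)\setminus\bigcup_i C_i$ (together with the unchosen clique vertices $C_i\setminus K_i$) are candidates for $Y$: I put each such vertex into $Y$ independently with probability $q=3\epsilon$, say (a constant to be tuned so the expected value $q\Delta$ sits comfortably between the bounds $\frac{3\epsilon\Delta}{2}$ and $2\epsilon\Delta$ — note $2\epsilon\Delta < 3\epsilon\Delta$, so the relevant comparison in (iv) is against $\frac{200\epsilon\Delta}{9}$, while (iv)'s "$|N(v)\cap Y|\le 2\epsilon\Delta$" forces $q<2\epsilon$; I will take $q$ slightly below $2\epsilon$ and slightly above $\frac{3\epsilon}{2}$, and a short calculation confirms such a $q$ exists).

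The core of the argument is the Local Lemma application. For each vertex $v$ I define a bad event: for $v$ not in a large clique, the bad event is that $|N(v)\cap Z|$ falls outside $[\frac{3\epsilon\Delta}{2},\,\frac{200\epsilon\Delta}{9}]$ or $|N(v)\cap Y|>2\epsilon\Delta$; for $v\in K_j$, the bad event is $|N(v)\cap(Z\setminus K_j)|>\frac{20\epsilon\Delta}{9}$. Here $N(v)\cap Z$ decomposes into the contribution from $Y$ (a sum of independent indicators, controlled by Chernoff) plus the contribution from $\bigcup_i K_i$, which is deterministic once the $K_i$ are chosen. Since each vertex has at most $\Delta$ neighbours and hence meets at most a bounded number of the disjoint cliques $C_i$, I can argue that the deterministic $K_i$-contribution to any neighbourhood is small — this is where the disjointness of the $C_i$ and the clique sizes $\ge\frac{2\Delta}{3}+1$ are essential, since each vertex has fewer than $\frac{3\Delta}{2\Delta/3}<3$, i.e.\ at most $2$ such cliques in its neighbourhood, contributing at most $2\lceil 20\epsilon\Delta\rceil\le\frac{41\epsilon\Delta}{10}$, comfortably under $\frac{200\epsilon\Delta}{9}$. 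Each bad event depends only on the choices of vertices within distance $2$ of $v$, so it is mutually independent of all but $O(\Delta^4)$ other bad events; Chernoff gives each bad event probability $e^{-\Omega(\Delta)}$, which beats $\frac{1}{4}\cdot O(\Delta^4)^{-1}$ for large $\Delta$, so the Local Lemma yields a choice of $Y$ satisfying (ii)--(v).

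Condition (i) — that $G-Z$ has a proper $(\Delta+1-\lceil 20\epsilon\Delta\rceil)$-colouring — is the part requiring genuine structural input rather than just concentration, and I expect it to be the main obstacle. The point is that every vertex of $G-Z$ that is \emph{not} in a large clique loses at least $\frac{3\epsilon\Delta}{2}$ neighbours to $Z$ by (v), so in $G-Z$ it has degree at most $\Delta-\frac{3\epsilon\Delta}{2}$; but vertices \emph{in} the large cliques $C_i$ do not automatically lose many neighbours (only $\lceil 20\epsilon\Delta\rceil$ of them go into $K_i$, which is exactly the deficit we need, so the margin is tight). I would handle $G-Z$ by colouring each $C_i\setminus K_i$ — a clique of size at most $\Delta+1-\lceil 20\epsilon\Delta\rceil$ — with distinct colours, and colouring the rest greedily or via Brooks-type / degeneracy reasoning, checking that the degree bound $\Delta-\frac{3\epsilon\Delta}{2}$ for non-clique vertices and the reduced clique size for the $C_i$ together keep us within $\Delta+1-\lceil 20\epsilon\Delta\rceil$ colours; a clean way is to observe that removing $\bigcup_i K_i$ from $G$ already drops every large clique to size $\le\Delta+1-\lceil 20\epsilon\Delta\rceil$ and every other vertex to degree $\le\Delta-\frac{3\epsilon\Delta}{2}<\Delta+1-\lceil 20\epsilon\Delta\rceil$ (using $20\epsilon<\frac{3\epsilon}{2}$ is false, so I must be more careful here — in fact I will need to also remove enough of $Y$, or argue via the fact that a graph with no clique larger than $k$ and bounded degree has chromatic number controlled by $\omega$ in the relevant regime), so that $G-Z$ is $(\Delta+1-\lceil 20\epsilon\Delta\rceil)$-colourable. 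The delicate bookkeeping reconciling "$Z$ removes enough to make $G-Z$ colourable" with "$Z$ does not remove too much" (conditions (iii)--(v)) is precisely what the constant $\epsilon=\frac{1}{8001}$ is chosen to make work, and verifying all the inequalities simultaneously is the technical heart of the lemma.
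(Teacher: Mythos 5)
Your proposal has two genuine gaps, and the first is fatal to the whole construction. Condition (i) is not an afterthought to be ``handled by Brooks-type reasoning'': it is the heart of the lemma, and a maximal disjoint family of large cliques cannot deliver it. If $Q$ is a clique of size $\Delta+1$ not in your family, maximality only forces $Q$ to intersect some $C_i$ (in at least $\frac{\Delta}{3}$ vertices, say), but $K_i$ is a set of merely $\lceil 20\epsilon\Delta\rceil$ vertices of $C_i$ and may miss $Q$ entirely; then $Q-Z$ is still a clique of size close to $\Delta+1$ and $G-Z$ is not $(\Delta+1-\lceil 20\epsilon\Delta\rceil)$-colourable. Moreover, as you yourself observe, a non-clique vertex loses only about $\frac{3\epsilon\Delta}{2}$ neighbours to $Z$, which leaves its degree far above $\Delta-20\epsilon\Delta$, so greedy colouring fails, and there is no off-the-shelf ``$\chi$ controlled by $\omega$'' theorem that rescues this. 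The paper instead starts from a $d$-dense decomposition (Lemma 15.2 of Molloy--Reed) into dense sets $D_i$ and a set $S$ of $d$-sparse vertices, colours $S$ with $\Delta+1-\lceil 20\epsilon\Delta\rceil$ colours via the Bonamy--Perrett--Postle theorem on graphs with sparse neighbourhoods, and proves two extension lemmas (via ``suitable colourings'') showing that every dense set --- whether or not it contains a huge clique --- can absorb the remaining colours once the $K_i$ are deleted. All three ingredients are structural inputs your sketch lacks.

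The second gap is in your Local Lemma step. You fix the $K_i$ deterministically and bound their contribution to $|N(v)\cap Z|$ by claiming $N(v)$ meets at most two of the cliques $C_i$. That is false: the $C_i$ are disjoint but may each intersect $N(v)$ in a single vertex, so $N(v)$ can meet up to $\Delta$ of them; and even your own count is arithmetically wrong, since $2\lceil 20\epsilon\Delta\rceil\approx 40\epsilon\Delta$ exceeds $\frac{200\epsilon\Delta}{9}\approx 22.2\epsilon\Delta$ (it is not $\frac{41\epsilon\Delta}{10}$), and a single $K_j$ with $20\epsilon\Delta=\frac{180\epsilon\Delta}{9}$ vertices in $N(v)\setminus K_i$ already violates (iii). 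The paper resolves this by choosing each $K_i$ as a uniformly random $\lceil 20\epsilon\Delta\rceil$-subset of $C_i$ (via a random permutation), so that each neighbour of $v$ lands in $\bigcup_i K_i$ with probability roughly $\frac{\lceil 20\epsilon\Delta\rceil}{|C_i|}<\frac{1}{360}$; conditions (iii)--(v) then follow from concentration for permutations (Corollary \ref{permutation}) plus the Local Lemma. Your treatment of $Y$ and of the dependency structure is broadly in the right spirit, but without randomising the $K_i$ and without the dense-decomposition machinery for (i), the argument does not go through.
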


\begin{lemma}
\label{Zisgood}
  If there is  a usable subset $Z$ of $V(G)$ then $G$ is $(1-\epsilon)\Delta$-peaceful.  
\end{lemma}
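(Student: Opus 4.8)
The plan is to fix a usable set $Z$ with its decomposition $K_1,\dots,K_\ell,Y$ and the underlying cliques $C_1,\dots,C_\ell$, take the guaranteed proper $(\Delta+1-\lceil 20\epsilon\Delta\rceil)$-colouring of $G-Z$ given by (i), and then build a partial colouring of $Z$ using a new palette of $\lceil 20\epsilon\Delta\rceil$ colours so that the hypothesis of Observation \ref{greedy_obs} holds with $p=(1-\epsilon)\Delta$; note $\Delta+1-\frac{p}{2}=\Delta+1-\frac{(1-\epsilon)\Delta}{2}$ is a bit more than $\frac{\Delta}{2}$, so vertices in a clique of size exceeding $\frac{2\Delta}{3}+1$ are automatically handled by the observation and we only need to worry about vertices that are in $Y$ or in no such large clique. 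On each $K_i$, which sits inside a clique $C_i$ of size $\ge \frac{2\Delta}{3}+1$, we simply give the $|K_i|=\lceil 20\epsilon\Delta\rceil$ vertices the $\lceil 20\epsilon\Delta\rceil$ new colours bijectively; this is a proper colouring there, and since each $v\in K_j$ has at most $\frac{20\epsilon\Delta}{9}$ neighbours in $Z-K_j$ by (iii), at most that many of its new colours can be repeated on another neighbour, which easily covers the $(1-\epsilon)\Delta$-peaceful demand on those vertices (in fact most of them are already covered by being in $C_i$).

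The substance is the colouring of $Y$. Here I would mimic the proof of Theorem \ref{manycolours}: assign each vertex of $Y$ an independent uniformly random colour from the new palette of $\lceil 20\epsilon\Delta\rceil\ge 20\epsilon\Delta$ colours, then uncolour any vertex of $Y$ receiving a colour already on one of its neighbours (inside $Z$ — the new colours are disjoint from those on $G-Z$, so conflicts are only with $Z$). For each vertex $v$ that is in $Y$ or in no large clique, let $X_v$ count the neighbours $w\in N(v)\cap Y$ that are assigned a new colour appearing on some other vertex of $N(v)\cup N(w)$ within $Z$; as in Section \ref{sec:proof_manycolours}, the number of $(f',v)$-undisturbed neighbours of $v$ among $N(v)\cap Y$ is $|N(v)\cap Y|-X_v$, and the number of uncoloured vertices in $N(v)\cap Y$ is at most $X_v$. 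We want $U_v-|N(v)-\dom(f)|\ge \deg(v)-(1-\epsilon)\Delta=\epsilon\Delta$ (using $\Delta$-regularity). The colours on $N(v)\setminus Y$ are all from the old palette so they cause no loss against the new colours, hence it suffices that $|N(v)\cap Y|-2X_v\ge \epsilon\Delta$; but wait — we must be careful, a neighbour in some $K_j$ also carries a new colour and could clash with a $Y$-neighbour of $v$, so $X_v$ should really count clashes against all of $N(v)\cap Z$. The probability a given $w\in N(v)\cap Y$ clashes is at most $|N(v)\cap Z|+|N(w)\cap Z|$ over $20\epsilon\Delta$, which by (iv) applied to both $v$ and $w$ (and (iii) for the $N(w)$ part when $w$'s neighbours lie in various $K_j$'s — one checks the same $O(\epsilon\Delta)$ bound holds) is at most a constant fraction; choosing the constants in the definition so that $\E[X_v]\le \frac{1}{8}|N(v)\cap Y|$ or so, and using $|N(v)\cap Y|\le 2\epsilon\Delta$ together with $|N(v)\cap Z|\ge\frac{3\epsilon\Delta}{2}$ from (v), we would get a comfortable gap. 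Then concentration via Corollary \ref{permutation} (with $r=c=2$, exactly as in Theorem \ref{manycolours}), applied conditionally on all colours outside $N(v)\cap Y$, shows $P(X_v$ exceeds its mean by a small multiple of $\epsilon\Delta)\le \frac{1}{4\Delta^4}$; since $X_v$ depends only on colours within distance $2$ of $v$, the Local Lemma (Lemma \ref{lll}) finishes it with the dependency degree $\Delta^4$. Finally invoke Observation \ref{greedy_obs}.

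The main obstacle I expect is getting the arithmetic of the constants to actually close: one must verify that with $\epsilon=\frac{1}{8001}$ and the specific bounds $\frac{3\epsilon\Delta}{2}\le |N(v)\cap Z|$, $|N(v)\cap Z|\le\frac{200\epsilon\Delta}{9}$, $|N(v)\cap Y|\le 2\epsilon\Delta$, and $|N(v)\cap(Z-K_j)|\le\frac{20\epsilon\Delta}{9}$, the expected clash count $\E[X_v]$ (roughly $\frac{|N(v)\cap Y|\cdot(\text{codeg-type terms})}{20\epsilon\Delta}$) leaves at least $\epsilon\Delta$ undisturbed neighbours after doubling for uncolourings — and that the concentration deviation one can afford, again a fixed fraction of $\epsilon\Delta$, still beats the $\frac{1}{4\Delta^4}$ threshold for large $\Delta$. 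There is also a bookkeeping subtlety: a vertex $v$ sitting in a large clique $C_i$ but with $v\notin Z$ need not satisfy any of (iii)–(v), yet it is handled directly by Observation \ref{greedy_obs} since $|C_i|\ge\frac{2\Delta}{3}+1>\Delta+1-\frac{(1-\epsilon)\Delta}{2}$; one should state this clearly so the case analysis in applying the observation is exhaustive. None of these steps is deep, but the constant-chasing is where the real work lies, which is presumably why the authors chose such generous (non-optimal) constants.
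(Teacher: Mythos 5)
Your high-level plan matches the paper's (old palette on $G-Z$, a fresh palette of $\lceil 20\epsilon\Delta\rceil$ colours on $Z$, uncolour clashes, Local Lemma, then Observation \ref{greedy_obs}), but there is a genuine gap in where the undisturbed neighbours come from. For a vertex $v$ in no large clique, the definition of usable guarantees only $|N(v)\cap Z|\ge\frac{3\epsilon\Delta}{2}$; it gives no lower bound on $|N(v)\cap Y|$, which may well be empty, with all of $N(v)\cap Z$ sitting inside the sets $K_j$. Your sufficiency condition $|N(v)\cap Y|-2X_v\ge\epsilon\Delta$ therefore cannot hold in general, and the $\epsilon\Delta$ undisturbed neighbours must be extracted from $N(v)\cap\bigcup_j K_j$ as well. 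This is exactly why your choice to colour each $K_i$ by a \emph{deterministic} bijection fails: if $v$ has many neighbours in two different cliques $K_j$ and $K_{j'}$, the fixed bijections can make the colour sets on $N(v)\cap K_j$ and $N(v)\cap K_{j'}$ coincide, leaving none of these vertices undisturbed. The paper instead assigns the colours of each $K_i$ via a uniformly random permutation, so every vertex of $Z$ receives a uniformly random new colour, and then runs the expectation computation over all of $N(v)\cap Z$ (via the number $c(v)$ of distinct colours on $N(v)\cap Z$ and a quadratic inequality in $|N(v)\cap Z|/(\epsilon\Delta)$ over the range $[\frac32,\frac{200}{9}]$). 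A related defect: your uncolouring rule only removes colours from $Y$-vertices, so a monochromatic edge between two different $K_j$'s is never repaired and the final colouring need not be proper; the paper uncolours the $K$-endpoints of any such edge.

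Two smaller but real issues. First, the concentration step cannot be run with $r=c=2$: changing the colour of a single vertex $u\in N(v)\cap Z$ can create or destroy clashes for every other neighbour of $v$ carrying $u$'s old or new colour, so the Lipschitz constant is the maximum colour multiplicity on $N(v)\cap Z$, not $2$. The paper handles this by splitting off the set $S_v''$ of vertices whose colour appears at least $\lceil\log^2\Delta\rceil$ times on $N(v)\cap Z$ (shown to be negligible in expectation) and applying Corollary \ref{permutation} with $c=\lceil\log^2\Delta\rceil$. Second, your bookkeeping observation about vertices in large cliques outside $Z$ being absorbed by Observation \ref{greedy_obs} is correct and is indeed how the paper disposes of them, but the remaining case analysis must be driven by $N(v)\cap Z$, not $N(v)\cap Y$.
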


We will prove these two lemmas in the following two subsections.

\subsection{Proof of Lemma \ref{Zexists}}

Our starting point is a  structural decomposition 
which allows us to partition a graph of maximum degree $\Delta$ 
into a set $S$ of sparse vertices and a set of small dense subgraphs, such that 
every clique of size near $\Delta$ is contained in one of the dense subgraphs. 

We say that a vertex $v$ of $G$ is {\it $d$-sparse} if $|E(G[N(v)])|< {\Delta \choose 2}-d\Delta$; otherwise, $v$ is {\it $d$-dense}.
By a {\it $d$-dense decomposition} of $G$, we mean a partition of $V(G)$ into {\it dense sets} $D_1,...,D_\ell$ (for some nonnegative integer $\ell$) and a set $S$ such that 
\begin{enumerate}
    \item[(a)] every $D_i$ has between $\Delta+1-8d$ and $\Delta+4d$ vertices,
    \item[(b)] a vertex is adjacent to at least $\frac{3\Delta}{4}$ vertices of $D_i$ if and only if it is in $D_i$, and
    \item[(c)] every vertex of $S$ is $d$-sparse.
\end{enumerate}

By Lemma 15.2 of \cite{mr_book}, $G$ has a $d$-dense decomposition for every integer $d \le \frac{\Delta}{100}$. 

We take a $d$-dense decomposition for $d=4\lceil 20 \epsilon \Delta \rceil<\frac{\Delta}{100}$.

We call a colouring of a $D_i$ {\it suitable} if 
    \begin{itemize}
        \item it is a proper colouring of $G[D_i]$ using at most $\Delta+1$ colours, 
        \item the vertices in the colour classes of size 1 form a clique, 
        \item there is no colour class of size exceeding three, and 
        \item if there is a colour class of size three, then the number of colours is $\Delta+1$, and every vertex in a colour class of size three is adjacent to every vertex forming a singleton colour class.
    \end{itemize}

\begin{lemma}
Every $D_i$ has a suitable colouring. 
\end{lemma}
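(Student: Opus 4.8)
We must show that every dense set $D_i$ in our $d$-dense decomposition (with $d = 4\lceil 20\epsilon\Delta\rceil$) admits a suitable colouring: a proper colouring of $G[D_i]$ with at most $\Delta+1$ colours in which the singleton colour classes form a clique, no colour class has size more than three, and any size-three class can only appear when $\Delta+1$ colours are used and its vertices are complete to every singleton class. The point is that a dense set is ``almost'' a clique — it has roughly $\binom{\Delta}{2}$ edges inside each neighbourhood — so $G[D_i]$ has very few non-edges, and we want a colouring whose structure mirrors that of the clique case (where every colour class is a singleton) except on the cheap non-edge structure.

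**Plan.** I would first quantify the non-edge structure of $D_i$. Since $|D_i| \le \Delta + 4d$ and each vertex of $D_i$ is adjacent to at least $\tfrac{3\Delta}{4}$ vertices of $D_i$, and more usefully since membership in $D_i$ forces a vertex to be $d$-dense-ish, one shows the complement graph $H := \overline{G[D_i]}$ has bounded maximum degree: a vertex $v \in D_i$ has at most $O(d)$ non-neighbours inside $D_i$. (Indeed $v$ has $< \binom{\Delta}{2} - $ very few non-edges in $N(v)$ only if $v$ is dense; combining $|D_i| \le \Delta+4d$ with the edge count in $N(v)\cap D_i$ gives $\deg_H(v) = O(d) = O(\epsilon\Delta)$, comfortably below, say, $\tfrac{\Delta}{100}$.) Then I would take a maximum matching $M$ in $H$; its edges are disjoint non-edges of $G[D_i]$, so the two endpoints of each matching edge can share a colour. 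Colour each matched pair with its own colour, and give every unmatched vertex of $D_i$ its own colour. Because $M$ is a maximum matching, the unmatched vertices form an independent set in $H$, i.e.\ a clique in $G[D_i]$ — so the singleton classes form a clique, as required. The number of colours used is $|D_i| - |M| \le \Delta + 4d < \Delta+1$ unless $|M|$ is small; if we used more than $\Delta+1$ colours (possible only when $|M| < 4d$, roughly, and $|D_i|$ is near its upper bound), we would need a second idea — see below. When at most $\Delta+1$ colours are used and all classes have size $\le 2$, the remaining conditions (on size-three classes, on completeness to singletons) are vacuous, so we are done in that case.

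**The size/merging issue.** The real work is ensuring we never exceed $\Delta+1$ colours, and handling the case where we must introduce size-three classes. If $|D_i| > \Delta+1$ we must merge: after the matching step we have $|D_i| - |M|$ classes, and need to reduce this to $\Delta+1$ by absorbing $|D_i| - |M| - (\Delta+1)$ singleton vertices into existing classes, turning some size-two classes into size-three classes. To absorb a singleton $u$ into a matched pair $\{x,y\}$ we need $u$ non-adjacent to both $x$ and $y$; and the ``completeness to singletons'' condition then forces us to be careful about which vertices remain singletons. I would argue this is possible by a greedy / counting argument: the number of singletons we must eliminate is at most $|D_i| - (\Delta+1) \le 4d = O(\epsilon\Delta)$, the complement $H$ still has maximum degree $O(\epsilon\Delta)$, and a vertex in a triple must avoid only $O(\epsilon\Delta)$ singletons in $G$ — but there are $\ge \Delta - O(\epsilon\Delta)$ singletons, so almost all of them are non-neighbours — wait, we need the reverse: a triple vertex must be \emph{adjacent} to every singleton. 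So I would instead select the set of surviving singletons to be a large clique $Q$ in $G[D_i]$ avoiding (in $H$) the handful of vertices destined for triples; since $H$ has small maximum degree, a maximum independent set of $H$ has size $\ge |D_i| - |M| \ge \Delta + 1 - O(\epsilon\Delta)$, and we can prune it down to exactly the right size while keeping it complete to the $O(\epsilon\Delta)$ triple-vertices — each triple vertex kills only $O(\epsilon\Delta)$ candidates, and $O(\epsilon\Delta)\cdot O(\epsilon\Delta) \ll \Delta$. I expect this bookkeeping — choosing the matching, the merges, and the surviving clique simultaneously so that all four bullet conditions hold with exactly $\le \Delta+1$ colours — to be the main obstacle; everything else reduces to the single clean fact that $\overline{G[D_i]}$ has maximum degree $O(\epsilon\Delta)$, which follows directly from the definition of a $d$-dense decomposition with $d = 4\lceil 20\epsilon\Delta\rceil$.
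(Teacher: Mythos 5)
Your construction hinges on the claim that $H=\overline{G[D_i]}$ has maximum degree $O(d)=O(\epsilon\Delta)$, and this is where the argument breaks. The definition of a $d$-dense decomposition used in the paper guarantees only that each vertex of $D_i$ has at least $\tfrac{3\Delta}{4}$ neighbours inside $D_i$; combined with $|D_i|\le\Delta+4d$ this gives $\deg_H(v)\le\tfrac{\Delta}{4}+4d$, not $O(\epsilon\Delta)$. (Being $d$-dense bounds the number of non-edges \emph{inside} $N(v)$, not the number of non-neighbours of $v$ in $D_i$, and in any case the decomposition does not even assert that vertices of $D_i$ are $d$-dense --- only that vertices of $S$ are $d$-sparse.) With the true bound $\deg_H(v)\approx\tfrac{\Delta}{4}$, your final counting step fails: you may need up to $3\cdot 4d\approx 12d$ triple-vertices, each of which can be non-adjacent to $\tfrac{\Delta}{4}$ candidate singletons, so the excluded set has size up to $12d\cdot\tfrac{\Delta}{4}=\Theta(\epsilon\Delta^2)\gg\Delta$, and no valid clique of surviving singletons need remain. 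A second, smaller problem: you cannot simply ``prune'' the singleton set, since every vertex of $D_i$ must receive a colour --- a discarded singleton has to be absorbed into a pair whose \emph{both} endpoints it misses, and you give no argument that $4d$ such disjoint absorptions exist.

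The paper avoids all of this bookkeeping by taking a proper $(\Delta+1)$-colouring of $G[D_i]$ that \emph{minimizes the sum of the squares of the class sizes} and deriving every bullet of suitability from extremality: if two singletons were non-adjacent one could merge them and split the largest class; if a triple-vertex missed a singleton one could move it there; if fewer than $\Delta+1$ colours were used one could split a large class; and a class of size four forces a degree exceeding $\Delta$. Nothing in that argument needs any bound on non-degrees in $D_i$ beyond the trivial $\deg(v)\le\Delta$. If you want to keep a constructive matching-based approach, you would at minimum need to replace the false $O(\epsilon\Delta)$ bound with an argument that works when vertices of $D_i$ can have $\tfrac{\Delta}{4}$ non-neighbours in $D_i$, which I do not see how to do without essentially reintroducing the extremal swapping arguments.
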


\begin{proof}
We consider a proper colouring of $G[D_i]$ with at most $\Delta+1$ colours minimizing the sum of the squares of the sizes of the colour classes. 

If there is no colour class of size at least three, we can obtain a suitable colouring by repeatedly merging two nonadjacent vertices in singleton colour classes into a colour class of size two until this is no longer possible.

Suppose there is a colour class of size at least three.
Then the singleton colour classes must form a clique, for otherwise we could merge two and then make a vertex in the largest colour class a singleton colour class to decrease the sum of the squares of the size of the colour classes. 

If this colouring uses less than $\Delta+1$ colours, then we can move vertex from a colour class of size at least three to form a singleton colour class and decrease the sum of the squares of the size of the colour classes. 
Hence this colouring uses $\Delta+1$ colours.
Furthermore, all the vertices of a colour class of size at least three must be adjacent to every vertex forming a singleton colour class, for otherwise we could move one of the vertices of the former class to the latter class to decrease the sum of the squares of the size of the colour classes. 

So to show this colouring is suitable, it suffices to show that there is  no colour class of size exceeding three. 
Suppose to the contrary that there is a colour class of size at least four.  
Now, every vertex which is a singleton colour class sees a vertex in every colour class of size two, for otherwise we could merge these two colour classes and make a vertex in the largest colour class a singleton colour class to again contradict our choice of colouring. 
So every vertex in a singleton colour class is adjacent to at least one vertex in each other colour class and is adjacent to all vertices in each colour class with size at least three.
But this is impossible since $G$ has maximum degree $\Delta$ and there are $\Delta+1$ colour classes where one of them has size at least four. 
\end{proof} 

We call a suitable colouring of a $D_i$ {\it very suitable} if it has at least $\Delta-40 \cdot \lceil 20 \epsilon \Delta \rceil$ singleton colour classes.
Let ${\mathcal D} = \{D_i: 1 \leq i \leq \ell, D_i$ has a very suitable colouring$\}$.

\begin{lemma}
\label{suitable=suitable}
For every very suitable colouring of any $D_i$ in ${\cal D}$ and every set $J_i$ consisting of $\lceil 20\epsilon \Delta \rceil$ vertices in the singleton colour classes of the colouring, we have that any proper $(\Delta+1-\lceil 20\epsilon \Delta\rceil)$-colouring of $G[V']$ for a subset $V'$ of $V(G)-D_i$ can be extended to a proper $(\Delta+1-\lceil 20 \epsilon \Delta\rceil)$-colouring of $G[V' \cup (D_i-J_i)]$. 
\end{lemma}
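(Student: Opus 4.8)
\subsection*{Proof sketch (proposal)}

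The plan is to forget the very suitable colouring of $D_i$ itself and keep only the one structural feature it guarantees: a huge clique inside $D_i$. Write $m=\lceil 20\epsilon\Delta\rceil$, fix a very suitable colouring of $D_i$, let $Q$ be the set of vertices lying in its singleton colour classes, and set $R=D_i\setminus Q$. By the definition of ``very suitable'', $Q$ is a clique with $|Q|\ge \Delta-40m$, and since $D_i$ has at most $\Delta+4d=\Delta+16m$ vertices (recall $d=4m$), we get $|R|\le |D_i|-|Q|\le 56m$. Because $K_i\subseteq Q$, we have $R\subseteq D_i-K_i$ and $D_i-K_i=R\cup(Q\setminus K_i)$, with $Q\setminus K_i$ a clique of size $|Q|-m$. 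I would then extend $\phi$ by a greedy colouring that processes $R$ first and $Q\setminus K_i$ last, always using colours from $\{1,\dots,\Delta+1-m\}$ and always avoiding the colours already on neighbours in $V'\cup(D_i-K_i)$.

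First I would colour the vertices of $R$ in any order. When colouring $v\in R$, the already-coloured neighbours of $v$ lie in $R$ (at most $|R|-1\le 56m-1$ of them) or in $V'$; and by property (b) of the $d$-dense decomposition every vertex of $D_i$ has at least $\tfrac{3\Delta}{4}$ neighbours inside $D_i$, hence at most $\tfrac{\Delta}{4}$ outside $D_i$, in particular at most $\tfrac{\Delta}{4}$ in $V'$. So at most $56m-1+\tfrac{\Delta}{4}$ colours are forbidden, which for large $\Delta$ is strictly less than $\Delta+1-m$ (a routine computation using $\epsilon=\tfrac{1}{8001}$), so a colour is always available.

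Next I would colour the vertices of $Q\setminus K_i$ in any order. Here is the one point that makes the argument work: for $v\in Q\setminus K_i$, since $Q$ is a clique containing $v$, exactly $|Q|-1$ neighbours of $v$ lie in $Q$, so $v$ has at most $\deg(v)-(|Q|-1)\le \Delta-|Q|+1$ neighbours outside $Q$, and these account for all neighbours of $v$ in $R$ and in $V'$. Meanwhile the already-coloured neighbours of $v$ inside $Q\setminus K_i$ form a clique together with $v$, so they carry at most $|Q|-m-1$ distinct colours. Thus the number of colours forbidden for $v$ is at most $(|Q|-m-1)+(\Delta-|Q|+1)=\Delta-m<\Delta+1-m$, so again a colour is available. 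Note the bound $\Delta-m$ is independent of $|Q|$, so the size of the clique is irrelevant to the estimate.

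The colouring produced uses only colours in $\{1,\dots,\Delta+1-m\}$, agrees with $\phi$ on $V'$, and is proper: every vertex of $D_i-K_i$ received a colour different from all of its neighbours coloured earlier (in $V'$ or in $D_i-K_i$), hence different from all of its neighbours in $V'\cup(D_i-K_i)$. This is the desired extension. I do not expect any genuine obstacle beyond isolating the right fact: the estimate in the last step is forced by combining a near-$\Delta$-sized clique $Q$ (whose vertices have almost no neighbours outside $Q$) with a small remainder $R$, and it is essential to colour $R$ before $Q\setminus K_i$ so that the clique vertices are handled when the bound is tightest.
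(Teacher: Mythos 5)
Your proof is correct and follows essentially the same route as the paper: a greedy extension that colours the small non-clique part of $D_i$ first and the clique of singleton vertices last, where each such vertex still has the $\lceil 20\epsilon\Delta\rceil$ uncoloured vertices of $K_i$ among its neighbours and hence at most $\Delta-\lceil 20\epsilon\Delta\rceil$ forbidden colours. The only difference is cosmetic: the paper colours each nonsingleton colour class monochromatically and counts classes ($\le\Delta/6$) plus outside neighbours ($\le\Delta/2$), whereas you colour $R$ vertex by vertex using $|R|\le 56\lceil 20\epsilon\Delta\rceil$ and property (b) of the dense decomposition; both counts go through.
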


\begin{proof}
The definition of $d$-dense set implies $|D_i| \le \Delta+ 16\lceil 20\epsilon \Delta \rceil$.
Since there are at least $\Delta-40 \cdot \lceil 20 \epsilon \Delta \rceil$ singleton colour classes, there are at most $\frac{|D_i|-(\Delta-40 \cdot \lceil 20 \epsilon \Delta \rceil)}{2} \leq 28 \lceil 20 \epsilon \Delta\rceil <\frac{\Delta}{6}$ colour classes in our colouring which have size larger than one. 
For any colour class $Z$ of size three, each vertex of $Z$ sees every vertex forming a singleton colour class, so $\sum_{v \in Z}|N(v)-D_i| \leq 120 \lceil 20 \epsilon \Delta \rceil<\frac{\Delta}{2} $.
For every colour class $Z$ of size two, the definition of a $d$-dense set implies $\sum_{v \in Z}|N(v)-D_i| \leq 2 \cdot (\Delta-\frac{3\Delta}{4}) \leq \frac{\Delta}{2}$.

Given a proper $(\Delta+1-\lceil 20\epsilon \Delta\rceil)$-colouring of $G[V']$,  we colour the vertices of each nonsingleton colour class of our suitable colouring of $D_i$ greedily, ensuring that we give all the vertices in a given colour class the same colour. 
We can do this because of the remarks of the last paragraph and the fact that $\Delta+1-\lceil 20\epsilon \Delta\rceil > \frac{\Delta}{6} + \frac{\Delta}{2}$. 
Now, we can greedily colour the vertices in a singleton colour class but not in $J_i$.
We can do this as each such vertex sees the $\lceil 20 \epsilon \Delta\rceil$ vertices of $J_i$.
\end{proof}

\begin{lemma}
\label{unsuitable=suitable}
For every $D_i$ not in ${\cal D}$, any proper $(\Delta+1-\lceil 20\epsilon \Delta\rceil)$-colouring of $G[V']$ for a subset $V'$ of $V(G)-D_i$ can be extended to a proper $(\Delta+1-\lceil 20 \epsilon \Delta\rceil)$-colouring of $G[V' \cup D_i]$. 
\end{lemma}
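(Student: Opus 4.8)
The idea is to first use the hypothesis $D_i\notin\mathcal{D}$ to extract a very economical proper colouring of $G[D_i]$, and then to glue this colouring onto the given colouring $\phi$ of $G[V']$ by choosing, for each colour class of $G[D_i]$, a palette colour not used by $\phi$ on the external neighbourhood of that class. The dense‑set structure keeps the external neighbourhoods small, and the economy of the $G[D_i]$‑colouring provides the colours we need in reserve.

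\textbf{Step 1: structure forced by $D_i\notin\mathcal{D}$.} Fix a suitable colouring $\psi$ of $G[D_i]$ (one exists by the preceding lemma). Since $D_i$ has no very suitable colouring, $\psi$ has fewer than $\Delta-40\lceil 20\epsilon\Delta\rceil$ singleton classes. First I would observe that $\psi$ cannot use $\Delta+1$ colours: if it did, then, since the number of singletons is less than $\Delta-40\lceil 20\epsilon\Delta\rceil$ and each non‑singleton class has at least two vertices, counting vertices gives $|D_i|\ge 2(\Delta+1)-(\text{number of singletons})>\Delta+2+40\lceil 20\epsilon\Delta\rceil$, contradicting $|D_i|\le\Delta+16\lceil 20\epsilon\Delta\rceil$ from the $d$‑dense decomposition. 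By the structure of suitable colourings this forces $\psi$ to have no colour class of size at least three, so all its classes have size one or two. Writing $s$ and $q$ for the numbers of size‑one and size‑two classes, we get $|D_i|=s+2q$ and the number of colours is $m:=s+q=(|D_i|+s)/2<\Delta-12\lceil 20\epsilon\Delta\rceil$. Let $W$ be the clique of singleton vertices of $\psi$.

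\textbf{Step 2: reduction to a list colouring, and the Hall test.} Given the proper $(\Delta+1-\lceil 20\epsilon\Delta\rceil)$‑colouring $\phi$ of $G[V']$, the dense‑set definition gives $|N(v)\cap D_i|\ge\frac{3\Delta}{4}$ for every $v\in D_i$, hence $|N(v)\setminus D_i|\le\frac{\Delta}{4}$, so $L(v):=\{1,\dots,\Delta+1-\lceil 20\epsilon\Delta\rceil\}\setminus\phi(N(v)\cap V')$ has at least $\frac{3\Delta}{4}+1-\lceil 20\epsilon\Delta\rceil$ colours; extending $\phi$ to $V'\cup D_i$ is exactly finding a proper colouring of $G[D_i]$ from the lists $L(\cdot)$. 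For this it suffices to give the classes $Q_1,\dots,Q_m$ of $\psi$ pairwise distinct colours $c_1,\dots,c_m$ with $c_j\in\bigcap_{v\in Q_j}L(v)$ and then colour all of $Q_j$ with $c_j$; by Hall's theorem such a system of distinct representatives exists provided that for every family $\mathcal J$ of classes the number of palette colours that are \emph{excluded by every class in} $\mathcal J$ is at most $(\Delta+1-\lceil 20\epsilon\Delta\rceil)-|\mathcal J|$. If a colour $c$ is excluded by every $Q_j\in\mathcal J$, the colour‑$c$ class of $\phi$ inside $V'$ has a neighbour in each $Q_j$, and since the $Q_j$ are vertex‑disjoint these are $|\mathcal J|$ distinct edges; summing over all such $c$ and comparing with $\sum_{v\in\bigcup\mathcal J}|N(v)\cap V'|\le 2|\mathcal J|\cdot\frac{\Delta}{4}$ shows there are at most $\frac{\Delta}{2}$ such colours, which already verifies Hall's condition whenever $|\mathcal J|\le\frac{\Delta}{2}+1-\lceil 20\epsilon\Delta\rceil$.

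\textbf{Step 3: the large families, and the main obstacle.} The remaining case, $|\mathcal J|>\frac{\Delta}{2}$, can only occur when $W$ is large, and this is the delicate part of the argument. Here I would sharpen the edge count by exploiting that a vertex lying in a large clique of $D_i$ (in particular a vertex of $W$, after choosing $\psi$ so that its singletons include a maximum clique of $G[D_i]$) is adjacent to almost all of $D_i$ and hence has very few neighbours in $V'$, so very few colours of $\phi$ can be excluded by it. Feeding these sharper bounds into the edge count for families $\mathcal J$ consisting largely of singletons, together with the colour slack $(\Delta+1-\lceil 20\epsilon\Delta\rceil)-m\ge 11\lceil 20\epsilon\Delta\rceil$, recovers Hall's condition; and if the sharpened test still misses by the inevitable $O(\lceil 20\epsilon\Delta\rceil)$, one first splits $O(\lceil 20\epsilon\Delta\rceil)$ of the size‑two classes of $\psi$ into singletons — affordable because that many spare palette colours are available — applies the system of distinct representatives to the refined family of classes, and finally colours the $O(\lceil 20\epsilon\Delta\rceil)$ vertices still uncoloured greedily from palette colours not yet used inside $D_i$. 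The step I expect to fight hardest with is precisely this one: verifying Hall's condition uniformly over all large families $\mathcal J$ when $D_i$ contains a near‑spanning clique, where the crude "$\le\frac{\Delta}{2}$ excluded colours" bound is useless and one must carefully balance internal against external degrees and track the reserve colours so that, with $\epsilon=\frac{1}{8001}$, all the constants line up.
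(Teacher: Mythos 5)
Your Steps 1--2 are fine: the computation showing $\psi$ has no class of size three and uses at most $m=(|D_i|+s)/2<\Delta-12\lceil 20\epsilon\Delta\rceil$ colours is correct, and the edge-counting verification of Hall's condition for families with $|\mathcal J|\le\frac{\Delta}{2}+1-\lceil 20\epsilon\Delta\rceil$ works. But Step~3 has a genuine gap, and it is not a matter of tightening constants. First, $|\mathcal J|>\frac{\Delta}{2}$ does \emph{not} force $W$ to be near-spanning: since $m=(|D_i|+s)/2$ and $|D_i|\ge\Delta+1-32\lceil 20\epsilon\Delta\rceil$, one already has $m>\frac{\Delta}{2}$ once $s\ge 32\lceil 20\epsilon\Delta\rceil$, and for $s$ anywhere up to roughly $\frac{3\Delta}{4}$ the bound $|N(v)\cap V'|\le\Delta-(s-1)$ for $v\in W$ is no better than the trivial $\frac{\Delta}{4}$. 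In that regime (take $s=\frac{3\Delta}{4}$, so $m\approx\frac{7\Delta}{8}$) the averaged edge count gives roughly $\frac{\Delta}{4}$ universally excluded colours while Hall requires at most $\Delta+1-\lceil 20\epsilon\Delta\rceil-m\approx\frac{\Delta}{8}$, a failure by a $\Theta(\Delta)$ margin. The information you are missing is \emph{internal} to $D_i$: every vertex of $D_i\setminus W$ has at least $\frac{3\Delta}{4}$ neighbours in $D_i$ and hence at least $\frac{\Delta}{2}$ in $W$, which forces the vertices of $W$ to have many neighbours inside $D_i\setminus W$ and therefore far fewer than $\frac{\Delta}{4}$ neighbours in $V'$ on average; without feeding this into the count your Hall test cannot be closed, and I do not see that Hall's condition even holds in general for the classes of $\psi$. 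Your two fallback moves also do not help: splitting size-two classes into singletons only increases $|\mathcal J|$ without shrinking any excluded-colour set, so it makes Hall's condition strictly harder; and the final greedy on the leftover vertices ``from palette colours not yet used inside $D_i$'' is unjustified, since a leftover vertex of $W$ may by then see $\Delta+1-\lceil 20\epsilon\Delta\rceil$ distinct colours on its neighbourhood (its forbidden colours come from outside $D_i$ as well as inside).

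For comparison, the paper avoids Hall entirely. It extracts $\lceil\frac{8\lceil 20\epsilon\Delta\rceil}{3}\rceil$ disjoint nonadjacent pairs from the non-singleton classes and colours each pair monochromatically, then uses the large common neighbourhoods of these pairs inside $D_i$ (exactly the internal-degree information described above) to find a set $Z$ of $\lceil\frac{\Delta}{30}\rceil$ vertices each adjacent to both ends of $\lceil 20\epsilon\Delta\rceil$ pairs, and a set $Y$ of $\lceil\frac{\Delta}{3}\rceil$ vertices each with $\lceil 20\epsilon\Delta\rceil$ neighbours in $Z$. Colouring greedily in the order pairs, $D_i-Y-Z$, $Y$, $Z$ guarantees every vertex, at the moment it is coloured, has $\lceil 20\epsilon\Delta\rceil$ ``savings'' (uncoloured neighbours or repeated colours on monochromatic pairs), so one of the $\Delta+1-\lceil 20\epsilon\Delta\rceil$ colours is always free. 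If you want to salvage your approach, you would need to import that internal-degree analysis into the Hall verification; as written, Step~3 does not go through.
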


\begin{proof}
Let $\phi$ be a suitable colouring of $D_i$.
The definition of $d$-dense set implies $|D_i| \ge \Delta+1 - 32\lceil 20\epsilon\Delta \rceil$.
Since $D_i \not \in {\mathcal D}$, $\phi$ has at most $\Delta+1 - 40\lceil 20\epsilon\Delta \rceil$ singleton colour classes, so the sum of the sizes of the colour classes of size at least two in $\phi$ is at least $8 \lceil 20 \epsilon \Delta\rceil$.  
Since every colour class of $\phi$ has size at most three, $D_i$ contains at least $\lceil \frac{8 \lceil 20 \epsilon \Delta\rceil}{3} \rceil$ disjoint pairs of nonadjacent vertices. 
Pick $\lceil \frac{8 \lceil 20 \epsilon \Delta\rceil}{3} \rceil$ such pairs $R_1,R_2,...$.
Note that $\lceil \frac{8 \lceil 20 \epsilon \Delta\rceil}{3} \rceil <\frac{\Delta}{150}$.
Since $|D_i| \le \Delta +4d \le \frac{26\Delta}{25}$ and $G[D_i]$ has minimum degree at least $\frac{3\Delta}{4}$, we know that for each $R_j$, there are at least $2 \cdot \frac{3\Delta}{4}-|D_i| \geq \frac{23\Delta}{50}$ vertices in $D_i$ adjacent to both vertices in $R_j$, and at least $\frac{23\Delta}{50}-2\lceil \frac{8 \lceil 20 \epsilon \Delta\rceil}{3} \rceil \geq \frac{67\Delta}{150}$ of them are not in any of those pairs. 

Now we show that there are at least $\frac{\Delta}{30}$ vertices of $D_i$ not in the pairs $R_1,R_2,...$ adjacent to both vertices of $\lceil 20 \epsilon \Delta\rceil$ of the pairs.
Construct a bipartite graph $H$ with a bipartition $\{A,B\}$, where each vertex in $A$ corresponds to a pair $R_j$ and each vertex in $B$ corresponds to a vertex in $D_i$ but not in any pair, such that a vertex $a \in A$ is adjacent to a vertex $b \in B$ in $H$ if and only if $b$ sees both vertices in the pair corresponding to $a$.
So every vertex in $A$ is adjacent in $H$ to at least $\frac{67\Delta}{150}$ vertices in $B$.
Hence $|E(H)| \geq |A| \cdot \frac{67\Delta}{150}$.
Then the vertex in $B$ with the $\lceil \frac{\Delta}{30} \rceil$-th largest degree in $H$ has degree at least $\frac{|E(H)|-|A| \cdot \frac{\Delta}{30}}{|B|} \geq \frac{|A| \cdot \frac{62\Delta}{150}}{|D_i|} \geq \frac{\lceil \frac{8 \lceil 20 \epsilon \Delta\rceil}{3} \rceil \cdot \frac{62\Delta}{150}}{\frac{26\Delta}{25}} > \lceil 20\epsilon\Delta \rceil$, which is desired. 

So there is a set $Z$ consisting of $\lceil \frac{\Delta}{30} \rceil$ vertices of $D_i$ not in the pairs, every element of which sees both vertices of $\lceil 20 \epsilon \Delta\rceil$ of the pairs. 
Since every vertex of $Z$ sees at least $\frac{3\Delta}{4}$ vertices of $D_i$ by the definition of $d$-dense sets, every vertex of $Z$ sees at least $\frac{3\Delta}{4} -2\lceil \frac{8 \lceil 20 \epsilon \Delta\rceil}{3} \rceil \geq \frac{3\Delta}{4}-\frac{\Delta}{75} = \frac{221\Delta}{300}$ vertices of $D_i$ not in the pairs.
So among the vertices in $D_i-Z$ not in the pairs, the one that is adjacent to the $\lceil \frac{\Delta}{3} \rceil$-th largest many vertices in $Z$ is adjacent to at least $\frac{|Z| \cdot \frac{221\Delta}{300} - \frac{\Delta}{3} \cdot |Z|}{|D_i|-|Z|-2\lceil \frac{8 \lceil 20 \epsilon \Delta\rceil}{3} \rceil} \geq \frac{\lceil \frac{\Delta}{30} \rceil \cdot \frac{121\Delta}{300}}{\frac{26\Delta}{25}-\frac{\Delta}{30}} \geq \lceil 20\epsilon\Delta \rceil$ vertices in $Z$.

Hence there exists a set $Y$ of $\lceil \frac{\Delta}{3}\rceil$ vertices of $D_i-Z$ not in the pairs which see at least $\lceil 20 \epsilon \Delta\rceil$ vertices of $Z$.

Finally, we recall that for any pair of vertices of $D_i$, the definition of dense sets implies that the union of the neighbourhoods of the pair outside $D_i$ has at most  $2(\Delta-\frac{3\Delta}{4}) = \frac{\Delta}{2}$ elements. 
So, given a proper $(\Delta+1-\lceil 20\epsilon \Delta\rceil)$-colouring of $G[V']$, we can extend it by colouring our $\lceil \frac{8 \lceil 20 \epsilon \Delta\rceil}{3} \rceil <\frac{\Delta}{150}$ pairs $R_1,R_2,...$ so that the two vertices in each pair receive the same colour. 

Note that every vertex of $D_i-Y-Z$ is adjacent to at least $\frac{3\Delta}{4}-(|D_i|-|Y|-|Z|) \geq \frac{3\Delta}{4}-\frac{26\Delta}{25}+\frac{\Delta}{3}+\frac{\Delta}{30} \geq \lceil 20\epsilon \Delta\rceil$ vertices in $Y \cup Z$.
So we can further extend the proper colouring to $G[V' \cup (D_i-Y-Z)]$. 

Since every vertex of $Y$ sees at least $\lceil 20\epsilon \Delta\rceil$  vertices of $Z$, we can further extend the proper colouring to $G[V' \cup (D_i-Z)]$.
Finally, we extend to $G[V' \cup D_i]$ which we can do because each vertex of $Z$ sees both vertices of $\lceil 20\epsilon \Delta\rceil$ of our pairs, and the two vertices in each pair receive the same colour. 
\end{proof}

We are now ready to prove Lemma \ref{Zexists}. 

For every $D_i \in {\mathcal D}$, take a very suitable colouring $\phi_i$ of $D_i$, and let $C_i$ be the union of the singleton colour classes of $\phi_i$.
The definition of very suitable colouring implies that $C_i$ is a clique of size at least $\Delta-40 \cdot \lceil 20\epsilon\Delta \rceil > \frac{2\Delta}{3}+1$.

We consider the random process where we choose a uniformly random subset $K_i$ of $C_i$ of size $\lceil 20 \epsilon \Delta \rceil$ for each $D_i \in {\cal D}$, and place each vertex in $V(G)-\bigcup_{D_i \in {\mathcal D}} C_i$ into a set $Y$ independently with probability $\frac{7\epsilon}{4}$. 
We choose $K_i$ by taking a random permutation of $C_i$  and letting $K_i$ consist of the first $\lceil 20 \epsilon \Delta \rceil$ elements of the permutation. 
Let $Z = \bigcup_{D_i \in {\mathcal D}}K_i \cup Y$.

\begin{lemma} \label{useful_existence_1}
$Z$ satisfies (i) and (ii) in Definition \ref{useful_Z_def}. 
\end{lemma}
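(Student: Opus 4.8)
The plan is to check that both conditions follow directly from the construction of $Z$ together with the extension lemmas already proved; in particular, neither part of the lemma actually uses the randomness in the construction.

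I would dispatch condition (ii) first, as it is essentially immediate. Reindex so that $\mathcal{D}=\{D_1,\dots,D_\ell\}$. By construction each $K_i$ is a subset of $C_i$, and we have already observed that $C_i$ is a clique of size at least $\Delta-40\lceil 20\epsilon\Delta\rceil>\frac{2\Delta}{3}+1$, while $|K_i|=\lceil 20\epsilon\Delta\rceil$. Since $C_i\subseteq D_i$ and the dense sets $D_i$ are pairwise disjoint, the $C_i$, and hence the $K_i$, are pairwise disjoint; and since $Y$ is chosen inside $V(G)\setminus\bigcup_{D_i\in\mathcal{D}}C_i$, it is disjoint from every $K_i$. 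As $Z=\bigcup_i K_i\cup Y$ by definition, $\{K_1,\dots,K_\ell,Y\}$ is the required partition of $Z$, and (ii) holds.

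For condition (i) I would argue in two stages. First, I need a proper $(\Delta+1-\lceil 20\epsilon\Delta\rceil)$-colouring of the sparse part $G[S]$: since $d=4\lceil 20\epsilon\Delta\rceil$ and every vertex of $S$ is $d$-sparse, this is exactly the bound on ``the chromatic number of the graph obtained from $G$ by removing the dense sets'' alluded to earlier, and it follows from the known bounds on the chromatic number of graphs with locally sparse neighbourhoods (see \cite{mr_book}); concretely, $d$ is a fixed positive fraction of $\Delta$ large enough that the resulting saving exceeds $\lceil 20\epsilon\Delta\rceil$. Fix such a colouring. Second, I would extend it dense set by dense set, always maintaining a proper $(\Delta+1-\lceil 20\epsilon\Delta\rceil)$-colouring of $G[V']$, where $V'$ is the set of currently coloured vertices. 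Because the dense sets are pairwise disjoint, at the moment $D_i$ is processed we have $V'\subseteq V(G)-D_i$, so the appropriate lemma applies: for $D_i\notin\mathcal{D}$ extend the colouring to $G[V'\cup D_i]$ by Lemma \ref{unsuitable=suitable}, and for $D_i\in\mathcal{D}$, noting that $K_i\subseteq C_i$ consists of $\lceil 20\epsilon\Delta\rceil$ vertices lying in singleton colour classes of $\phi_i$, extend the colouring to $G[V'\cup(D_i-K_i)]$ by Lemma \ref{suitable=suitable}. When all dense sets have been processed, $V'=V(G)\setminus\bigcup_{D_i\in\mathcal{D}}K_i\supseteq V(G)-Z$; restricting the resulting colouring to $G-Z$ yields (i).

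I expect the only genuinely nontrivial ingredient to be the chromatic-number bound for $G[S]$; the remainder is bookkeeping with the two extension lemmas. Two small points need care: the extension lemmas require $V'$ to be disjoint from the dense set currently being coloured, which forces the dense sets to be processed one at a time rather than all at once; and it is harmless to colour the whole of a set $D_i\notin\mathcal{D}$ (including any of its vertices that happen to lie in $Y$), since the restriction of a proper colouring to $G-Z$ remains proper.
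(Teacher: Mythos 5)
Your argument has the same architecture as the paper's: part (ii) is verified exactly as the paper does (the $K_i$ sit inside the pairwise disjoint cliques $C_i$ of size at least $\Delta-40\lceil 20\epsilon\Delta\rceil>\frac{2\Delta}{3}+1$, and $Y$ is disjoint from them by construction), and part (i) is the same two-stage plan: properly colour $G[S]$ with $\Delta+1-\lceil 20\epsilon\Delta\rceil$ colours, then sweep through the dense sets one at a time, applying Lemma \ref{unsuitable=suitable} to each $D_i\notin\mathcal{D}$ and Lemma \ref{suitable=suitable} to each $D_i\in\mathcal{D}$ to colour everything outside $\bigcup_i K_i$. Your bookkeeping for that sweep is fine, and you correctly identify the colouring of $G[S]$ as the one nontrivial ingredient.

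That ingredient, however, is where there is a genuine gap. You cannot get the required bound from the sparse-neighbourhood colouring theorem in \cite{mr_book}, and you cannot fix this by taking ``$d$ a fixed positive fraction of $\Delta$ large enough'': $d$ is not free — it is pinned at $4\lceil 20\epsilon\Delta\rceil$ with $\epsilon=\frac{1}{8001}$ and must stay below $\frac{\Delta}{100}$ for the dense decomposition to exist. A $d$-sparse vertex has neighbourhood missing a fraction roughly $\frac{2d}{\Delta}=160\epsilon$ of its potential edges, and the classical bound saves only on the order of $\frac{160\epsilon}{e^6}\Delta\approx 0.4\epsilon\Delta$ colours, which falls far short of the $20\epsilon\Delta$ colours you need to give up. The paper instead invokes Theorem 1.6 of \cite{bpp} (Bonamy--Perrett--Postle), whose saving coefficient $0.3012-0.1283=0.1729$ applied to sparsity $160\epsilon$ yields roughly $27.7\epsilon\Delta>\lceil 20\epsilon\Delta\rceil$; the inequality is genuinely tight enough that the choice of theorem matters. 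There is also a small technical step you omit: these theorems are stated for graphs of maximum degree $\Delta$ in which \emph{every} vertex is sparse, so the paper first pads $G[S]$ with pendant leaves to form a $\Delta$-regular-degree host $G'$ before applying the bound. With the correct citation and that padding step, the rest of your proof goes through.
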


\begin{proof}
Let $G'$ be a graph obtained from $G[S]$ by attaching leaves so that $\Delta(G')=\Delta$.
Since every vertex in $S$ is $d$-sparse, $G'$ is a $160\epsilon$-sparse graph in the sense of \cite{bpp}.
Theorem 1.6 of \cite{bpp} implies that the chromatic number of $G'$ is at most $(1-(0.3012-0.1283) \cdot 160\epsilon)(\Delta+1) \leq \Delta+1-\lceil 20 \epsilon \Delta \rceil$ when $\Delta$ is sufficiently large.
So $G[S]$ is properly $(\Delta+1-\lceil 20 \epsilon \Delta \rceil)$-colourable.
By Lemmas \ref{suitable=suitable} and \ref{unsuitable=suitable}, $G-\bigcup_{D_i \in {\mathcal D}} K_i$ is properly $(\Delta+1-\lceil 20 \epsilon\Delta \rceil)$-colourable. 
Hence $Z$ satisfies (i).

Recall that for every $D_i \in {\mathcal D}$, $C_i$ is a clique of size greater than $\frac{2\Delta}{3}+1$.
So $Z$ satisfies (ii).
\end{proof}

For each $D_i \in {\cal D}$ and $v \in C_i$ we let $A_v$ be the event that $|N(v) \cap (Z-K_i)| > \frac{20\epsilon\Delta}{9}$. 
For each vertex $v$, we let $B_v$ be the event that $|N(v) \cap Z| > \frac{200 \epsilon \Delta}{9}$ and $C_v$ be the event that $|N(v) \cap Y| >2 \epsilon \Delta$.  
For each vertex $v$, we let $E_v$ be the event that $|N(v) \cap Z|\le \frac{3\epsilon \Delta}{2}$.

By Lemma \ref{useful_existence_1}, to prove that $Z$ is useful to finish the proof of this lemma, it suffices to prove that with positive probability, none of those events occur.

Now, each of the events indexed by $v$ depends only on the choices of $K_i$ for $D_i$ in ${\cal D}$
such that $N(v) \cap C_i \neq \emptyset$, and on which of the  neighbours  of $v$ outside these sets were placed in 
$Z$. So they are all mutually independent of all the other events except a set $S_v$ indexed by vertices within 
distance four of $v$. 
It follows that if we can show each of our events has probability at most $\frac{1}{21\Delta^{4}}$, then we can apply the Local Lemma to show that with positive probability none of the events occur and hence there is a usable $Z$. 

Each of our events bounds the size of some set. In every case, to show that the bound holds with 
probability less than $\frac{1}{21\Delta^4}$, we bound the expected size of this set and then apply Corollary \ref{permutation} to bound the difference between the actual size of the set and its expected size. 
In every case, we can certify that the size of the set is $s$ by specifying $s$ choices or permutation positions.
In every case, swapping two elements of the permutation or changing a choice can affect the size of a set by at most 
one.  Thus, in every case, for large enough $\Delta$, the probability that the set has size within  $\Delta^{2/3}$
of its expected size is greater  than $1-\frac{1}{64\Delta^4}$ by Corollary \ref{permutation} since the expected size of those sets are at most $\Delta$. 

We consider first $A_v$. We note as $v\in C_i$ it has at least $|C_i|-1 \geq \Delta -40\lceil 20 \epsilon \Delta \rceil-1$ neighbours in $C_i$, so it has at most $40\lceil 20 \epsilon \Delta \rceil+1$ neighbours in $V(G)-C_i$. 
Each of these neighbours is  either in $Y$ with probability $\frac{7\epsilon}{4}<\frac{1}{360}-10^{-7}$ or is in $\bigcup_{D_i \in {\mathcal D}}K_i$ with probability at most $\frac{\lceil 20\epsilon\Delta \rceil}{\min_j|C_j|} \leq \frac{\lceil 20 \epsilon \Delta \rceil}{\Delta -40\lceil 20 \epsilon \Delta \rceil}<\frac{1}{360}-10^{-7}$.
So each of these neighbours is in $Z$ with probability less than $\frac{1}{360}-10^{-7}$.
Hence the expected size of $N(v) \cap (Z-K_i)$ is less than $(40\lceil 20 \epsilon \Delta \rceil+1) \cdot (\frac{1}{360}-10^{-7})< \frac{20\epsilon\Delta}{9}-\Delta^{2/3}$. 
So we are done. 

We consider next $B_v$. 
Each of the $\Delta$  neighbours of $v$ is in $Z$ with probability at most $\max\{\frac{7\epsilon}{4},\frac{\lceil 20 \epsilon \Delta \rceil}{\Delta -40\lceil 20 \epsilon \Delta \rceil}\} <\frac{1}{360} - 3.8 \cdot 10^{-7} = \frac{8001}{360}\epsilon - 3.8 \cdot 10^{-7} < \frac{200\epsilon}{9}- 3 \cdot 10^{-8}$. 
Hence the expected size of $N(v) \cap Z < \frac{200\epsilon\Delta}{9}-\Delta^{2/3}$ and we are done. 

We consider next $C_v$. Each of the  $\Delta$ neighbours of $v$  is in 
$Y$ with probability at most $\frac{7\epsilon}{4}$.
Hence the expected size of $N(v) \cap Y$ is at most $\frac{7\epsilon \Delta}{4} < 2\epsilon\Delta-\Delta^{2/3}$.
So we can apply our concentration inequality to finish off.

Finally we consider $E_v$. Each of the $\Delta$  neighbours of $v$ is in $Z$ with probability at least $\min\{\frac{7\epsilon}{4},\frac{\lceil 20\epsilon\Delta \rceil}{\max_j|C_j|}\} \geq \min\{\frac{7\epsilon}{4},\frac{\lceil 20\epsilon\Delta \rceil}{\max_j|D_j|}\} \geq \min\{\frac{7\epsilon}{4},\frac{\lceil 20\epsilon\Delta \rceil}{\Delta+4d}\} \geq \frac{7\epsilon}{4}$.
Hence the expected size of $N(v) \cap Z$ is at least $\frac{7\epsilon \Delta}{4} > \frac{3\epsilon\Delta}{2}+\Delta^{2/3}$.
So we can apply our concentration inequality to finish off.
This completes the proof of Lemma \ref{Zexists}. 

\subsection{Proof of Lemma \ref{Zisgood}}

Let $Z$ be a usable subset of $V(G)$ with a partition, as in the definition of usable, into sets $K$ and $Y$, where $K$ is a disjoint union of cliques $K_1,K_2,...,K_\ell$ each contained in a clique in $G$ with size at least $\frac{2\Delta}{3}+1$, for some nonnegative integer $\ell$.

Since $Z$ is usable, we can colour $G-Z$ properly by using $\Delta+1-\lceil 20 \epsilon \Delta \rceil$ colours. 
Then we assign colours from a disjoint set of $c=\lceil 20 \epsilon \Delta \rceil$ colours, say $\{1,...,c\}$, to the vertices of $Z$ as follows. 
For each $K_i$, we choose a random permutation of $K_i$ and assign its $j^{{\rm th}}$ element colour $j$.
We assign every other vertex of $Z$ a uniform random colour from the set $\{1,2,...,c\}$.  
For every monochromatic edge both of whose  endpoints are in $Y$, we uncolour both endpoints. 
For every monochromatic edge with at least one endpoint in $K$, we uncolour only the endpoints in $K$. 

\begin{lemma}
For every vertex $z \in Z$ and colour $r$, the probability that $z$ is assigned  colour $r$  equals $\frac{1}{\lceil 20\epsilon\Delta \rceil}$.
\end{lemma}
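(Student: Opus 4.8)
The plan is to condition on the partition of $Z$ into $K = K_1 \cup \dots \cup K_\ell$ and $Y$, which is fixed once the usable set $Z$ (together with the partition guaranteed by the definition of usable) has been specified, and then to split into two cases according to whether $z \in Y$ or $z$ lies in one of the cliques $K_i$. The uncolouring steps at the end of the procedure are irrelevant for this statement, since the lemma concerns the colour \emph{assigned} to $z$ in the assignment step, before any vertex is uncoloured.

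If $z \in Y$, then by construction $z$ receives a colour chosen uniformly at random from $\{1,\dots,c\}$, where $c = \lceil 20\epsilon\Delta\rceil$; hence the probability that this colour equals $r$ is exactly $\frac{1}{c} = \frac{1}{\lceil 20\epsilon\Delta\rceil}$.

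If instead $z \in K_i$ for some $i$, recall from Definition \ref{useful_Z_def}(ii) that $|K_i| = \lceil 20\epsilon\Delta\rceil = c$. The colouring of $K_i$ is obtained by drawing a uniformly random permutation of $K_i$ and giving its $j^{\rm th}$ element colour $j$; since $|K_i| = c$, this is precisely a uniformly random bijection from $K_i$ onto $\{1,\dots,c\}$. For the fixed vertex $z$ and the fixed colour $r$, the event that $z$ is assigned colour $r$ is exactly the event that $z$ occupies position $r$ in the permutation, and by the symmetry of a uniformly random permutation this has probability $\frac{1}{|K_i|} = \frac{1}{c} = \frac{1}{\lceil 20\epsilon\Delta\rceil}$.

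In both cases the probability is $\frac{1}{\lceil 20\epsilon\Delta\rceil}$, which is what we want. There is no genuine obstacle here; the only point worth flagging is that the argument for vertices of the $K_i$ uses the equality $|K_i| = c$, so that each vertex of $K_i$ does indeed receive a uniformly distributed colour of $\{1,\dots,c\}$.
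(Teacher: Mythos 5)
Your proposal is correct and rests on the same underlying observation as the paper's proof, namely that by symmetry each vertex of $K_i$ is equally likely to occupy any position of the random permutation (the paper phrases this as deferring the choice for $z$ until last, while you argue directly from the uniformity of the bijection $K_i \to \{1,\dots,c\}$ using $|K_i| = \lceil 20\epsilon\Delta\rceil = c$). Your explicit case split between $z \in Y$ and $z \in K_i$, and your remark that the uncolouring step is irrelevant to the assigned colour, are both accurate.
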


\begin{proof}
This follows immediately from the definition and the fact   if $z$ is in some $K_i$, it is equally likely to appear in 
each position of the permutation. 
\end{proof}

Note that the current coloured vertices give a partial proper colouring $\phi$ of $G$ with $\Delta+1$ colours.
For every $v \in V(G)$ which is not in a clique of size at least $\frac{2\Delta}{3}+1$, 
    \begin{itemize}
        \item let $S_v$ be the set of neighbours of $v$ in $Z$ which were assigned a colour not assigned to any other neighbour of $v$  and retained it, 
        \item let $S_v'$ be the set of uncoloured neighbours of $v$ in $Z$, 
        \item let $S_v'' = \{u \in N(v) \cap Z: u$ is assigned  a colour $x$ such that at least $\lceil \log^2\Delta \rceil$ vertices in $N(v) \cap Z$ are assigned $x \}$,
        \item let $X_v = |S_v|-|S_v' \cup S_v''|$, and
        \item let $A_v$ be the event that $X_v<\epsilon \Delta$. 
    \end{itemize}
Note that every vertex in $S_v$ is a $(\phi,v)$-undisturbed vertex.
So if $A_v$ does not occur, then the number of $(\phi,v)$-undisturbed vertices is at least $\epsilon\Delta$ more than the number of uncoloured vertices in $N(v) \cap Z$.

Let $\overline{K}$ be the union of all cliques in $G$ with size at least $\frac{2\Delta}{3}+1$.
Note that $K \subseteq \overline{K}$.
By Observations \ref{greedy_obs}, to prove Lemma \ref{Zisgood}, it suffices to show that with positive probability, none of the events $A_v$ (for $v \in V(G)-\overline{K}$) occur.
Since each $A_v$ depends only on the colour assignments on vertices within distance 2 of $G$,
it suffices to show that $P(A_v) \leq \frac{1}{5\Delta^4}$ for every $v \in V(G)-\overline{K}$ 
as we are then done by the Local Lemma.

Fix $v \in V(G)-\overline{K}$.

We first bound the probability of a vertex being in $S_v'$ or $S_v''$.

\begin{lemma} \label{uncoloured_prob_useful}
For a vertex $x \in Z$, the probability that $x$ is uncoloured is at most  $\frac{1}{9}$.
\end{lemma}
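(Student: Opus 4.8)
The plan is to bound, for a fixed vertex $x \in Z$, the probability that $x$ loses its colour during the uncolouring phase. There are two cases depending on whether $x$ lies in some $K_i$ or in $Y$, and in either case $x$ is uncoloured only if it shares its colour with some neighbour that also lies in $Z$ and was assigned the same colour.

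First I would reduce to a conditional computation: reveal the colours of all vertices of $Z$ \emph{except} $x$ (if $x \in K_i$, reveal everything outside the $K_j$'s meeting $x$'s clique first, exactly as in the preceding lemma, so that $x$ still receives a uniform colour from $\{1,\dots,c\}$ conditioned on that information). Then $x$ is uncoloured only if the uniformly random colour it receives collides with the colour of one of its already-coloured neighbours in $Z$; moreover if $x \in K_i$ then its neighbours inside $K_i$ automatically get distinct colours, so only neighbours in $Z - K_i$ can cause $x$ to be uncoloured. Thus $P(x \text{ uncoloured} \mid \text{revealed colours}) \le \frac{|N(x)\cap Z|}{c}$ when $x\in Y$, and $\le \frac{|N(x)\cap(Z-K_i)|}{c}$ when $x\in K_i$ (here $c = \lceil 20\epsilon\Delta\rceil$).

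Next I would plug in the size bounds that hold because $Z$ is usable. If $x \in Y$, then $x$ is in $Y$ and hence not in a clique of size exceeding $\tfrac{2\Delta}{3}+1$, so condition (iv) of Definition~\ref{useful_Z_def} gives $|N(x)\cap Z| \le \tfrac{200\epsilon\Delta}{9}$; dividing by $c \ge 20\epsilon\Delta$ yields a bound of roughly $\tfrac{10}{9}\cdot\tfrac{1}{2} = \tfrac{10}{9}\cdot\tfrac{1}{2}$, i.e.\ at most $\tfrac{1}{9}$ after accounting for the ceiling (indeed $\tfrac{200\epsilon\Delta/9}{20\epsilon\Delta}=\tfrac{10}{9}$, hmm — so I would instead use the sharper route below). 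If $x \in K_i$, condition (iii) gives $|N(x)\cap(Z-K_i)| \le \tfrac{20\epsilon\Delta}{9}$, and dividing by $c \ge \lceil 20\epsilon\Delta\rceil \ge 20\epsilon\Delta$ gives at most $\tfrac{1}{9}$, as desired. For the $Y$ case the correct bound must come from a finer estimate: a vertex in $Y$ has at most $|N(x)\cap Y|\le 2\epsilon\Delta$ neighbours in $Y$ (condition (iv)) plus at most $|N(x)\cap K|$ neighbours in $K$, and a neighbour in some $K_j$ shares $x$'s colour only if it happens to be placed in the appropriate position of the permutation of $K_j$; summing these smaller contributions over the at most $\tfrac{200\epsilon\Delta}{9}$ neighbours in $Z$ and dividing by $c$, one checks the total is at most $\tfrac{1}{9}$.

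The main obstacle is getting the constants to close in the $x \in Y$ case: the crude bound $|N(x)\cap Z|/c$ is just barely too weak, so I expect to need to split $N(x)\cap Z$ into its part in $Y$ (bounded by $2\epsilon\Delta$ via (iv), contributing at most $\tfrac{2\epsilon\Delta}{20\epsilon\Delta}=\tfrac{1}{10}$) and its part in $K$, and argue that a $K$-neighbour in $K_j$ collides with $x$ with probability at most $\tfrac{1}{|K_j|}$ rather than $\tfrac{1}{c}$ — which is a strictly better bound since $|K_j| = \lceil 20\epsilon\Delta\rceil = c$, so in fact it is the same. The cleanest fix is therefore to observe that for $x\in Y$, after revealing all other colours, the number of \emph{distinct} colours appearing on $N(x)\cap Z$ is what matters, and using $|S_v''|$-type reasoning (at most $\lceil\log^2\Delta\rceil$ per repeated colour is irrelevant here) together with (iv) one bounds the number of forbidden colours for $x$ by $2\epsilon\Delta + o(\Delta) < \tfrac{c}{9}$ for large $\Delta$. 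Once that inequality is verified, taking expectations over the revealed colours completes the proof.
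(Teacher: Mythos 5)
Your treatment of the case $x \in K_i$ is correct and matches the paper: the neighbours inside $K_i$ receive distinct colours by the permutation, so only collisions with $N(x)\cap(Z-K_i)$ matter, and condition (iii) gives $|N(x)\cap(Z-K_i)| \le \frac{20\epsilon\Delta}{9}$, hence probability at most $\frac{20\epsilon\Delta}{9\lceil 20\epsilon\Delta\rceil}\le\frac19$.

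However, your handling of the case $x \in Y$ has a genuine gap. You correctly observe that the crude bound $|N(x)\cap Z|/c \le \frac{200\epsilon\Delta/9}{20\epsilon\Delta}=\frac{10}{9}$ is useless, but neither of your proposed repairs works. Bounding the collision probability with a $K_j$-neighbour by $1/|K_j|$ gives nothing new, as you note. And your ``cleanest fix'' --- bounding the number of \emph{distinct} colours appearing on $N(x)\cap Z$ by $2\epsilon\Delta+o(\Delta)$ --- is false: $x$ may have close to $\frac{200\epsilon\Delta}{9}-2\epsilon\Delta$ neighbours scattered among the cliques $K_j$, and since each $K_j$ is rainbow-coloured these neighbours can carry that many distinct colours, so the number of forbidden colours can still be of order $\frac{200\epsilon\Delta}{9}$. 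The idea you are missing is the \emph{asymmetric uncolouring rule} in the construction of $\phi$: for a monochromatic edge with at least one endpoint in $K$, only the endpoints in $K$ are uncoloured, while both endpoints are uncoloured only when the edge lies inside $Y$. Consequently a vertex $x\in Y$ is uncoloured only if it collides with a neighbour in $Y$, and condition (iv) gives $|N(x)\cap Y|\le 2\epsilon\Delta$, so the probability is at most $\frac{2\epsilon\Delta}{\lceil 20\epsilon\Delta\rceil}\le\frac{1}{10}$. Without invoking this asymmetry the stated bound of $\frac19$ cannot be recovered by your argument.
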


\begin{proof}
We let $K_x$ be $\{x\}$ if $x \in Y$, and let $K_x$ be the $K_i$ containing $x$ otherwise.
We condition on the colour of all the vertices  except  those  in  $K_x$.
Given this conditioning, $x$ receives a uniform random colour. 
If $x \in Y$, then $x$ is uncoloured only when there exists a monochromatic edge between $x$ and a vertex in $Y$, so the conditional and hence unconditioned  probability that $x$ is uncoloured is at most $\frac{|N(x) \cap Y|}{\lceil 20 \epsilon \Delta \rceil}$  which, by the definition of usable set, is at most $\frac{1}{10}$. 
If $v \in K_i$ for some $i$, then the  conditional and hence unconditioned probability that $x$ is uncoloured is at most $\frac{|N(x) \cap (Z-K_i)|}{\lceil 20 \epsilon \Delta \rceil} \leq \frac{20\epsilon\Delta}{9\lceil 20\epsilon\Delta \rceil} \leq \frac{1}{9}$.    
\end{proof}

\begin{lemma} \label{many_neighbour_prob_useful}
For a vertex $x \in N(v) \cap Z$, $P(x \in S_v'') \leq (\frac{23e}{20\log^2\Delta})^{\log^2\Delta}$. 
\end{lemma}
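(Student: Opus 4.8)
The plan is to bound, for a fixed colour $x$ (abusing notation slightly since $x$ denotes a vertex), the probability that a specified vertex receives that colour \emph{and} at least $\lceil \log^2\Delta\rceil$ vertices of $N(v)\cap Z$ receive it, then observe that conditioning on the event that our vertex $x\in S_v''$ we need at least $\lceil\log^2\Delta\rceil$ of the at most $\Delta$ vertices in $N(v)\cap Z$ to land in the same colour class as $x$. So the first step is: condition on everything except the colours of the vertices of $N(v)\cap Z$, and even then, for each vertex $u\in N(v)\cap Z$ bound the conditional probability that $u$ gets the same colour as $x$ by $\frac{1}{\lceil 20\epsilon\Delta\rceil}$ (if $u$ and $x$ lie in distinct $K_i$'s this is exact by the ``colour one vertex of each $K_i$ last'' argument used in the preceding lemmas; if $u$ lies in the same $K_i$ as $x$, then $u$ simply cannot share $x$'s colour, which only helps). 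Crucially these per-neighbour bounds must be usable jointly.

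The second step is to make the union-bound / independence estimate precise. Since the colours assigned to distinct vertices outside a common $K_i$ are mutually independent uniform choices, and membership in a single $K_i$ forbids sharing, the number $W$ of vertices of $N(v)\cap Z$ (other than $x$ itself) that get $x$'s colour is stochastically dominated by $\Bin(\Delta,\frac{1}{\lceil 20\epsilon\Delta\rceil})$; here I would use the standard fact that a union bound over which $\lceil\log^2\Delta\rceil-1$ of the $\le\Delta$ neighbours coincide with $x$ gives
$$P(x\in S_v'')\le \binom{\Delta}{\lceil\log^2\Delta\rceil-1}\Big(\frac{1}{\lceil 20\epsilon\Delta\rceil}\Big)^{\lceil\log^2\Delta\rceil-1}\le \Big(\frac{e\Delta}{(\lceil\log^2\Delta\rceil-1)\lceil 20\epsilon\Delta\rceil}\Big)^{\lceil\log^2\Delta\rceil-1}.$$
Then I would simplify: with $\epsilon=\frac1{8001}$ we have $\lceil 20\epsilon\Delta\rceil\ge \frac{20\Delta}{8001}$, so $\frac{e\Delta}{(\log^2\Delta)\cdot\frac{20\Delta}{8001}} = \frac{8001e}{20\log^2\Delta}$; bookkeeping of the ``$-1$''s and of the ceilings (which cost at most a constant factor that is absorbed once $\Delta$ is large) brings this below $\big(\frac{23e}{20\log^2\Delta}\big)^{\log^2\Delta}$, giving the claimed bound. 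One must check that dropping from exponent $\lceil\log^2\Delta\rceil-1$ to $\log^2\Delta$ is harmless: the base is $o(1)$, so raising it to a slightly larger power only decreases the quantity, provided the base is at most $1$, which it is for large $\Delta$.

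The main obstacle I anticipate is the joint/independence bookkeeping: one has to be careful that the ``colour the last vertex of each clique uniformly'' trick, which cleanly gives the marginal $\frac{1}{\lceil 20\epsilon\Delta\rceil}$ for a single vertex, still yields a valid \emph{simultaneous} upper bound for a set of $\lceil\log^2\Delta\rceil$ vertices possibly spread across several $K_i$'s and across $Y$. The clean way to handle this is to fix the colour class $r$ we are testing, reveal the colours of all vertices not in the same $K_i$ as any vertex under consideration, and note that the events ``$u$ gets colour $r$'' for $u$ ranging over the relevant vertices become independent once we reveal, in each $K_i$, the colours of all its vertices except at most one distinguished one — and a set of independent events each of probability $\le \frac{1}{\lceil 20\epsilon\Delta\rceil}$ admits exactly the binomial tail bound above. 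Summing over the $\lceil 20\epsilon\Delta\rceil$ possible values of $r$ multiplies by $\lceil 20\epsilon\Delta\rceil$, which is negligible against the super-polynomially small $\big(\frac{e\Delta}{\cdots}\big)^{\log^2\Delta}$ and is likewise absorbed into the slack between $\frac{8001e}{20}$ and $\frac{23e}{20}$. Everything else is routine estimation with binomial coefficients and the inequality $\binom{n}{k}\le(en/k)^k$.
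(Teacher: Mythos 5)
Your overall strategy is the same as the paper's: observe that no single $K_i$ can contain two vertices of the same colour (within a $K_i$ the colours form a permutation), so the relevant colour choices are independent, and then apply a union bound with $\binom{n}{k}\le(en/k)^k$. The independence/domination bookkeeping you worry about is handled correctly. However, there is a concrete quantitative error that prevents your argument from proving the lemma as stated. You take the union bound over all $\Delta$ neighbours of $v$, whereas the event $x\in S_v''$ only concerns vertices of $N(v)\cap Z$, and since $v\notin\overline{K}$, condition (iv) of the definition of a usable set gives $|N(v)\cap Z|\le\frac{200\epsilon\Delta}{9}<23\epsilon\Delta$. This is precisely where the constant $23$ in the lemma comes from: the base of the exponential is essentially $\frac{|N(v)\cap Z|\cdot e}{\lceil\log^2\Delta\rceil\cdot 20\epsilon\Delta}\le\frac{23e}{20\log^2\Delta}$. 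With your choice of $\Delta$ in place of $23\epsilon\Delta$ the base becomes $\frac{8001e}{20\log^2\Delta}$, and your claim that ``bookkeeping of the $-1$'s and of the ceilings'' brings this below $\bigl(\frac{23e}{20\log^2\Delta}\bigr)^{\log^2\Delta}$ is false: the two bounds differ by a factor of roughly $\bigl(\frac{8001}{23}\bigr)^{\log^2\Delta}$, which is not absorbed by any constant-factor slack.

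To be fair, your weaker bound is still $o(1)$ and would in fact suffice for the only place the lemma is used (bounding $E[|S_v'\cup S_v''|]$ by $\frac{9}{80}|N(v)\cap Z|$), so the gap is in proving the stated inequality rather than in the downstream argument. The fix is one line: replace $\Delta$ by the bound $|N(v)\cap Z|\le\frac{200\epsilon\Delta}{9}$ before applying $\binom{n}{k}\le(en/k)^k$. The off-by-one between $\lceil\log^2\Delta\rceil-1$ and $\log^2\Delta$ in the exponent is harmless for the reason you give (the base is less than $1$ for large $\Delta$), and the extra factor from summing over colours is unnecessary if you condition on the colour assigned to $x$ first.
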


\begin{proof}
If $x \in S_v''$, then there are at least $\lceil \log^2\Delta \rceil$ vertices in $ N(v) \cap Z$ receiving the same colour before uncolouring.
Note that there is no single $K_i$ containing at least two of those $\lceil \log^2\Delta \rceil$ vertices.
So the colours received by those $\lceil \log^2\Delta \rceil$ vertices are independent.
Hence $P(x \in S_v'') \leq {|N(v)  \cap Z| \choose \lceil \log^2\Delta \rceil} \cdot (\frac{1}{\lceil 20\epsilon\Delta \rceil})^{\lceil \log^2\Delta \rceil}$.
Since $v \in V(G)-\overline{K}$, $|N(v)  \cap Z|  \leq 23\epsilon\Delta$.
So 
    \begin{align*}
        P(x \in S_v'') \leq & {\lfloor 23\epsilon\Delta \rfloor \choose \lceil \log^2\Delta \rceil} \cdot (\frac{1}{\lceil 20\epsilon\Delta \rceil})^{\lceil \log^2\Delta \rceil} \\
        \leq & (\frac{23\epsilon\Delta \cdot e}{\lceil \log^2\Delta \rceil})^{\lceil \log^2\Delta \rceil} \cdot (\frac{1}{20\epsilon\Delta})^{\lceil \log^2\Delta \rceil} \leq (\frac{23e}{20\log^2\Delta})^{\log^2\Delta}.
    \end{align*}
\end{proof}

\begin{lemma}
To prove Lemma \ref{Zisgood}, it suffices to show that $E[|S_v|] \geq \epsilon \Delta+\frac{|N(v) \cap Z|}{8}$. 
\end{lemma}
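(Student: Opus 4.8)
The plan is to deduce from the hypothesis $E[|S_v|]\ge\epsilon\Delta+\frac{|N(v)\cap Z|}{8}$ the bound $P(A_v)\le\frac{1}{5\Delta^4}$, i.e.\ that $X_v=|S_v|-|S_v'\cup S_v''|\ge\epsilon\Delta$ with probability at least $1-\frac{1}{5\Delta^4}$; as already noted, this together with the Local Lemma and Observation \ref{greedy_obs} completes the proof of Lemma \ref{Zisgood}. I would first bound $E[X_v]$ below by a quantity that exceeds $\epsilon\Delta$ by $\Omega(\epsilon\Delta)$, and then show that $X_v$ deviates from its mean by only $o(\epsilon\Delta)$ with probability $1-o(\Delta^{-4})$.

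For the expectation, Lemma \ref{uncoloured_prob_useful} and linearity give $E[|S_v'|]\le\frac{|N(v)\cap Z|}{9}$, while Lemma \ref{many_neighbour_prob_useful} gives $E[|S_v''|]\le|N(v)\cap Z|\bigl(\tfrac{23e}{20\log^2\Delta}\bigr)^{\log^2\Delta}=o(1)$. Since $X_v\ge|S_v|-|S_v'|-|S_v''|$, this yields
\begin{equation*}
E[X_v]\;\ge\;E[|S_v|]-\frac{|N(v)\cap Z|}{9}-o(1)\;\ge\;\epsilon\Delta+\frac{|N(v)\cap Z|}{72}-o(1),
\end{equation*}
and because $v\notin\overline K$, part (v) of Definition \ref{useful_Z_def} gives $|N(v)\cap Z|\ge\frac{3\epsilon\Delta}{2}$, so $E[X_v]\ge\epsilon\Delta+\frac{\epsilon\Delta}{48}-o(1)$, which exceeds $\epsilon\Delta+\frac{\epsilon\Delta}{50}$ once $\Delta$ is large.

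For the concentration I would not work with $X_v$ directly, because certifying a lower bound on $|S_v|$ would require exposing, for every witness $w\in S_v$, all the colours on $N(w)\cap Z$ to verify that $w$ retained its colour, and that is far too many positions. Instead I would write $|S_v|=T_v-R_v$, where $T_v$ counts the $w\in N(v)\cap Z$ whose assigned colour (before any uncolouring) is used on no other neighbour of $v$ in $Z$ --- equivalently, on no other neighbour of $v$ at all, as $Z$ and $V(G)-Z$ draw colours from disjoint palettes --- and $R_v$ counts those $w$ that are later uncoloured, so that
\begin{equation*}
X_v\;=\;T_v-R_v-|S_v'\cup S_v''|.
\end{equation*}
It then suffices to show that $T_v$ is not much below $E[T_v]$ and that $R_v$ and $|S_v'\cup S_v''|$ are not much above their means, each failing with probability $o(\Delta^{-4})$. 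Each of $T_v$, $R_v$, $|S_v'\cup S_v''|$ is nonnegative and is a function of a product of uniformly random permutations (the permutation of each $K_i$, together with the $Y$-colours, each regarded as the first entry of a random permutation of the colour set), so Corollary \ref{permutation} applies, and its bound does not depend on the number of factors. A single swap or colour change moves $T_v$ and $R_v$ by $O(1)$: it flips the ``unique colour in $N(v)\cap Z$'' status of only $O(1)$ vertices, and for $R_v$ there is at most one unique-colour vertex of any given colour whose uncoloured status it can affect. It moves $|S_v'\cup S_v''|$ by at most $O(\log^2\Delta)$ --- and this is exactly why $S_v''$ appears in the definition, since if a single colour change would flip the uncoloured status of at least $\lceil\log^2\Delta\rceil$ neighbours of $v$ in $Z$, those neighbours all share a colour and so already lie in $S_v''$, making their statuses irrelevant to $|S_v'\cup S_v''|$. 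For the certificates, $T_v\ge s$ is certified by exposing the at most $23\epsilon\Delta$ colours on $N(v)\cap Z$, $R_v\ge s$ by that together with one extra position per uncoloured witness, and $|S_v'\cup S_v''|\ge s$ by at most $\lceil\log^2\Delta\rceil$ positions per witness, so the certificates have size $O(\Delta\log^2\Delta)$. Since the three expectations are $O(\Delta)$ and the Lipschitz constants are $O(\log^2\Delta)$, choosing the deviation to be $\sqrt\Delta$ times a suitable fixed power of $\log\Delta$ makes both this deviation and the median-to-mean correction $6c\sqrt{2rE[X]}+16c^2r$ of Corollary \ref{permutation} equal to $o(\epsilon\Delta)$, while the failure probability it delivers is $o(\Delta^{-4})$.

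Combining the pieces, off an event of probability $o(\Delta^{-4})$ one gets $X_v\ge E[X_v]-o(\epsilon\Delta)\ge\epsilon\Delta+\frac{\epsilon\Delta}{50}-o(\epsilon\Delta)>\epsilon\Delta$, hence $P(A_v)\le\frac{1}{5\Delta^4}$ for large $\Delta$, as needed. The step I expect to be the main obstacle is the concentration, and within it the Lipschitz bound for $|S_v'\cup S_v''|$: since no codegree bound is assumed, a single colour change can in principle uncolour $\Theta(\Delta)$ neighbours of $v$, and the argument goes through only because any such pile of neighbours is forced to share a colour and is thereby absorbed into $S_v''$; pinning down this dichotomy and carefully tracking how a swap propagates through the uncolouring rule and through the uniqueness and popularity conditions is the delicate part.
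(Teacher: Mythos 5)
Your reduction to $P(A_v)\le\frac{1}{5\Delta^4}$ plus the Local Lemma, and your expectation computation via Lemmas \ref{uncoloured_prob_useful} and \ref{many_neighbour_prob_useful} giving $E[X_v]\ge\epsilon\Delta+\Omega(\epsilon\Delta)$, match the paper's proof. The divergence is in the concentration step, and that is where there is a gap. You correctly identify that $|S_v|$ has no small certificate, but your replacement $T_v$ inherits the same defect: to certify $T_v\ge s$ you propose exposing all of $N(v)\cap Z$, a certificate of fixed size roughly $23\epsilon\Delta$ \emph{independent of $s$}. Theorem \ref{concentrationtheorem} requires that $X\ge s$ be certifiable by $rs$ positions, and a certificate whose size does not scale with $s$ satisfies this with bounded $r$ only if the relevant value of $s$ (the median, for the lower tail) is itself $\Omega(\Delta)$ --- a fact you do not establish; your remark that ``the three expectations are $O(\Delta)$'' points in the wrong direction, since what is needed is a \emph{lower} bound on $\Med(T_v)$. (It is repairable here because $T_v\ge|S_v|$ and the standing hypothesis gives $E[|S_v|]\ge\epsilon\Delta$, but as written the hypothesis of the tool is not verified, and the same issue recurs for the upper tail of $R_v$, whose median could a priori be much smaller than $\Delta$.)

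The paper avoids this entirely with a complement trick: since $||S_v|-E[|S_v|]|=||W|-E[|W|]|$ for $W=(N(v)\cap Z)\setminus S_v=\{w\in N(v)\cap Z:\ w$ is assigned a colour assigned to some other vertex of $(N(v)\cup N(w))\cap Z\}$, it suffices to concentrate $|W|$ and $|S_v'\cup S_v''|$, and membership of each element of $W$ is witnessed by a single conflicting partner, giving an honest $2s$-position certificate for every $s$ (with Lipschitz constant $\lceil\log^2\Delta\rceil$, for the same $S_v''$-absorption reason you give). Your Lipschitz analysis of $T_v$ and $R_v$ (each $O(1)$, since at most one $T_v$-member carries any given colour) is a nice observation and your diagnosis of why $S_v''$ is in the definition is exactly right; but to make your route rigorous you would either need to justify applying Corollary \ref{permutation} with a non-scaling certificate, or switch to concentrating the complement as the paper does.
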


\begin{proof}
Assume $E[|S_v|] \geq \epsilon \Delta+\frac{|N(v) \cap Z|}{8}$.
We shall show that $P(A_v) \leq \frac{1}{5\Delta^4}$.

By Lemmas \ref{uncoloured_prob_useful} and \ref{many_neighbour_prob_useful}, $E[|S_v' \cup S_v''|] \leq (\frac{1}{9}+(\frac{23e}{20\log^2\Delta})^{\log^2\Delta})|N(v) \cap Z| \leq \frac{9}{80}|N(v) \cap Z|$.
Hence $E[X_v] = E[|S_v|]-E[|S_v' \cup S_v''|]  \geq (\epsilon \Delta+\frac{|N(v) \cap Z|}{8}) - \frac{9}{80}|N(v) \cap Z| = \epsilon \Delta+\frac{|N(v) \cap Z|}{80}$.
So $P(A_v) = P(X_v<\epsilon\Delta) \leq P(X_v<E[X_v]-\frac{|N(v) \cap Z|}{80}) \leq P(|X_v-E[X_v]| > \frac{|N(v) \cap Z|}{80})$.

Now, $X_v-E[X_v]=|S_v|-E[|S_v|]-|S'_v \cup S''_v|+E[|S'_v \cup S''_v|]$.
So, $\Big|X_v-E[X_v]\Big| \le \Big||S_v|-E[|S_v|]\Big|+\Big||S'_v \cup S''_v|-E[|S'_v \cup S''_v|]\Big|$.
Furthermore, $\Big||S_v|-E[|S_v|]\Big|=\Big||N(v) \cap Z|-|S_v|-E[|N(v) \cap Z|-|S_v|]\Big|$. 
Finally, letting  $W = \{w \in N(v) \cap Z: w$ is assigned a colour assigned to some other vertex of $(N(v) \cup N(w)) \cap Z\}$,
we see that $W = (N(v) \cap Z)-S_v$. 
Therefore, it suffices to show that for $B \in \{|S'_v \cup S''_v|, |W|\}$,
$P(|B-E[B]| \geq \frac{|N(v) \cap Z|}{160}) < \frac{1}{11\Delta^4}$. 
    
In both cases,  $B$  is a random variables satisfying Theorem \ref{concentrationtheorem} with $c= \lceil \log^2 \Delta \rceil$ and $r=2$. 
In addition  $E[|B|] \leq |N(v) \cap Z|$ and $|N(v) \cap Z| \geq \frac{3\epsilon\Delta}{2}$ by the definition of usable set and our choice of $v$. So, 
$\frac{|N(v) \cap Z|}{320} \geq 6\lceil \log^2 \Delta \rceil \sqrt{4E[|B|]}+32\lceil \log^2 \Delta \rceil^2$, and by Corollary \ref{permutation}, 
    \begin{align*}
        & P\left(\Big||B|-E[|B|]\Big| \geq \frac{|N(v) \cap Z|}{160}\right) \\
        \leq & P\left(\Big||B|-E[|B|]\Big| \geq \frac{|N(v) \cap Z|}{320}+6\lceil \log^2 \Delta \rceil\sqrt{4E[|B|]}+32\lceil \log^2 \Delta \rceil^2\right) \\
        \leq & 2e^{-\frac{(\frac{|N(v) \cap Z|}{320})^2}{16\lceil \log^2 \Delta \rceil^2(E[|B|]+\frac{|N(v) \cap Z|}{320})}} \leq  2e^{-\frac{(\frac{|N(v) \cap Z|}{320})^2}{16\lceil \log^2 \Delta \rceil^22|N(v) \cap Z|}} \leq \frac{1}{15\Delta^4}.
    \end{align*}
This proves the lemma.
\end{proof}

So, we need only show that $E[|S_v|] \geq \epsilon \Delta+\frac{|N(v) \cap Z|}{8}$. 

Let $c(v)$ be the the number of colours assigned to vertices of  $N(v) \cap Z$. 
We see that the number of colours assigned to exactly one neighbour of $v$ is at least $c(v)-(|N(v) \cap Z| -c(v))$. 
So the expected number of such colours is at least $2E[c(v)]-|N(v) \cap Z|$.
Hence by Lemma \ref{uncoloured_prob_useful}, $$E[|S_v|] \geq 2E[c(v)]-|N(v) \cap Z|-E[|S_v'|] \geq 2E[c(v)]-\frac{10|N(v) \cap Z|}{9}.$$ 

So it suffices to show that $E[c(v)] \geq \frac{89|N(v) \cap Z|}{144}+\frac{\epsilon\Delta}{2}$. 
Hence we may assume that $E[c(v)] < \frac{89|N(v) \cap Z|}{144}+\frac{\epsilon\Delta}{2}$.

\begin{lemma} \label{prob_retain_unique_useful}
For $w \in N(v) \cap Z$, $P(w \in S_v) \geq \frac{8}{9}-\frac{E[c(v)]}{\lceil 20 \epsilon \Delta \rceil}$. 
\end{lemma}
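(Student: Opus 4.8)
The plan is to bound $P(w\notin S_v)$ by a union bound over two "bad" events for the colour $r_w$ ultimately assigned to $w$ before uncolouring: that $r_w$ equals a colour appearing on another neighbour of $v$ in $Z$, or that $r_w$ equals a colour on a neighbour of $w$ that causes $w$ to be uncoloured. To make this quantitative I would condition on just enough randomness that $r_w$ is exactly uniform on $\{1,\dots,c\}$, where $c=\lceil 20\epsilon\Delta\rceil$, while the relevant neighbouring colours are frozen, and then simply count forbidden colours.

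\emph{Case $w\in Y$.} Here $r_w$ is chosen uniformly and independently, so I would condition on the assigned colours of all of $Z-\{w\}$; then $r_w$ is still uniform on $\{1,\dots,c\}$. Under this conditioning, $w\in S_v$ holds exactly when $r_w$ avoids (i) the set $D$ of colours used on $N(v)\cap(Z-\{w\})$, which has $|D|\le c(v)$, and (ii) the set of colours on $N(w)\cap Y$, since by the uncolouring rule a vertex of $Y$ is uncoloured only through a monochromatic edge inside $Y$; the latter set has size at most $|N(w)\cap Y|\le 2\epsilon\Delta\le c/10$ by clause (iv) of usability. Hence $P(w\in S_v\mid\text{colours on }Z-\{w\})\ge 1-|D|/c-1/10$, and taking expectations (using $|D|\le c(v)$ pointwise) gives $P(w\in S_v)\ge \frac{9}{10}-\frac{E[c(v)]}{c}\ge \frac{8}{9}-\frac{E[c(v)]}{c}$.

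\emph{Case $w\in K_i$.} Now I would \emph{not} condition on the whole colouring of $K_i$ (which would determine $r_w$), but instead, as in the proof of the preceding lemma that each $z\in Z$ gets a uniform colour, condition only on the assigned colours of $Z-K_i$ and think of colouring $w$ first within $K_i$: then $r_w$ is uniform on $\{1,\dots,c\}$ and the remaining vertices of $K_i$ receive a uniform bijection onto the other $c-1$ colours. The key point is that every neighbour of $v$ lying in $K_i$ is coloured after $w$ from the palette $\{1,\dots,c\}\setminus\{r_w\}$, hence can never equal $r_w$; and neighbours of $v$ outside $Z$ use a disjoint palette; so $r_w$ is shared with another neighbour of $v$ only if it lies in the set $D$ of colours used on $N(v)\cap(Z-K_i)$, with $|D|\le c(v)$. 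Also, by the uncolouring rule a vertex of $K$ is uncoloured only through a monochromatic edge to $N(w)\cap(Z-K_i)$ (an edge inside $K_i$ cannot be monochromatic), and this colour set has size at most $|N(w)\cap(Z-K_i)|\le \frac{20\epsilon\Delta}{9}\le c/9$ by clause (iii) of usability. Therefore $P(w\in S_v\mid\text{colours on }Z-K_i)\ge 1-|D|/c-1/9$, and since $|D|\le c(v)$ pointwise (the colours on $N(v)\cap(Z-K_i)$ form a subset of those on $N(v)\cap Z$), taking expectations gives $P(w\in S_v)\ge \frac{8}{9}-\frac{E[c(v)]}{c}$, which is the claimed bound, and which also dominates the bound obtained in the $Y$ case.

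The main obstacle is the clique case: one must resist conditioning on all of $K_i$, use the "colour $w$ first'' reorganisation of the random bijection instead, and then notice that this reorganisation automatically annihilates the contribution of the neighbours of $v$ inside $K_i$ to the shared-colour event. Everything else is careful bookkeeping with the two uncolouring rules — counting only those monochromatic edges that genuinely uncolour $w$ — together with clauses (iii) and (iv) of the definition of a usable set and the inequality $c=\lceil 20\epsilon\Delta\rceil\ge 20\epsilon\Delta$.
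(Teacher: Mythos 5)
Your proposal is correct and takes essentially the same approach as the paper: condition so that $w$'s colour is uniform on the palette of size $\lceil 20\epsilon\Delta\rceil$, then union-bound the event that $w$'s colour is shared with another neighbour of $v$ (probability at most $c(v)/\lceil 20\epsilon\Delta\rceil$) against the event that $w$ is uncoloured (probability at most $\frac19$). The only difference is that you re-derive the uncolouring bound of Lemma \ref{uncoloured_prob_useful} inline with an explicit case split between $Y$ and $K_i$, whereas the paper simply cites that lemma.
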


\begin{proof}
We first expose the choice of colours on every vertex in $Z$ except $w$ and the vertices in a $K_i$ containing $w$ (if such a $K_i$ exists), then make a choice for $w$. 
So $w$ chooses a uniform colour. 
The  conditional probability that $w$ chooses a colour appearing on other vertices in $N(v) \cap Z$ is at most $\frac{c(v)}{\lceil 20 \epsilon\Delta \rceil}$. 
By Lemma \ref{uncoloured_prob_useful}, the probability that $w$ is uncolored is at most $\frac{1}{9}$.
So $P(w \in S_v) \geq \frac{8}{9}-\frac{E[c(v)]}{\lceil 20 \epsilon\Delta \rceil}$.
\end{proof}

By Lemma \ref{prob_retain_unique_useful}, $E[|S_v|] \geq |N(v) \cap Z| \cdot (\frac{8}{9}-\frac{E[c(v)]}{\lceil 20 \epsilon \Delta \rceil})$, which exceeds $|N(v) \cap Z| \cdot (\frac{8}{9}-\frac{\frac{89|N(v) \cap Z|}{144}+\frac{\epsilon\Delta}{2}}{\lceil 20 \epsilon \Delta \rceil})$.

\begin{lemma}
    $$\epsilon \Delta+\frac{|N(v) \cap Z|}{8}  \leq |N(v) \cap Z| \cdot (\frac{8}{9}-\frac{\frac{89|N(v) \cap Z|}{144}+\frac{\epsilon\Delta}{2}}{\lceil 20 \epsilon \Delta \rceil}).$$
\end{lemma}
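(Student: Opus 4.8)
The plan is to treat this inequality as a short numerical verification, using that $|N(v)\cap Z|$ is squeezed between two fixed constant multiples of $\epsilon\Delta$. Write $m := |N(v)\cap Z|$. Since $v\notin\overline{K}$, properties (v) and (iv) of Definition \ref{useful_Z_def} give $\tfrac{3\epsilon\Delta}{2}\le m\le\tfrac{200\epsilon\Delta}{9}<23\epsilon\Delta$, and recall $\epsilon=\tfrac{1}{8001}$. First I would clear the ceiling from the denominator: since $\lceil 20\epsilon\Delta\rceil\ge 20\epsilon\Delta$ and the numerator $\tfrac{89m}{144}+\tfrac{\epsilon\Delta}{2}$ is positive,
$$\frac{8}{9}-\frac{\frac{89m}{144}+\frac{\epsilon\Delta}{2}}{\lceil 20\epsilon\Delta\rceil}\;\ge\;\frac{8}{9}-\frac{89m}{2880\,\epsilon\Delta}-\frac{1}{40}\;=\;\frac{311}{360}-\frac{89m}{2880\,\epsilon\Delta}.$$
One checks this last quantity is still positive on the range of $m$ (it is at least $\tfrac{311}{360}-\tfrac{89\cdot 200}{2880\cdot 9}>0$), so since $m>0$ it suffices to prove $\epsilon\Delta+\tfrac{m}{8}\le m\bigl(\tfrac{311}{360}-\tfrac{89m}{2880\epsilon\Delta}\bigr)$.

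Next I would normalise: writing $m=t\epsilon\Delta$ with $t\in[\tfrac32,\tfrac{200}{9}]$ and dividing by $\epsilon\Delta$, the desired inequality becomes the one-variable statement
$$1+\frac{t}{8}+\frac{89\epsilon}{2880}\,t^{2}\;\le\;\frac{311}{360}\,t .$$
Because $t\le\tfrac{200}{9}$ we may bound the quadratic term crudely by $\tfrac{89\epsilon}{2880}t^{2}\le\tfrac{89\epsilon}{2880}\cdot\tfrac{200}{9}\,t$, so it is enough to check
$$1\;\le\;t\Bigl(\frac{311}{360}-\frac18-\frac{89\epsilon}{2880}\cdot\frac{200}{9}\Bigr).$$
With $\epsilon=\tfrac{1}{8001}$ the coefficient in parentheses equals $\tfrac{266}{360}$ minus a quantity smaller than $10^{-4}$, hence exceeds $\tfrac{7}{10}$; since $t\ge\tfrac32$ the right-hand side exceeds $1$, and we are done. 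This in turn supplies the bound $E[|S_v|]\ge\epsilon\Delta+\tfrac{|N(v)\cap Z|}{8}$ needed to complete the proof of Lemma \ref{Zisgood}.

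The argument has no conceptual content — the only things to be careful about are that every estimate is made in the safe direction (replacing $\lceil 20\epsilon\Delta\rceil$ by its lower bound $20\epsilon\Delta$, and confirming the bracketed expression $\tfrac{311}{360}-\tfrac{89m}{2880\epsilon\Delta}$ stays positive so that multiplying through by $m$ preserves the inequality), and that the ratio $m/(\epsilon\Delta)$ lies in a bounded interval independent of $\Delta$, so the quadratic correction term is negligible. The value $\epsilon=\tfrac{1}{8001}$ was chosen exactly so that this arithmetic closes with a little room to spare; there is no genuine obstacle.
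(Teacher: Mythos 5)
Your overall strategy -- clear the ceiling in the safe direction, normalise by setting $t=m/(\epsilon\Delta)\in[\tfrac32,\tfrac{200}{9}]$, and verify a one-variable quadratic inequality -- is sound and is essentially the paper's argument in disguise (the paper reduces to showing $\tfrac{89}{144}t^2-\tfrac{133}{9}t+20<0$ on this interval, which is exactly $20$ times your normalised inequality). However, there is a genuine error in your normalisation that invalidates the final step. Dividing $m\cdot\frac{89m}{2880\epsilon\Delta}=\frac{89t^2\epsilon\Delta}{2880}$ by $\epsilon\Delta$ gives quadratic coefficient $\tfrac{89}{2880}$, not $\tfrac{89\epsilon}{2880}$; the correct normalised inequality is
$$1+\frac{t}{8}+\frac{89}{2880}\,t^{2}\;\le\;\frac{311}{360}\,t .$$
Your spurious factor of $\epsilon$ shrinks the quadratic term by a factor of $8001$, and it is only because of this that your crude linearisation closes. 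With the correct coefficient, replacing $t^2$ by $\tfrac{200}{9}t$ yields the requirement $1\le t\bigl(\tfrac{266}{360}-\tfrac{89}{2880}\cdot\tfrac{200}{9}\bigr)\approx 0.052\,t$, which fails badly at $t=\tfrac32$ (one would need $t\ge 19.2$). The inequality is in fact quite tight at the lower endpoint: at $t=\tfrac32$ the slack in $\tfrac{266}{360}t-\tfrac{89}{2880}t^2-1$ is only about $0.039$, so no uniform linear upper bound on the quadratic term over the whole interval can work.

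The fix is to verify the quadratic directly rather than linearising it: the function $g(t)=\tfrac{266}{360}t-\tfrac{89}{2880}t^{2}-1$ is concave, and $g(\tfrac32)\approx 0.039>0$ and $g(\tfrac{200}{9})\approx 0.159>0$, so $g\ge 0$ on all of $[\tfrac32,\tfrac{200}{9}]$. This is precisely what the paper does (checking the sign of the equivalent quadratic $f(x)=\tfrac{89}{144}x^{2}-\tfrac{133}{9}x+20$ at both endpoints and invoking convexity). With that replacement your write-up becomes correct; as it stands, the step ``it is enough to check $1\le t(\cdots)$'' rests on the misplaced $\epsilon$ and does not establish the lemma.
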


\begin{proof}
Suppose to the contrary that $$\epsilon \Delta+\frac{|N(v) \cap Z|}{8}  >|N(v) \cap Z| \cdot (\frac{8}{9}-\frac{\frac{89|N(v) \cap Z|}{144}+\frac{\epsilon\Delta}{2}}{\lceil 20 \epsilon \Delta \rceil}).$$
This is equivalent to: 
$$\lceil 20 \epsilon \Delta \rceil(\epsilon \Delta -\frac{55|N(v) \cap Z|}{72}) > -|N(v) \cap Z|(\frac{89|N(v) \cap Z|}{144}+\frac{\epsilon\Delta}{2}).$$
By the definition of a usable set, $\frac{3\epsilon \Delta}{2} \leq |N(v) \cap Z|$, so $\epsilon \Delta -\frac{55|N(v) \cap Z|}{72} <0$.
Hence 
$$20 \epsilon \Delta(\epsilon \Delta -\frac{55|N(v) \cap Z|}{72}) > -|N(v) \cap Z|(\frac{89|N(v) \cap Z|}{144}+\frac{\epsilon\Delta}{2}).$$
I.e.
$$\frac{89|N(v) \cap Z|^2}{144} - \frac{133\epsilon\Delta}{9}|N(v) \cap Z| + 20 (\epsilon \Delta)^2>0.$$
That is, 
$$\frac{89}{144}(\frac{|N(v) \cap Z|}{\epsilon\Delta})^2 - \frac{133}{9}\frac{|N(v) \cap Z|}{\epsilon\Delta} + 20>0.$$
The quadratic polynomial $f(x) = \frac{89}{144}x^2 - \frac{133}{9}x + 20$ satisfies $f(0)>0$, $f(\frac{3}{2})<0$ and $f(\frac{200}{9})<0$, so $f(x)<0$ for every $\frac{3}{2} \leq x \leq \frac{200}{9}$.
Since $\frac{3\epsilon \Delta}{2} \leq |N(v) \cap Z| \leq \frac{200\epsilon \Delta}{9}$ by the definition of usable set, the left hand side of the above inequality is negative, a contradiction.
\end{proof}

Therefore, $E[|S_v|] \geq |N(v) \cap Z| \cdot (\frac{8}{9}-\frac{\frac{89|N(v) \cap Z|}{144}+\frac{\epsilon\Delta}{2}}{\lceil 20 \epsilon \Delta \rceil}) \geq \epsilon \Delta+\frac{|N(v) \cap Z|}{8}$ which completes the proof of Lemma \ref{Zisgood}.

\section{Proof of Theorem \ref{sparsegraphs}} \label{sec:proof_sparsegraphs}

In this section, we consider peaceful colourings of  graphs of maximum degree $\Delta$ and 
maximum codegree $\frac{\sqrt{\Delta}}{\log^8 \Delta}$. We recall that to prove Theorem \ref{sparsegraphs}, we can 
restrict our attention to $\Delta$-regular graphs. We do so. 

We consider an iterative procedure (similar to that found in Chapter 12 of \cite{mr_book}). We  activate  
a $o(\frac{1}{\log \Delta})$ proportion of the vertices in each iteration,  and then assign them a colour not yet assigned to any 
of their neighbours. We continue until we have coloured all but at most $\frac{\Delta}{\log \log \Delta}$ neighbours 
of each vertex. We let $i^*$ be the number of iterations we perform. 

At the end of the process we may  need to uncolour some of the neighbours of each vertex  because of conflicts with adjacent  vertices coloured in the same iteration, but only a $o(\frac{1}{\log \Delta})$ proportion 
which does not significantly change the number of colours appearing exactly once in the neighbouhood.  

We need to focus on four variables:   
    \begin{itemize}
        \item $L^v_i$ is the list of available colours which have not yet been assigned to any neighbour of $v$ at the start of iteration $i$.
        \item $Good^v_i$ is the set of colours which have been assigned to exactly one neighbour of $v$. 
        \item $n^v_i$ is the number of neighbours of $v$ coloured in the $i^{th}$ iteration.
        \item $D^v_i$ is the number of uncoloured neighbours of $v$ at the start of the $i^{th}$ iteration, 
    \end{itemize}
so $D^v_i=\Delta-\sum_{j=1}^{i-1} n^v_j$. 

We compare  this process with an idealized  process performed on a $\Delta$-star.
We activate each vertex with probability $ \alpha$ in each iteration 
and assign  each activated vertex a uniformly random colour. 
We let $L^*_i$ be the list of colours not yet assigned to a leaf of the star  at the start of the $i^{th}$ 
iteration. We define $Good^*_i$, $n^*_i$ and $D^*_i$  analogously.  

Now, the activation procedure does not affect which colours are assigned to each vertex.
We will obtain the same colouring of the vertices if we colour all the vertices we colour all at once. 
So, if $i$ is large enough that we have coloured all but  $ o(\frac{\Delta}{\log \log \Delta})$ vertices of the star then $E[|Good^*_{i+1}|]$ is within $ o(\frac{\Delta}{\log \log \Delta})$ of the expected number of vertices which appear on exactly 
one leaf of the star if we assign them independent uniform colours. 
Now, letting $c=\Delta+\frac{B \Delta}{\log \log \Delta}$.  we observe that  the probability $p$  that in doing so, we  colour a vertex using a colour not used 
on any other vertex satisfies\footnote{The last inequality is a consequence of Taylor's approximation to $e$.}:      $$p= (1-\frac{1}{c})^{\Delta-1} \ge   (1-\frac{1}{c})^{c} \ge  \frac{1}{e}(1-o(\frac{1}{\sqrt{c}})). $$

So, the expected number of such vertices is at least $\frac{\Delta}{e}-\sqrt{\Delta}$.  
We will ensure that for every $v$ and $i \le i^*$,  $|Good^v_i|-|Good^*_i|=o(\frac{\Delta}{\log \log \Delta})$ which allows us to use this observation to prove the theorem. 

To make this approach work, we must show that in each iteration the probability we assign a colour to a neighbour of 
$v$ is essentially the same for every colour, except that used to colour $v$ which will never be assigned to a 
neighbour. In order to do so,  we need to keep track of the set $U^{v,c}_i$ which consists of those vertices $w$ in $N(v)$ which have not been assigned a colour and  for which $c$ is in $L^w_i$. 
That is,
    \begin{itemize}
        \item $U^{v,c}_i = \{w \in N(v): w$ has not been assigned a colour and $c \in L^w_i\}$. 
    \end{itemize}
We need to show that for any $c$ and $c'$ (both not assigned to $v$) we have $|U^{v,c}_i| \approxeq |U^{v,c'}_i|$. 

It is this part of the proof that requires a bound on the codegree, allowing us to show that for every 
$c$, $|U^{v,c}_i|-|U^{v,c'}_{i+1}|$ is highly concentrated around its expected value. This is not true 
in graphs with large codegree. If $G$ is $K_{\Delta,\Delta}$ then every $|U^{v,c}_i|$ is either 0 or the 
number of uncoloured neighbours of $v$, and it  can drop from the latter to the former in one iteration.

This is the broad outline of our approach (We have omitted one important technical wrinkle). Forthwith the details.

\subsection{Postprocessing}

Alon, Krivelevich, and Sudakov \cite{aks} have shown that graphs of maximum degree $\Delta$ and maximum codegree 
$\frac{\Delta}{f}$ are $O(\frac{\Delta}{\log f})$-colourable. Applying their result we obtain that  as long as we choose $\Delta_0$ 
large enough there is a $C'$ such that 
$G$  has a $\lceil \frac{C' \Delta}{\log \Delta} \rceil$-colouring.  
We can apply this result in a postprocessing step. Thus, the main part of our proof is the following.

\begin{theorem}
\label{sparsegraphs2}
     There is a $\Delta_0$ and a  $B$     
     such that for $\Delta \ge \Delta_0$, every graph of  maximum degree $\Delta$ and codegree at most $\frac{\sqrt{\Delta}}{\log^8 \Delta}$ has a proper partial $(\Delta+\lceil \frac{B\Delta}{\log \log  \Delta} \rceil)$-colouring 
      such that every vertex $v$  has  at least $e^{-1}\Delta-\frac{B\Delta}{\log\log\Delta}$
      neighbours coloured with a colour assigned to no other vertex of $N(v)$.   
\end{theorem}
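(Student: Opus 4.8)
The plan is to colour $G$ by a semi-random iterative colouring procedure of the kind developed in Chapter~12 of \cite{mr_book}, tracking the statistics $L^v_i,\Good^v_i,n^v_i,D^v_i$ of the overview together with the auxiliary lists $U^{v,c}_i$, and comparing the outcome, vertex by vertex, with the idealised process on a $\Delta$-star. Fix the colour set to have size $c=\Delta+\lceil B\Delta/\log\log\Delta\rceil$ and an activation rate $\alpha=\alpha(\Delta)$ that is $o(1/\log\Delta)$, say $\alpha=\log^{-2}\Delta$; every vertex begins uncoloured with list $L^v_1=\{1,\dots,c\}$. In iteration $i$ we activate each still-uncoloured vertex independently with probability $\alpha$; give each activated $v$ a uniformly random colour from $L^v_i$; uncolour every activated vertex sharing its colour with an activated neighbour (so the partial colouring stays proper), possibly with a small further equalising uncolouring to keep list sizes on track; and then delete from each $L^w_{i+1}$ the colours retained on $N(w)$. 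We run $i^*$ iterations, where $i^*$ is the first index at which the target trajectory $d_i$ for the $D^v_i$ (which decays like $\Delta e^{-\alpha i}$, up to the $o(1/\log\Delta)$ correction from conflict losses) falls to $\Delta/\log\log\Delta$; since the conflict losses in an iteration are a $o(1/\log\Delta)$ fraction of the vertices coloured in it, the total loss to conflicts is $o(\Delta/\log\Delta)$ and is negligible. On a $\Delta$-star distinct leaves' activations never interact, so the idealised process is equivalent to colouring a uniformly random subset of the leaves all at once.

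The core is a family of invariants which, for every vertex $v$ and every $i\le i^*$, I would maintain by one application of the Local Lemma (Lemma~\ref{lll}) per iteration: (i) $D^v_i$ and $|L^v_i|$ stay within a lower-order error of deterministic target sequences $d_i,\ell_i$, with $|L^v_i|$ within $o(\Delta/\log\log\Delta)$ of $|L^*_i|$ — here $\ell_i$ never drops below roughly $\Delta/(2e)$, so no list gets dangerously short; (ii) \emph{equitability}: for all colours $c,c'$, $|U^{v,c}_i|$ and $|U^{v,c'}_i|$ differ by $o(\Delta/\log\log\Delta)$; and (iii) $|\Good^v_i|$ and $|\Good^*_i|$ differ by $o(\Delta/\log\log\Delta)$. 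Assuming the invariants at step $i$, the event that $v$'s invariant fails at step $i+1$ is determined by the random choices in iteration $i$ within distance two of $v$, hence is mutually independent of all but a polynomially bounded set of the others, and (after a union bound over the at most $c^2$ colour pairs) has probability below $\Delta^{-5}$ by a concentration inequality; so the Local Lemma carries all the invariants to step $i+1$. Equitability is exactly what forces the probability that a given neighbour of $v$ is assigned any fixed colour (other than one already on $v$) to be essentially $1/|L^v_{i+1}|$, uniformly in the colour — the property the overview isolates as what makes the comparison with the uniform star process legitimate.

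The step I expect to be the main obstacle, and the one place the codegree hypothesis is used, is the concentration needed for invariant (ii): that the one-step drop $|U^{v,c}_i|-|U^{v,c}_{i+1}|$ — the number of $w\in N(v)$ that in iteration $i$ either get coloured or lose $c$ from $L^w$ — is tightly concentrated about its mean. Its mean equals $|U^{v,c}_i|$ times (the probability such a $w$ is coloured plus the probability $c$ is assigned somewhere in $N(w)$), which by the equitability invariant at step $i$ is, up to lower-order terms, independent of $c$, so the invariant reproduces itself in expectation. For the deviation one writes the drop as a sum of indicators over $w\in N(v)$: the indicator for $w$ depends only on the colours chosen in $\{w\}\cup N(w)$, and any single vertex lies in $\{w\}\cup N(w)$ for at most $1+(\text{codegree})$ of the $w\in N(v)$; hence changing one vertex's activation-and-colour alters the sum by at most $1+(\text{codegree})$, and certifying the sum is at least $s$ needs the choices at $O(s)$ vertices, so Corollary~\ref{permutation} (or Azuma's inequality, Theorem~\ref{azuma}) applies with Lipschitz constant $O(\text{codegree})=O(\sqrt\Delta/\log^{8}\Delta)$ and gives a deviation that is $o(\Delta/\log^{5}\Delta)$, comfortably inside the per-iteration budget and, summed over all (polylogarithmically many) iterations, still $o(\Delta/\log\log\Delta)$. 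This is exactly the place where large codegree would be fatal: in $K_{\Delta,\Delta}$ a single colour choice removes $c$ from $\Delta$ lists at once and $|U^{v,c}_i|$ is never concentrated, as the overview notes.

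Finally, after iteration $i^*$ we have a proper partial $c$-colouring, and for each $v$ we must lower bound $|\Good^v_{i^*+1}|$, which by invariant (iii) is at least $|\Good^*_{i^*+1}|-o(\Delta/\log\log\Delta)$. In the idealised star process, by step $i^*+1$ all but $O(\Delta/\log\log\Delta)$ leaves are coloured, and — activations being irrelevant there — the colours they received are independent and essentially uniform; so, exactly as in the overview with $p=(1-1/c)^{\Delta-1}\ge e^{-1}(1-o(1/\sqrt c))$, $\E[|\Good^*_{i^*+1}|]\ge \Delta/e-O(\Delta/\log\log\Delta)$, and one more concentration step (Corollary~\ref{permutation} or Theorem~\ref{azuma}, with Lipschitz constant $O(1)$) makes $|\Good^*_{i^*+1}|$ itself this large with the tiny failure probability the Local Lemma needs. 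Combining, every vertex $v$ ends with at least $e^{-1}\Delta-O(\Delta/\log\log\Delta)$ neighbours coloured with a colour on no other neighbour of $v$; choosing $B$ large enough to absorb all the accumulated $O(\cdot)$ terms — including the equalising uncolourings and the vertices left uncoloured — yields the claimed partial $(\Delta+\lceil B\Delta/\log\log\Delta\rceil)$-colouring. The single technical wrinkle flagged in the overview — keeping the lists and uncoloured-degrees of \emph{all} neighbours of a vertex on their trajectories simultaneously, which we enforce by freezing any vertex that drifts off course, an event the Local Lemma rules out — is absorbed into the verification of the invariants.
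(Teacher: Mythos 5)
Your proposal follows essentially the same route as the paper: the same quasi-random iterative procedure with activation rate $\alpha=\log^{-2}\Delta$, the same comparison with the idealised $\Delta$-star, the same invariants ($D^v_i$, list sizes, equitability of the $|U^{v,c}_i|$, and $|\Good^v_i|$ tracking its target), the codegree hypothesis used exactly where the paper uses it (concentration of the one-step drop of $|U^{v,c}_i|$ with Lipschitz constant the codegree), and a per-iteration application of the Local Lemma. The only cosmetic differences are procedural: the paper removes conflicting vertices only at the very end (tracking them via $Bad^v_i$) rather than uncolouring per iteration, and its ``technical wrinkle'' is realised as per-colour equalising coin flips followed by a uniform truncation so that every list has exactly the common size $l'_i$, rather than your vaguer ``equalising uncolouring/freezing''; both serve the same purpose and neither difference affects correctness.
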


We  set $C=3B$ and apply Theorem \ref{sparsegraphs2} to $G$.
We uncolour those vertices coloured with colours in $\{1,2,...,\lceil \frac{C' \Delta}{\log \Delta} \rceil\} \cup
\{\Delta+2,..., \Delta+\lceil \frac{B}{\log \log \Delta} \rceil\}$.
We use \cite{aks} to colour all uncoloured vertices using $\{1,2,...,\lceil \frac{C' \Delta}{\log \Delta} \rceil\}$. 
We note that for every vertex
$v$, there are at least $e^{-1}\Delta-\frac{2B\Delta}{\log\log\Delta}-\lceil \frac{C' \Delta}{\log \Delta} \rceil$
 neighbours of $v$ coloured with a colour assigned to no other vertex of $N(v)$.  
The proof of Theorem \ref{sparsegraphs} is complete.  

So it remains to prove Theorem \ref{sparsegraphs2}.

\subsection{Analyzing The Idealized Process}

We set $\alpha=\frac{1}{\log^2\Delta}$ and $i^*=\alpha^{-1} \log \log \Delta$. 
We consider our idealized random process on a star with $\Delta$ leaves. 

We set $g_1=0$ and 
$l_1=\Delta+\lceil \frac{B \Delta}{\log  \log \Delta} \rceil$.
We define $D_1=\Delta$, and 
$n_1= \alpha D_1$. 
For $i \geq 1$, we set :
\begin{itemize}
\item$l_{i+1}=l_i(1-\frac{1}{l_1})^{n_i}$,
\item $g_{i+1}=g_i(1-\frac{1}{l_1})^{n_i}+\frac{n_il_i}{l_1}$,
\item $D_{i+1}=D_i-n_i$,
\item $n_{i+1}= \alpha D_{i+1}$. 
\end{itemize}

We note $\sum_i n_i \le D_1$, $l_i \ge l_1(1-\frac{1}{l_1})^{\sum_i n_i} \ge l_1(1-\frac{\sum_i n_i}{l_1-1}) \ge
l_1-D_1 - \frac{D_1}{l_1-1} \ge \frac{0.9B\Delta}{\log \log \Delta}$. 
We prove:

\begin{lemma} \label{idealized}
    With probability $1-o(\Delta^{-4})$
    for every $i$ from  $1$ to $i^*+1$ we have:
    \begin{enumerate}
       \item[(I)] $|D^*_i-D_i| \le i\Delta^{2/3}$.
        \item[(II)] $|n^*_i-n_i| \le i\Delta^{2/3}$.
      \item[(III)] $\Big||L^*_i|-l_i\Big| \le i\Delta^{3/4}$.
       \item[(IV)] $\Big||Good^*_i|-g_i\Big| \le \frac{6i\Delta}{\log^4 \Delta}$.
        \end{enumerate}
\end{lemma}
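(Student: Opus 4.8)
\textbf{Proof proposal for Lemma \ref{idealized}.}

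The plan is to prove all four bounds simultaneously by induction on $i$, coupling the four idealized random variables to their deterministic proxies $D_i, n_i, l_i, g_i$ at each step via concentration inequalities and then applying a union bound over the $O(\log\log\Delta)$ iterations and the $\le\Delta$-many colours. At iteration $i$, condition on the entire history of the process through iteration $i-1$, and assume (as the induction hypothesis) that (I)--(IV) hold with $i$ replaced by $i-1$; I want to show each of (I)--(IV) continues to hold at step $i$, off an exceptional event of probability $o(\Delta^{-5})$ per variable per iteration, so the total failure probability is $o(\Delta^{-4})$.

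First I would handle $n^*_i$ and $D^*_i$. Since $D^*_i=\Delta-\sum_{j<i}n^*_j$ is determined by the history, (I) is immediate from (II) applied at steps $1,\dots,i-1$ together with the recursion $D_i=D_1-\sum_{j<i}n_j$. For (II), $n^*_i$ is a sum of $D^*_i$ independent Bernoulli$(\alpha)$ indicators, so $\E[n^*_i\mid\text{history}]=\alpha D^*_i$, which by (I) is within $\alpha\cdot(i-1)\Delta^{2/3}\le\Delta^{2/3}/2$ of $\alpha D_i=n_i$; a Chernoff bound (Theorem \ref{concentrationtheorem} applied with the trivial ``choose-an-element'' encoding, or just the classical Chernoff bound) gives $|n^*_i-\alpha D^*_i|\le\Delta^{2/3}/2$ with probability $1-o(\Delta^{-5})$, since the deviation $\Delta^{2/3}/2$ is polynomially larger than $\sqrt{\Delta}$. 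Adding the two estimates gives $|n^*_i-n_i|\le i\Delta^{2/3}$; note the ``budget'' $i\Delta^{2/3}$ is chosen precisely so the accumulated drift plus one fresh fluctuation stays within bounds.

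Next I would handle $|L^*_i|$ and $|Good^*_i|$, which is where the real work is. A colour $c$ survives to $L^*_{i+1}$ iff it was in $L^*_i$ and was not chosen by any of the $n^*_i$ vertices activated in iteration $i$; given the history and given $n^*_i$, each such vertex picks $c$ with probability $1/l_1$ independently, so $\Pee(c\in L^*_{i+1}\mid\text{history})=\mathbf 1[c\in L^*_i]\,(1-1/l_1)^{n^*_i}$, hence $\E[\,|L^*_{i+1}|\mid\text{history}]=|L^*_i|(1-1/l_1)^{n^*_i}$, which I compare to $l_{i+1}=l_i(1-1/l_1)^{n_i}$ using (II) and (III) at step $i$ (the exponent differs by $\le i\Delta^{2/3}$, and since $1-1/l_1\ge 1-O(\log\log\Delta/\Delta)$ this perturbs the factor by a $1\pm O(\Delta^{-1/3})$ multiplicative amount, costing $O(\Delta^{2/3})$ additively — well inside the $\Delta^{3/4}$ budget). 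For concentration of $|L^*_{i+1}|$ itself around its conditional mean I would use Azuma's inequality (Theorem \ref{azuma}) with the $\le\Delta$ trials being the colour choices of the activated vertices: changing one vertex's colour changes $|L^*_{i+1}|$ by at most $1$, so $\sum c_j^2\le\Delta$ and a deviation of, say, $\tfrac12\Delta^{3/4}$ has probability $2e^{-\Omega(\Delta^{1/2})}=o(\Delta^{-5})$. Finally, $Good^*_{i+1}$ is governed by the two-term recursion: a colour stays ``good'' if it was good and avoided by all iteration-$i$ choices (contributing the $g_i(1-1/l_1)^{n_i}$ term, analyzed exactly as for $L^*$), and a colour newly becomes good if exactly one iteration-$i$ vertex picks it from among those with it on their list — in the star, every uncoloured leaf has the same list $L^*_i$, so the expected number of such colours is $n^*_i|L^*_i|(1/l_1)(1-1/l_1)^{n^*_i-1}$, comparable to $n_i l_i/l_1$ up to the cruder $\Delta/\log^4\Delta$ error (this term is where the weaker $\Delta/\log^4\Delta$ budget in (IV) is needed, since the $(1-1/l_1)^{n^*_i-1}$ factor and the $|L^*_i|\sim\Delta$ factor together amplify the $\Delta^{3/4}$-scale errors from (II)--(III)). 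Azuma again, with bounded differences $O(1)$ over $\le\Delta$ trials, controls the fluctuation of $|Good^*_{i+1}|$ around its conditional mean at scale $\Delta/\log^4\Delta$.

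The main obstacle is the bookkeeping in (IV): unlike $|L^*|$, the quantity $|Good^*|$ depends on the list $|L^*_i|$ multiplicatively through the ``new good colours'' term, so errors of size $\Delta^{3/4}$ in (III) get multiplied by $n^*_i/l_1=\Theta(\Delta/\log^2\Delta \cdot \log\log\Delta/\Delta)=\Theta(\log\log\Delta/\log^2\Delta)$ — small, but I must also absorb the discrepancy between $(1-1/l_1)^{n^*_i-1}$ and its deterministic counterpart, and verify that summing all these per-iteration errors over $i\le i^*+1=O(\log\log\Delta/\alpha)=O(\log^2\Delta\log\log\Delta)$ iterations still keeps the total below $6i\Delta/\log^4\Delta$; this forces the per-iteration error to be $O(\Delta/\log^4\Delta)$, which is exactly what the Azuma bound and the propagation estimates deliver with the stated $\alpha$ and $l_1$. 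Everything else is a routine union bound: $O(\log^2\Delta\log\log\Delta)$ iterations times $O(\Delta)$ colours times $O(\Delta)$ vertices in the ambient application, each exceptional event of probability $o(\Delta^{-5})$, gives total failure $o(\Delta^{-4})$ as claimed.
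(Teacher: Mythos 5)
Your overall strategy (induct over iterations, compare conditional expectations to the deterministic recursions for $l_i$ and $g_i$, concentrate with Azuma, union bound over the $O(\log^2\Delta\log\log\Delta)$ iterations) is the paper's, and your treatment of (III) and (IV) — including the observation that the ``newly good'' term $n_i l_i/l_1$ and the two-collision correction of order $\Delta/\log^4\Delta$ are what force the weaker budget in (IV) — matches the actual proof. However, your accounting for (I) and (II) has a genuine gap. You derive (I) at step $i$ by writing $D^*_i-D_i=-\sum_{j<i}(n^*_j-n_j)$ and summing the bounds from (II), which gives $\sum_{j<i}j\Delta^{2/3}=\binom{i}{2}\Delta^{2/3}$, not $i\Delta^{2/3}$; for $i$ up to $i^*=\alpha^{-1}\log\log\Delta=\log^2\Delta\log\log\Delta$ this overshoots the stated budget by a factor of order $i$. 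Relatedly, your claim that $\alpha(i-1)\Delta^{2/3}\le\Delta^{2/3}/2$ is false once $i>1+\tfrac12\log^2\Delta$, i.e.\ for all but an initial sliver of the iterations. Since the weakened (I) then feeds back into (II) (whose drift term is $\alpha|D^*_i-D_i|$), the induction with the stated budgets does not close, and the errors compound multiplicatively across the $i^*$ rounds.

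The repair is the step the paper actually takes: do not pass through the sum of the $n^*_j$ at all, but use the one-step identity $\E[D^*_{i+1}\mid\text{history}]=(1-\alpha)D^*_i$ together with $D_{i+1}=(1-\alpha)D_i$, so that the accumulated error \emph{contracts}: $|\E[D^*_{i+1}]-D_{i+1}|=(1-\alpha)|D^*_i-D_i|\le(1-\alpha)i\Delta^{2/3}$, leaving a fresh fluctuation allowance of a full $\Delta^{2/3}$ for Azuma and giving $e_{i+1}\le(1-\alpha)e_i+\Delta^{2/3}\le(i+1)\Delta^{2/3}$. The point you miss is that $n^*_i-n_i$ and $D^*_i-D_i$ are correlated with the right sign ($n^*_i\approx\alpha D^*_i$), so the triangle inequality over the history throws away exactly the cancellation that keeps the error linear in $i$. (If you are willing to restate the lemma with budget $i^2\Delta^{2/3}$ your argument survives and still suffices for the downstream application, but it does not prove the lemma as stated.)
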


\begin{proof}
Clearly, (I), (III) and (IV) hold for $i=1$.
We prove for each $i$ that given that (I),(III) and (IV) hold for $i$, with probability $o(\Delta^{-5})$ (II) holds for $i$.
The expectation of $n^*_i$ is $\alpha D^*_{i}$. So
$E[n^*_i]-n_{i}=\alpha(D^*_i-D_i)$. Given (I) holds for 
$i$,  
$$|E[n^*_{i}]-n_{i}| \le  \alpha i \Delta^{2/3}.$$
Since $n^*_i$ is determined by the independent activation of the at most $\Delta$ neighbours of $v$
and each can change $n^*_i$ by at most one, applying Azuma's inequality  completes the first part of the proof.  

We then prove that given (I), (II), (III) and (IV) hold for $i$, with probability $o(\Delta^{-5})$ (I), (III) and (IV) hold for $i+1$. 

The expectation of $D^*_{i+1}$ is $(1-\alpha)|D^*_{i}|$. So
given (I) holds for $i$,  
$$|E[D^*_{i+1}]-D_{i}| \le  \alpha i \Delta^{2/3}.$$

Now, $E[|L^*_{i+1}|]=|L^*_{i}|(1-\frac{1}{l_1})^{n^*_i}$.
Letting $l'_{i+1}=|L^*_{i}|(1-\frac{1}{l_1})^{n_i}$,
we see that given that (III) holds for $i$, $|l'_{i+1}-l_{i+1}| \le  i\Delta^{3/4}$.
Furthermore, given that (II) holds for $i$, since $l_1 \ge l'_{i+1}$: 
$$\Big|E[|L^*_{i+1}|]-l'_{i+1}\Big| = \Big|l'_{i+1}(1-(1-\frac{1}{l_1})^{n_i^*-n_i})\Big| \le \Big|l'_{i+1} (1-(1-\frac{1}{l_1})^{-i\Delta^{2/3}})\Big| \le 2i\Delta^{2/3}.$$
So, 
$$|E[L^*_{i+1}]-l_{i+1}| \le \frac{(2i+1)\Delta^{3/4}}{2}.$$

The  expectation of $|Good^*_{i+1}|$ lies between   
$$|Good^*_i|(1-\frac{1}{l_1})^{n^*_i}+\frac{n^*_i|L^*_i|}{l_1} \text{  and  } |Good^*_i|(1-\frac{1}{l_1})^{n_i^*}+\frac{n_i^*|L^*_i|}{l_1}  -\frac{1.1(n^*_i)(n^*_i -1)|L^*_i|}{(l_1)^2}.$$

Now, since (II) holds for $i$, $n^*_i \le  2 \alpha \Delta$, so since $|L^*_i| \le l_1$,
for large enough $\Delta$, 
$$\frac{1.1(n^*_i)(n^*_i-1)|L^*_i|}{(l_1)^2} \le \frac{4.4\Delta}{\log^4\Delta}.$$

Now, since (II) and (III) hold for $i$, 
$$|\frac{n^*_i|L^*_i|}{l_1}-\frac{n_il_i}{l_1}| \le \frac{i\Delta^{3/4}(n_i+l_i+ i\Delta^{2/3})}{l_1}\le 3i\Delta^{3/4}.$$

Letting $g'_{i+1}=|Good^*_{i}|(1-\frac{1}{l_1})^{n_i} + \frac{n_il_i}{l_1}$,
we see that given that (III) holds for $i$, $$|g'_{i+1}-g_{i+1}| \leq (1-\frac{1}{l_1})^{n_i}|Good^*_i-g_i| \le \frac{6 i\Delta}{\log^4 \Delta}.$$
Furthermore, since $l_1 \ge g'_{i+1}$: 
$$\Big|(|Good^*_{i}|(1-\frac{1}{l_1})^{n^*_i} +\frac{n^*_i|L^*_i|}{l_1})-g'_{i+1}\Big| \le \Big|g'_{i+1} (1-(1-\frac{1}{l_1})^{-i\Delta^{2/3} })\Big| + \Big|\frac{n^*_i|L^*_i|}{l_1} - \frac{n_il_i}{l_1}\Big| \le 2i\Delta^{2/3} + 3i\Delta^{3/4}.$$
So, 
$$\Big|E[|Good^*_{i+1}|]-g_{i+1}\Big| \leq \frac{1.1(n^*_i)(n^*_i-1)|L^*_i|}{(l_1)^2} + (2i\Delta^{2/3} + 3i\Delta^{3/4}) + \frac{6 i\Delta}{\log^4 \Delta} \le \frac{(6i+5)\Delta}{\log^4\Delta}.$$ 
Since all our variables depend on the activation and colour assignment choices to the neighbours of $v$, and each activation and colour assignment choice can affect 
a variable by at most one, three applications of Azuma's Inequality, yield the second part of the proof. 
\end{proof}

We note that (IV) of the lemma implies that $\Big|E[|Good_i^*|]-g_i\Big| \le \frac{6i\Delta}{\log^4 \Delta}+ o(\frac{\Delta}{\Delta^4}) \le \frac{7i\Delta}{\log^4 \Delta}$ since $\Big||Good_i^*|-g_i\Big| \leq \Delta$.

\subsection{The Iterative Procedure}

In this section we present the details of the  quasi-random iterative process we use to complete the 
conflict-free colouring. 
Note that we can assume the graph $G$ is $\Delta$-regular since we can embed every graph of maximum degree at most $\Delta$ into a $\Delta$-regular graph.

We have already defined  $L^v_i$, $Good^v_i$, $U^{v,c}_i$,$D^v_i$, $n^v_i$, 
$g_i,m_i,l_i,D_i$ and $n_i$.  
We shall also keep track of 
    \begin{itemize}
        \item $Bad^v_i$ which is the set of vertices in $N(v)$ contained in a monochromatic edge at the start of the $i$th iteration. 
    \end{itemize}
This counts vertices of $G$ that we have to remove at the end of the process in order to obtain a proper partial colouring.

We will show\footnote{ Lemma \ref{mainlemma} (E) below}  that with positive probability we can ensure 
that for every $v$ and $i$, $|Good^v_i-g_i|$ is $o(\frac{\Delta}{\log \log \Delta})$. 
This implies that $|Good^v_{i^*+1}-e^{-1}\Delta|=o(\frac{\Delta}{\log \log \Delta})$ by Lemma \ref{idealized}. 
We shall also show\footnote{ Lemma \ref{mainlemma} (B) below} that $Bad^v_{i^*+1}=o(\frac{\Delta}{\log \log \Delta})$ for every $v$. 
These two facts complete the proof of Theorem \ref{sparsegraphs2} since we can take the colouring at the start of the $(i^*+1)$-th iteration and removing the colours on the vertices in $\bigcup_{v \in V(G)}Bad^v_{i^*+1}$.  

We note that $L^v_1$ is the full set of colours and has size $\Delta + \lceil \frac{B\Delta}{\log\log\Delta} \rceil$ while $Good^v_1$ and  $Bad^v_1$ are all empty.
Both  $U^{v,c}_1$ and $D^v_i$ are  the neighbourhood of $v$ and  have  size $\Delta$. 

In each of $i^*$ iterations, we activate each vertex to which we have not yet assigned 
a colour with probability $\alpha$. 
We then assign each  activated $v$ a colour  uniformly chosen from $L^v_i$.

The  technical wrinkle mentioned earlier is that we now remove some colours from the lists to 
make the procedure easier to analyze. The random process above may remove 
some colours $c$ from $L^v_i$ with a higher probability than others because $v$ has more neighbours
with the colour $c$ on their list. So, for every vertex $v$ and colour $c$, we will perform 
an equalizing coin flip to ensure that every colour is removed from every list with the
same probability. Then,  we remove a uniformly chosen random subset of each $L^v_i$
so that every $L^v_{i+1}$ has the same size.  

We will show that we can do this so that for every $v$,  $Good^v_i$ stays near 
$g_i$, and $L^v_i$ stays near $l_i$. Much of the argument 
is similar to that given in the last section, determining expected values and showing Random variables are concetrated around them with high probability.  We will also need to apply the Lov\'{a}sz Local 
Lemma to show that we can ensure appropriate behaviour in all neighbourhoods. However this is not much more 
work because we know the probability of the bad events are tiny.

One key difference is that we need to show that the probability two different colours are assigned to a neighbour  of $v$ in an iteration  are very close to equal. 
It is for this reason that we track $U^{v,c}_i$ for every $v$ and $c$. The second key difference is that we need to bound $|Bad^v_i|$. 

Letting  $l'_1=l_1$ and $l'_i=l_i- \lceil \frac{i \Delta}{\log^5 \Delta} \rceil$, we  will show: 

\begin{lemma}
For large enough $\Delta$, we can carry out our procedure such that the following hold
for every $i$ between  $1$ and $i^*+1$. 
\label{mainlemma} 
\begin{enumerate}
\item[(A)] for every vertex $v$, $D_i-\Delta^{2/3} \le D^v_i \le D_i+\Delta^{2/3}$.
\item[(B)] for every vertex $v$, at most $\frac{\Delta}{\log^3 \Delta}$ neighbours of $v$ assigned a colour in the iteration $i-1$ are in a monochromatic edge with another vertex assigned the same colour in the iteration $i-1$. Hence
$Bad_{i^*}^v \le \frac{i^*\Delta}{\log^3 \Delta}=o(\frac{\Delta}{\log \log \Delta})$ 
\item[(C)] for every vertex $v$ and colours $c,c'$ not assigned to $v$: 
$\Big||U^{v,c}_i|-|U^{v,c'}_i|\Big| \le \frac{\Delta}{\log^4 \Delta}$.
\item[(D)] for every vertex $v$, $|L^v_i|=l'_i$.
\item[(E)] for every vertex $v$, $|Good^v_i| \geq g_i- \frac{i \Delta}{\log^{7/2} \Delta}$. 
\end{enumerate}
\end{lemma}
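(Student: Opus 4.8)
The plan is to prove the five statements (A)--(E) simultaneously by induction on $i$, maintaining all of them as a joint invariant and, at each step, using the Lov\'asz Local Lemma (Lemma \ref{lll}) to select the equalizing coin flips and the list-trimming so that the invariants survive. For $i=1$ everything is immediate: all lists are full and equal, $\Good^v_1$ and $\Bad^v_1$ are empty, $D^v_1=\Delta=D_1$, and $|U^{v,c}_1|=\Delta$ for every $c$. For the inductive step, assume (A)--(E) hold for $i\le i^*$; I would run one iteration of the quasi-random procedure (activate uncoloured vertices with probability $\alpha$, give each activated $v$ a uniform colour from $L^v_i$, then perform the equalizing coin flips colour-by-colour and trim each $L^v_i$ down to the common size $l'_{i+1}$), and show that each of (A)--(E) fails for a fixed $v$ (and, for (C), a fixed pair $c,c'$) with probability $o(\Delta^{-5})$, with the relevant bad event depending only on activation/colour/coin-flip choices within bounded distance of $v$; then $4pd<1$ in Lemma \ref{lll} and we are done.

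The individual concentration estimates follow the template already used for the idealized process in the proof of Lemma \ref{idealized}, but now carried out on $G$ rather than a star. For (A): $D^v_{i+1}=D^v_i-n^v_i$, and $E[n^v_i]=\alpha D^v_i$ is within $\alpha\Delta^{2/3}$ of $n_i$ by the inductive (A), and $n^v_i$ changes by at most $1$ per activation choice, so Azuma (Theorem \ref{azuma}) gives the $\Delta^{2/3}$ window. For (D), the trimming is deterministic once we know the list sizes are at least $l'_{i+1}$, which is where the lower bound $l_i\ge \frac{0.9B\Delta}{\log\log\Delta}$ and the inductive control on $|L^v_i|$ get used; the equalizing flips guarantee every colour leaves every list with exactly the same probability, which is what lets us hit the common target size. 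For (E), I would track $|\Good^v_{i+1}|$ as (colours that were good and survived) plus (colours newly assigned to exactly one neighbour), mirroring the recursion defining $g_{i+1}$; the inductive (E), (A) and (D) control the expectation up to an additive $O(\Delta/\log^4\Delta)$ term, Azuma controls the deviation, and the slack between the error accumulation rate $\frac{i\Delta}{\log^{7/2}\Delta}$ in (E) and the coarser $\frac{6i\Delta}{\log^4\Delta}$ of the idealized process is exactly what the equalizing step is designed to absorb.

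The main obstacle is (C): showing $||U^{v,c}_i|-|U^{v,c'}_i||$ stays $O(\Delta/\log^4\Delta)$. A vertex $w\in N(v)$ leaves $U^{v,c}_i$ either by being assigned a colour this iteration or by having $c$ deleted from $L^w_i$, and the second mechanism is where codegree enters: the number of $w\in N(v)$ that lose $c$ from their list in an iteration depends on how the neighbourhoods $N(w)$ for $w\in N(v)$ overlap, and a large codegree (as in $K_{\Delta,\Delta}$) lets $|U^{v,c}_i|$ collapse in a single step. With codegree at most $\frac{\sqrt\Delta}{\log^8\Delta}$, one shows $E[|U^{v,c}_{i+1}|]$ is essentially the same function of the inductive data for every $c$ not coloured on $v$ (the equalizing flips handle the first-order discrepancy, and the codegree bound makes the second-order dependence negligible), and then the change in $|U^{v,c}_i|$ per elementary random choice is $O(\mathrm{codegree})=o(\sqrt\Delta)$, so an Azuma bound with these increments gives deviation $o(\Delta/\log^5\Delta)\ll \Delta/\log^4\Delta$ with probability $o(\Delta^{-5})$. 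For (B), each activated $w\in N(v)$ is in a same-iteration monochromatic edge with probability roughly $\frac{\alpha D^w_i}{|L^w_i|}=O(\alpha)=O(\log^{-2}\Delta)$, so the expected count of such $w$ is $O(\Delta\log^{-4}\Delta)$, well under $\Delta\log^{-3}\Delta$, and the count changes by $O(1)$ per choice, so Azuma again finishes. Assembling all five bad events, each of probability $o(\Delta^{-5})$ and each depending on only $O(\Delta^{O(1)})$ others, the Local Lemma yields an outcome satisfying (A)--(E) for $i+1$, completing the induction.
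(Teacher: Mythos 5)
Your overall architecture — induction on $i$ maintaining (A)--(E), one iteration of the quasi-random procedure, per-vertex failure probability $o(\Delta^{-5})$, and the Local Lemma to stitch the vertices together — matches the paper, as do the treatments of (A), (D) and (E). But the concentration tool you propose for (B) and (C) does not work, and this is the crux of the lemma. For (C), the variable $|U^{v,c}_i - U^{v,c}_{i+1}|$ is determined by the colour choices of the roughly $\Delta^2$ vertices at distance at most two from $v$, each of which can move it by up to the codegree $\frac{\sqrt{\Delta}}{\log^8\Delta}$. Plugging these worst-case increments into Azuma gives $\sum_i c_i^2 \approx \Delta^2\cdot\frac{\Delta}{\log^{16}\Delta}$, so for a target deviation $t = \Theta(\frac{\Delta}{\log^6\Delta})$ the exponent $\frac{t^2}{2\sum c_i^2}$ is of order $\frac{\log^4\Delta}{\Delta}$, which tends to $0$: the bound is vacuous. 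The paper instead applies Corollary \ref{permutation} (McDiarmid's permutation/certificate inequality) with $c=\frac{\sqrt{\Delta}}{\log^8\Delta}$ and $r=1$; its denominator involves $E[X]+t$ rather than the number of trials, and since $E[|U^{v,c}_i-U^{v,c}_{i+1}|]=O(\frac{\Delta}{\log^2\Delta})$ one gets an exponent of order $\log^6\Delta$, which suffices. Your sentence ``an Azuma bound with these increments gives deviation $o(\Delta/\log^5\Delta)$'' is therefore a genuine gap, not a routine verification.

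The same problem recurs, in milder form, for (B): you assert the count of neighbours of $v$ in same-iteration monochromatic edges ``changes by $O(1)$ per choice,'' but recolouring a single vertex $u$ can toggle membership for every vertex of $N(u)\cap N(v)$ that shares $u$'s new colour, so the worst-case Lipschitz constant is again the codegree, and the relevant trials again number $\Theta(\Delta^2)$. The paper handles this by passing to the truncated variable $Y'_v$ (adding the vertices whose colour appears on more than $\log^2\Delta$ neighbours of $v$, a set of expected size $o(1)$), which caps the effect of any single change at $\lceil\log^2\Delta\rceil$ and is certifiable, so Corollary \ref{permutation} applies with $c=\lceil\log^2\Delta\rceil$, $r=2$. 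You should also make explicit the two-stage structure the paper uses: one first establishes strengthened conditions (C$'$) and (D$'$) \emph{before} the final uniform trimming of the lists to size $l'_{i+1}$, and then shows the trimming step degrades (C$'$) to (C) by at most $\Delta^{3/4}$; without this split the self-correcting contraction by the factor $(1-\alpha)$ in (C) is hard to close.
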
 

 We note that  (A) and our choice of $i^*$ imply that all but $o(\frac{\Delta}{\log \log 
 \Delta})$  neighbours of $v$ have been assigned a colour by the end of iteration $i^*$. 
 Furthermore, (B) implies that $Bad^v_{i^*+1} \leq (i^*+1)\frac{\Delta}{\log^3 \Delta} = o(\frac{\Delta}{\log \log \Delta})$ which combined 
 with (E) for $i=i^*+1$ and our estimate of $g_i$ from the last section proves the theorem. So, it remains to prove Lemma \ref{mainlemma}. 
 
\begin{proof}
We note that (A) to (E) trivially hold at the start of the first iteration. 
So we need only show that  conditioned on  (A)-(E) holding for all $v$ for all  $j \le i$,   
the probability they hold for  all $v$ and $i+1$  is positive.

We do this in two steps, we first show that given the conditioning, with positive probability 
we can ensure that before  making the choices removing colours from the lists so that the $L^v_{i+1}$
all have size $l'_i$, we can ensure (A), (B) and (E) for $i+1$ and the following two conditions hold: 
\begin{enumerate}
\item[(C')] for every $v$  and $c,c'$ not assigned to $v$: 
$\Big||U^{v,c}_i|-|U^{v,c'}_i|\Big| \le \frac{\Delta}{\log^4 \Delta}-\Delta^{3/4}$,
\item[(D')]  for every $v$, $l'_{i+1} \le |L^v_{i,1}| \le l'_{i+1}+2\Delta^{2/3}$, 
\end{enumerate}
where $L^v_{i,1}$ is the list of colours for vertex $v$ right after we perform an equalizing coin flip and right before we remove colours from $L^v_{i,1}$ so that the lists $L^v_{i+1}$ have size $l'_{i+1}$.

In the second step, for each $i$, we choose a uniform random subset of $L^v_{i,1}$ with $|L^v_{i,1}|-l'_{i+1}$ vertices to delete from $L^v_{i,1}$ and obtain $L^v_{i+1}$. 
Since the probability a fixed colour is deleted from $L^v_{i,1}$ is at most $\frac{2\Delta^{2/3}}{|L^v_{i,1}|} \leq \frac{\Delta^{2/3}}{l'_{i+1}}=o(\Delta^{-1/4})$, an application of Lemma \ref{permutation} with $c=r=1$, shows that with positive probability we can make these 
choices so that fewer than $\frac{|U^{v,c}_i|}{2\Delta^{1/4}} \leq \frac{\Delta^{3/4}}{2}$ vertices in any $U^{v,c}_{i}$ choose $c$. 
Since (C') held after the first step, doing so ensures (C) holds at the end of the second. 

So it remains to prove only that we can carry out the first step as claimed. 
We will show that given our conditioning, the probability (A),(B),(C'),(D'), and (E)  hold for  a specific $v$ and $i+1$ 
is $o(\Delta^{-5})$. Since this event depends only on the colours assigned to the first and second neighbourhoods 
of $v$, an application of the Local Lovasz Lemma proves the lemma.

We let $X_v$ be the event that (X) does not hold for  $v$ and $i+1$ for $X \in \{A,B,C',D',E\}$. 
We need only show that given our conditioning,  the probability of each $X_v$ is $o(\Delta^{-5})$. 

We consider first $A_v$. 
Because (A) holds for $i$, and $E[D^v_{i+1}]=(1-\alpha)D^v_i$,
we know that $|E[D^v_{i+1}]-D_{i+1}| \le (1- \alpha)\Delta^{2/3}$. 
We are done by an application of Azuma's Inequality.

Before bounding the probability of the remaining events, we need to to make some preliminary observations.

We let $l^*(v,i)$ be $l_1$ if $v$ is not yet coloured and $l_1-1$ if $v$ is coloured.  
Note that $l^*(v,i)$ equals the number of colours $c$ such that $U^{v,c}_i \neq \emptyset$.
The sum of $|U^{v,c}_i|$ over all colours $c$ is $D^v_il'_i$ so the average size of a $U^{v,c}_i$
is $\frac{D^v_il'_i}{l^*(v,i)}$. 
Since (C) and (D) hold for $v$ and $i$, $l_1-1 \ge \Delta \ge D_i$ and $l'_i \le l_1$, it follows that $$\frac{D^v_il'_i}{l_1}-\frac{\Delta}{\log^4 \Delta}-1 \leq |U^{v,c}_i| \leq \frac{D^v_il'_i}{l_1}+\frac{\Delta}{\log^4 \Delta}+1.$$  
Since (D) holds for all vertices and $i$, letting $q(v,c,i)$ be the probability that the colour $c$ is not assigned to a neighbour of $v$ during iteration $i$, we have for every colour $c$ not assigned to $v$:     

$$q(v,c,i)=(1-\frac{\alpha}{l'_i})^{|U^{v,c}_i|}=  (1-\frac{\alpha}{l'_i})^\frac{D_il'_i}{l_1} (1-\frac{\alpha}{l'_i})^{|U^{v,c}_i|-\frac{D_il'_i}{l_1}}.$$

We apply the following: 

\begin{lemma} \label{estimate_qSc2}
    $$(1-\frac{1}{\Delta})(1-\frac{1}{l_1})^{n_i} \leq (1-\frac{\alpha}{l'_i})^\frac{D_il'_i}{l_1} \leq (1+\frac{1}{\Delta})(1-\frac{1}{l_1})^{n_i}.$$
\end{lemma}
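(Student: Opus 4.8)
The plan is to take logarithms and compare $\frac{D_i l'_i}{l_1}\ln(1-\tfrac{\alpha}{l'_i})$ with $n_i\ln(1-\tfrac1{l_1})$, showing they differ by at most $\tfrac1{\Delta}+O(\Delta^{-1})$ in absolute value, and then exponentiate (using that $e^{\pm x}=1\pm O(x)$ for small $x$, so an additive error of order $\Delta^{-1}$ in the exponent translates to the claimed multiplicative factor $1\pm\tfrac1\Delta$ once $\Delta$ is large). First I would record the exact relation $n_i=\alpha D_i$ (from the recursive definitions in this section), so that the two exponent-candidates become $\tfrac{D_i l'_i}{l_1}\ln(1-\tfrac{\alpha}{l'_i})$ and $\alpha D_i\ln(1-\tfrac1{l_1})$. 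Dividing through by $\alpha D_i$, it suffices to show $\tfrac{l'_i}{\alpha l_1}\ln(1-\tfrac{\alpha}{l'_i})$ is within an additive $O(\Delta^{-2})$ of $\tfrac1{\alpha}\cdot\tfrac{\alpha}{l_1}\cdot\bigl(\tfrac{l'_i}{l'_i}\bigr)\ln(1-\tfrac1{l_1})$ — i.e. after the natural normalisation both quantities are, up to tiny error, $-\tfrac1{l_1}$ times $D_i$.

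Concretely, I would use the two-sided estimate $-t-t^2\le\ln(1-t)\le -t$ valid for $t\in[0,\tfrac12]$, applied once with $t=\tfrac{\alpha}{l'_i}$ and once with $t=\tfrac1{l_1}$; here $\alpha=\tfrac1{\log^2\Delta}$ and $l'_i\ge l_i-\lceil\tfrac{i\Delta}{\log^5\Delta}\rceil\ge\tfrac{0.9B\Delta}{\log\log\Delta}-o(\Delta)$ (from the bound $l_i\ge\tfrac{0.9B\Delta}{\log\log\Delta}$ noted just before Lemma \ref{idealized}, together with $i\le i^*+1$), so both $t$'s are $O\bigl(\tfrac{\log^2\Delta\cdot\log\log\Delta}{\Delta}\bigr)=o(\Delta^{-1/2})$ and certainly below $\tfrac12$. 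Then $\tfrac{D_i l'_i}{l_1}\ln(1-\tfrac{\alpha}{l'_i})=-\tfrac{\alpha D_i}{l_1}\bigl(1+O(\tfrac{\alpha}{l'_i})\bigr)=-\tfrac{n_i}{l_1}+O\bigl(\tfrac{n_i\alpha}{l_1 l'_i}\bigr)$, while $n_i\ln(1-\tfrac1{l_1})=-\tfrac{n_i}{l_1}+O\bigl(\tfrac{n_i}{l_1^2}\bigr)$. Both error terms are $O\bigl(\tfrac{n_i}{l_1 l'_i}\bigr)=O\bigl(\tfrac{\alpha D_i}{l_1 l'_i}\bigr)$, which, since $D_i\le\Delta$ and $l_1,l'_i=\Theta(\Delta/\log\log\Delta)$ and $\alpha=\log^{-2}\Delta$, is $O\bigl(\tfrac{(\log\log\Delta)^2}{\Delta\log^2\Delta}\bigr)=o(\Delta^{-1})$. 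Hence the logarithms of the two sides of the desired inequality differ by $o(\Delta^{-1})$, and exponentiating — using $|e^x-1|\le 2|x|$ for $|x|\le 1$ — gives a multiplicative discrepancy bounded by $1+o(\Delta^{-1})\le 1+\tfrac1\Delta$ for $\Delta$ large, which is exactly the claim.

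I don't anticipate a genuine obstacle here — this is a routine "Taylor-expand and exponentiate" estimate — but the one place to be careful is bookkeeping the lower bound on $l'_i$ uniformly over $i\le i^*+1$, since $l'_i=l_i-\lceil\tfrac{i\Delta}{\log^5\Delta}\rceil$ and one must confirm the subtracted term is negligible next to $l_i\ge\tfrac{0.9B\Delta}{\log\log\Delta}$; this holds because $i\le i^*+1=\alpha^{-1}\log\log\Delta+1=\log^2\Delta\cdot\log\log\Delta+1$, so $\tfrac{i\Delta}{\log^5\Delta}=O\bigl(\tfrac{\Delta\log\log\Delta}{\log^3\Delta}\bigr)=o\bigl(\tfrac{\Delta}{\log\log\Delta}\bigr)$. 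With that in hand every quantity $t$ fed into $\ln(1-t)$ is safely small and all the $O(\cdot)$ terms above are genuinely $o(\Delta^{-1})$, completing the argument.
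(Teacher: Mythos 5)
Your proposal is correct and takes essentially the same route as the paper: both arguments reduce the claim to showing that each side is within a multiplicative $1\pm O(\Delta^{-1})$ of $e^{-n_i/l_1}$, using the standard two-sided Taylor bounds relating $1-t$ (equivalently $\ln(1-t)$) to $e^{-t}$, together with $n_i=\alpha D_i$ and the lower bound $l'_i=\Omega(\Delta/\log\log\Delta)$. The bookkeeping you flag (uniformity of the $l'_i$ lower bound over $i\le i^*+1$) is handled the same way in the paper and poses no issue.
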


\begin{proof}
By Taylor's approximation, for every sufficiently large   $x$, $1-\frac{1}{x} \leq e^{-1/x} \leq 1-\frac{1}{x+1}=(1-\frac{1}{x})\frac{x^2}{x^2-1}$, so $(1-\frac{1}{x^2})e^{-1/x} \leq 1-\frac{1}{x} \leq e^{-1/x}$.
So $$(1-(\frac{\alpha}{l'_i})^2)e^{-\frac{\alpha}{l'_i}} \leq 1-\frac{\alpha}{l'_i} \leq e^{-\frac{\alpha}{l'_i}}.$$
Hence $$e^{-\frac{D_i\alpha}{l_1}} \geq (1-\frac{\alpha}{l'_i})^\frac{D_i l'_i}{l_1} \geq (1-(\frac{\alpha}{l'_i})^2)^\frac{D_il'_i}{l_1}e^{-\frac{D_i\alpha}{l_1}} \geq (1-(\frac{\alpha^2 D_i }{(l'_i)^2}))e^{-\frac{\alpha^2 D_i}{(l_i')^2}}e^{-\frac{D_i\alpha}{l_1}} \geq (1-\frac{2\alpha^2 D_i}{(l_i')^2})e^{-\frac{D_i\alpha}{l_1}}.$$
Since $l_i' \ge l_i -\frac{i\Delta}{\log^5 \Delta}$, $l_i \ge  \frac{B \Delta}{ \log \Delta}$, $D_i \le \Delta$, and $n_i=\alpha D_i$, we have:
$$e^{-\frac{n_i}{l_1}} \geq (1-\frac{\alpha}{l_j'})^\frac{D_il'_i}{l_1} \geq (1-\frac{1}{2\Delta}) e^{-\frac{n_i}{l_1}}.$$
Similarly, since $n_i \le \alpha \Delta$,
$$e^{-\frac{n_j}{l_1}} \geq (1-\frac{1}{l_1})^{n_j} \geq (1-\frac{1}{\ell_1^2})^{n_j}e^{-\frac{n_j}{l_1}} \geq (1-\frac{2n_j}{l_1^2})e^{-\frac{n_j}{l_1}} \geq (1-\frac{1}{2\Delta}) e^{-\frac{n_j}{l_1}}.$$
Therefore, 
$$(1+\frac{1}{\Delta})(1-\frac{1}{l_1})^{n_j} \geq (1+\frac{1}{\Delta})(1-\frac{1}{2\Delta})e^{-\frac{n_j}{l_1}} \geq e^{-\frac{n_j}{l_1}} \geq (1-\frac{\alpha}{l_j'})^\frac{D_il'_i}{l_1} \geq (1-\frac{1}{2\Delta}) e^{-\frac{n_i}{l_1}} \geq (1-\frac{1}{\Delta})(1-\frac{1}{l_1})^{n_j}.$$
\end{proof}

\begin{lemma} \label{estimate_qSc3}
    For every vertex $v$ and colour $c$ which was not assigned to $v$ in a previous iteration, $$|q(v,c,i) - (1-\frac{1}{l_1})^{n_i}| = |(1-\frac{\alpha}{l'_i})^{|U^{v,c}_i|} - (1-\frac{1}{l_1})^{n_i}| \leq \frac{1}{\log^{11/2}\Delta}.$$
\end{lemma}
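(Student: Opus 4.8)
The plan is to reduce the estimate to controlling a single error exponent. Write $T=(1-\frac{1}{l_1})^{n_i}$, $A=(1-\frac{\alpha}{l'_i})^{D_il'_i/l_1}$ and $B=(1-\frac{\alpha}{l'_i})^{E}$, where $E=|U^{v,c}_i|-\frac{D_il'_i}{l_1}$, so that $q(v,c,i)=AB$ by the factorisation recorded just before the statement, and $q(v,c,i)-T=A(B-1)+(A-T)$. Lemma \ref{estimate_qSc2} already gives $|A-T|\le\frac{1}{\Delta}T\le\frac{1}{\Delta}$ and $A\le(1+\frac{1}{\Delta})T\le 2$, so it remains only to show $|B-1|=o(\log^{-11/2}\Delta)$. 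For this it suffices to show $\frac{\alpha}{l'_i}|E|=o(\log^{-11/2}\Delta)$, since then the elementary inequality $|(1-x)^m-1|\le 4|m|x$ (valid whenever $0<x<\frac12$ and $|m|x\le\frac12$, via Bernoulli's inequality when $m\ge0$ and via $-\ln(1-x)\le 2x$ when $m<0$) applies with $x=\frac{\alpha}{l'_i}$ and $m=E$.

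First I would bound $|E|$. The conditioning supplies the estimate $|U^{v,c}_i-\frac{D^v_il'_i}{l_1}|\le\frac{\Delta}{\log^4\Delta}+1$ that was derived from (C) and (D) just above Lemma \ref{estimate_qSc2}, and (A) gives $|D^v_i-D_i|\le\Delta^{2/3}$; since $l'_i\le l_1$, the triangle inequality yields $|E|\le\frac{\Delta}{\log^4\Delta}+1+\Delta^{2/3}\le\frac{2\Delta}{\log^4\Delta}$ for $\Delta$ large.

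Next I would bound $\frac{\alpha}{l'_i}$. Here $\alpha=\log^{-2}\Delta$; from the remarks before Lemma \ref{idealized} we have $l_i\ge\frac{0.9B\Delta}{\log\log\Delta}$, and since $i\le i^*+1$ the subtracted correction $\lceil\frac{i\Delta}{\log^5\Delta}\rceil$ is $o(\frac{\Delta}{\log\log\Delta})$, so $l'_i\ge\frac{0.8B\Delta}{\log\log\Delta}$ for large $\Delta$. Hence $\frac{\alpha}{l'_i}\le\frac{2\log\log\Delta}{B\Delta\log^2\Delta}$, and combining with the bound on $|E|$ gives $\frac{\alpha}{l'_i}|E|\le\frac{4\log\log\Delta}{B\log^6\Delta}$, which is $o(\log^{-11/2}\Delta)$ because $\frac{\log\log\Delta}{\log^{1/2}\Delta}\to 0$. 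Plugging back, $|B-1|\le\frac{16\log\log\Delta}{B\log^6\Delta}$, so $|A(B-1)|\le\frac{32\log\log\Delta}{B\log^6\Delta}$ and $|q(v,c,i)-T|\le\frac{1}{\Delta}+\frac{32\log\log\Delta}{B\log^6\Delta}\le\frac{1}{\log^{11/2}\Delta}$ once $\Delta$ is large enough.

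The one genuinely delicate point, and the step I would double-check, is the lower bound on $l'_i$. The crude bound $l_i\ge l_1-D_1-\frac{D_1}{l_1-1}$, if used only to conclude $l'_i=\Omega(\Delta/\log\Delta)$, would give $\frac{\alpha}{l'_i}|E|=O(\log^{-5}\Delta)$, which is \emph{not} $o(\log^{-11/2}\Delta)$. One must exploit that $l_1=\Delta+\lceil\frac{B\Delta}{\log\log\Delta}\rceil$ to get $l'_i=\Theta(\Delta/\log\log\Delta)$, gaining the extra $\log\Delta/\log\log\Delta$ factor that makes $\frac{\alpha}{l'_i}|E|$ beat $\log^{-11/2}\Delta$ with room to spare. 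Everything else is routine arithmetic.
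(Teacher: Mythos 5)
Your proposal is correct and follows essentially the same route as the paper: factor $q(v,c,i)$ as $(1-\frac{\alpha}{l'_i})^{D_il'_i/l_1}$ times an error factor with exponent $|U^{v,c}_i|-\frac{D_il'_i}{l_1}$, bound that exponent by $\frac{2\Delta}{\log^4\Delta}$ via (A), (C), (D), control the main factor by Lemma \ref{estimate_qSc2}, and finish using $l'_i=\Theta(\Delta/\log\log\Delta)$ and $\alpha=\log^{-2}\Delta$. The only difference is that you spell out the arithmetic the paper compresses into ``the result follows,'' and your closing remark correctly identifies that the bound $l_i\ge\frac{0.9B\Delta}{\log\log\Delta}$ (rather than merely $\Omega(\Delta/\log\Delta)$) is what makes the final estimate beat $\log^{-11/2}\Delta$.
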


\begin{proof}
Recall $$\frac{D^v_il'_i}{l_1}-\frac{\Delta}{\log^4 \Delta}-1 \leq |U^{v,c}_i| \leq \frac{D^v_il'_i}{l_1}+\frac{\Delta}{\log^4 \Delta}+1.$$
Since (A) holds $i$, we have $$-\frac{2\Delta}{\log^4 \Delta} \leq (\frac{D^v_il'_i}{l_1}-\frac{D_il'_i}{l_1})-\frac{\Delta}{\log^4 \Delta}-1 \leq |U^{v,c}_i|-\frac{D_il'_i}{l_1} \leq (\frac{D^v_il'_i}{l_1}-\frac{D_il'_i}{l_1})+\frac{\Delta}{\log^4 \Delta}+1 \leq \frac{2\Delta}{\log^4 \Delta}.$$
By Lemma \ref{estimate_qSc2} and the remarks preceding it we have:
$$(1-\frac{\alpha}{l'_i})^{\frac{2\Delta}{\log^4 \Delta}}(1-\frac{1}{\Delta})(1-\frac{1}{l_1})^{n_i} \le     q(v,c,i)  \le  (1-\frac{\alpha}{l'_i})^{-\frac{2\Delta}{\log^4 \Delta}}  (1+\frac{1}{\Delta})(1-\frac{1}{l_1})^{n_i}.$$
Since $l_i' \ge l_i -\frac{i\Delta}{\log^5 \Delta}$, $l_i \ge  \frac{B \Delta}{\log  \log \Delta}$, and $\alpha=\log^{-2} \Delta$,
the result follows. 
\end{proof}

Let $$p^*_i=\frac{l'_{i+1}+\Delta^{2/3}}{l'_i}.$$
We note that 
    \begin{align*}
        l'_i \cdot q(v,c,i) \geq & l'_i((1-\frac{1}{l_1})^{n_i}-\frac{1}{\log^{11/2} \Delta}) \\
        \ge & (l_i-\frac{i\Delta}{\log^5\Delta})((1-\frac{1}{l_1})^{n_i}-\frac{1}{\log^{11/2} \Delta}) \ge l_{i+1}-\frac{((2i+1)\Delta}{2\log^5 \Delta}>l'_{i+1}+\Delta^{2/3}.
    \end{align*}
So there exists $0<x(v,c,i)<1$ such that $q(v,c,i) \cdot (1-x(v,c,i)) = \frac{l'_{i+1}+\Delta^{2/3}}{l'_i} = p^*_i$.

Now we perform our equalizing coin flips: for every vertex $v$ and colour $c \in L^v_i$, we remove $c$ from $L^v_i$ with probability $x(v,c,i)$.
Note that the coin flips are independent of the activation and colour assignments.
So for every vertex $v$ and colour $c \in L^v_i$, the overall probability that $c$ is retained in $L^v_i$ equals $q(v,c,i) \cdot (1-x(v,c,i)) = p^*_i$.  
Note that 
    \begin{align*}
        p^*_i= \frac{l'_{i+1}+\Delta^{2/3}}{l'_i} > (1-\frac{1}{l_1})^{n_i}-\frac{\Delta}{l'_i\log^5 \Delta} > & (1-\frac{1}{l^2_1})^{n_i}(1-\frac{n_i}{l_1})-\frac{1}{\log^{5}\Delta} \\
        \geq & 1-\frac{n_i}{l_1} - \frac{2n_i}{l_1^2}-\frac{1}{\log^{5}\Delta} >1-\alpha-\frac{1}{\log^{5}\Delta}.
    \end{align*}

Now, the set $L^v_i-L^v_{i,1}$ of colours removed from $L^v_i$ is determined by the choice of the  colours assigned to the vertices of $N(v)$ and the result of the  equalizing coin flips for $v$. 
Each choice or result can change the number of these colours by at most one. 
So since the expected number of colours removed is at most $l_1<2\Delta$, applying Lemma \ref{permutation} with $c=r=1$, we see that with probability $1-o(\Delta^{-5})$, $\Big||L^v_{i,1}|-E[|L^v_{i,1}|]\Big| = \Big||L^v_{i,1}|-p^*_i l_i'\Big| \leq \Delta^{2/3}$, and hence between $l'_{i+1}$ and $l'_{i+1}+2\Delta^{2/3}$. 
That is, the probability that (D') is violated for $v$ is $o(\Delta^{-5})$.

We consider next $B_v$. Lemma \ref{estimate_qSc3} tells us that 
 $$q(v,c,i) \ge (1-\frac{1}{l_1})^{n_i}-\log^{-11/2} \Delta > 1-\alpha -\log^{-11/2} \Delta >1-2\alpha.$$ 
Thus, the probability that some neighbour of a vertex is assigned  a specific colour $c$ is at most $2\alpha$
and the probability that a vertex is in a monochromatic edge is at most $4\alpha^2$.
It  follows that the expectation of the number  $Y_v$ of  vertices in $N(v)$ which are in a monochromatic edge in the $i^{th}$ iteration  is at most  $\frac{4\Delta}{\log^4 \Delta}$. Rather than proving concentration of $Y_v$, we consider $Y'_v$ which consists of the union of $Y_v$ and those vertices in $N(v)$ coloured with a colour appearing on more than $\log^2 \Delta$ neighbours of $v$. Now the expected number of sets of $s$ neighbours of 
$v$ coloured with the same colour is at most $\Delta^s(\frac{\alpha}{l'_i})^s \le (\frac{B \log \log \Delta}{\log^2 \Delta})^s$. Thus, $E[|Y'_v-Y_v|]=o(1)$. 
Furthermore, the size of $Y'_v$ is  determined by the choices of activation and colours assignments of vertices, and each such choice can change $Y'_v$ by at most $\lceil \log^2 \Delta \rceil$. 
Thus we can apply Lemma \ref{permutation} with $r=2$ and $c=\lceil \log^2 \Delta \rceil$ to show that the probability $B_v$ holds is $o(\Delta^{-5})$. 

We turn to $C'_v$. For each $c$ and $v$, the expected size of $U^{v,c}_{i+1}$ at the end of the 
first step is $|U^{v,c}_i|(1-\alpha)p^*_i$. So  since (C) holds for every $v$ for $i$, 
for any $c$ and $c'$, at the end of the first step 
$$\Big|E[|U^{v,c}_{i+1}|]-E[|U^{v,c'}_{i+1}|]\Big| =\Big||U^{v,c}_i|-|U^{v,c'}_i|\Big|(1-\alpha) p^*_i\le \frac{\Delta}{\log^4 \Delta}(1-\alpha).$$
Hence to show that $C'_v$ holds  with probability $o(\Delta^{-5})$ it is enough to show that 
for every colour $c$, $$P\left(\Big||U^{v,c}_{i+1}|-E[|U^{v,c}_{i+1}|]\Big| \ge \frac{\Delta}{3\log^6\Delta}\right)=o(\Delta^{-6}).$$
Or equivalently: $$P\left(\Big||U^{v,c}_i-U^{v,c}_{i+1}|-E[|U^{v,c}_i-U^{v,c}_{i+1}|]\Big| \ge \frac{\Delta}{3\log^6\Delta}\right)=o(\Delta^{-6}).$$

Now, $1-p^*_i$ is less than $2\alpha$,
so $E[|U^{v,c}_i-U^{v,c}_{i+1}|] = |U^{v,c}|(1-(1-\alpha)p^*_i) \le 3\alpha \Delta =\frac{3\Delta}{\log^2 \Delta}$.
Furthermore, $|U^{v,c}_i-U^{v,c}_{i+1}|$ is determined by the equalizing coin flips and the choice of colours assigned to 
neighbours of neighbours of $v$. By our codegree condition, each choice can change this random variable by at 
most $\frac{\sqrt{\Delta}}{\log^8 \Delta}$. So, applying  Lemma \ref{permutation} with $c=\frac{\sqrt{\Delta}}{\log^8 \Delta}$and $r=1$  and $t=\frac{\Delta}{6 \log^6 \Delta}$ we  obtain the desired result.

We turn now to $E_v$. 
Let $Zero^v_i$ be the set of colours not used on $N(v)$ at the start of iteration $i$.
Note that $L^v_i \subseteq Zero^v_i$.
So $|Zero^v_i| \geq |L^v_i| = l_i'$.
Let $Two^v_i$ be the number of colours in $Zero^v_i$ that are assigned to at least two neighbours of $v$ in the $i^{th}$ iteration.
Then
$$E[|Good^v_{i+1}|]=\sum_{c \in Good^v_{i}} q(v,c,i) + \sum_{c \in Zero^v_i}(1-q(v,c,i))- E[Two^v_i].$$
Applying Lemma \ref{estimate_qSc3}, we see that $$E[|Good^v_{i+1}|] \geq |Good^v_i|((1-\frac{1}{l_1})^{n_i} - \frac{1}{\log^{11/2}\Delta}) + |Zero^v_i|(1-(1-\frac{1}{l_1})^{n_i} - \frac{1}{\log^{11/2}\Delta}) - E[Two^v_i].$$
We note that $E[Two^v_i] \le  |Zero^v_i|\frac{\alpha^2 \Delta^2}{(l'_i)^2} \leq |Zero^v_i|(\frac{\alpha \Delta}{\frac{0.9B\Delta}{\log\log\Delta} - \lceil \frac{i\Delta}{\log^5\Delta} \rceil})^2 \leq |Zero^v_i|\frac{(\log\log\Delta)^2}{\log^4\Delta}$.
So $$E[|Good^v_{i+1}|] \geq |Good^v_i|((1-\frac{1}{l_1})^{n_i} - \frac{1}{\log^{11/2}\Delta}) + |Zero^v_i|(1-(1-\frac{1}{l_1})^{n_i} - \frac{1}{\log^{11/2}\Delta}-\frac{(\log\log\Delta)^2}{\log^4\Delta}).$$
Note that $1-(1-\frac{1}{l_1})^{n_i} \geq 1 - e^{-n_i/l_1} \geq 1-(1-\frac{n_i}{l_1}+(\frac{n_i}{l_1})^2) = \frac{n_i}{l_1} - (\frac{n_i}{l_1})^2 \geq \frac{1}{2\log^3\Delta}$.  
So $1-(1-\frac{1}{l_1})^{n_i} - \frac{1}{\log^{11/2}\Delta}-\frac{(\log\log\Delta)^2}{\log^4\Delta} \geq 0$.
Hence, since $|Zero^v_i| \geq l_i'$, $$E[|Good^v_{i+1}|] \geq |Good^v_i|((1-\frac{1}{l_1})^{n_i} - \frac{1}{\log^{11/2}\Delta}) + l_i'(\frac{n_i}{l_1} - (\frac{n_i}{l_1})^2 - \frac{1}{\log^{11/2}\Delta}-\frac{(\log\log\Delta)^2}{\log^4\Delta}).$$
Since (E) holds for $i$,
    \begin{align*}
        & E[|Good^v_{i+1}|] \\
        \geq & (g_i-\frac{i\Delta}{\log^{7/2}\Delta})(1-\frac{1}{l_1})^{n_i} - \frac{\Delta}{\log^{11/2}\Delta} +l_i'(\frac{n_i}{l_1} - (\frac{n_i}{l_1})^2 - \frac{1}{\log^{11/2}\Delta}-\frac{(\log\log\Delta)^2}{\log^4\Delta}) \\
        = & (g_{i+1}-\frac{n_il_i}{l_1}) - \frac{i\Delta}{\log^{7/2}\Delta}(1-\frac{1}{l_1})^{n_i} - \frac{\Delta}{\log^{11/2}\Delta} +l_i'(\frac{n_i}{l_1} - (\frac{n_i}{l_1})^2 - \frac{1}{\log^{11/2}\Delta}-\frac{(\log\log\Delta)^2}{\log^4\Delta}) \\
        \geq & (g_{i+1}-\frac{n_i(l_i-l_i')}{l_1}) - \frac{i\Delta}{\log^{7/2}\Delta} - \frac{\Delta}{\log^{11/2}\Delta} - \frac{(n_1)^2}{l_1} - \frac{1}{\log^{11/2}\Delta}-\frac{(\log\log\Delta)^2}{\log^4\Delta} \\
        \geq & g_{i+1}-\frac{n_i}{l_1}\frac{i\Delta}{\log^5\Delta} - \frac{i\Delta}{\log^{7/2}\Delta} -\frac{2\Delta}{\log^4\Delta} \\
        \geq & g_{i+1} - \frac{(i+0.5)\Delta}{\log^{7/2}\Delta}.
    \end{align*}

Thus the event $E_v$ is contained in the event that $\Big||Good^v_{i+1}|-E[|Good^v_{i+1}|]\Big| > \frac{\Delta}{2\log^{7/2} \Delta}$.
Now $|Good^v_{i+1}|$ is determined by the choices of colour assignments on the at most $\Delta$ neighbours of 
$v$, and each choice can effect the variable by at most one. So, an application of Azuma's inequality shows that 
$P(E_v)=o(\Delta^{-5})$. 
\end{proof}

\bigskip

\bigskip

\noindent{\bf Acknowledgement:} 
This work was partially conducted when the first author visited the Institute of Mathematics of Academia Sinica,  Taiwan. He thanks the Institute of Mathematics of Academia Sinica for its hospitality.

Both authors thank a referee for a helpful report.

\end{document}